\newtheorem{thm}{Theorem}[section]
\newtheorem{lemm}[thm]{Lemma}
\newtheorem{cor}[thm]{Corollary}
\newtheorem{defi}[thm]{Definition}
\newtheorem{prop}[thm]{Proposition}
\newtheorem*{thmA}{Theorem A}
\newtheorem*{thmB}{Theorem B}
\newtheorem*{thmC}{Theorem C}
\newtheorem*{thmD}{Theorem D}
\newtheorem*{thmE}{Theorem E}
\newtheorem*{thmF}{Theorem F}
\theoremstyle{remark}
\newtheorem{rmk}[thm]{Remark}
\theoremstyle{definition}
\newtheorem{exam}{Example}
\numberwithin{equation}{subsection}
\title[The width of embedded circles]{The width of embedded circles}
\author{Lucas Ambrozio, Rafael Montezuma and Roney Santos}
\address{L. Ambrozio: IMPA \\ Rio de Janeiro RJ 22460-320
Brazil}
\email{l.ambrozio@impa.br}
\address{R. Montezuma: Departamento de Matemática, UFC \\ Campus do Pici \\ Fortaleza, CE 60455-760 Brazil}
\email{montezuma@mat.ufc.br}
\address{R. Santos: Universidade de S\~ao Paulo \\ Departamento de Matem\'atica \\ Rua do Matão, São Paulo, SP 05508-900 Brazil}
\email{roneypsantos@ime.usp.br}
\thanks{L.A. was supported by CNPq - Conselho Nacional de Desenvolvimento Cient\'ifico e Tecnol\'ogico (309908/2021-3 - Bolsa PQ) and by FAPERJ - Funda\c{c}\~ao Carlos Chagas Filho de Amparo \`a Pesquisa do Estado do Rio de Janeiro (grant SEI-260003/000534/2023 - BOLSA E-26/200.175/2023 and grant SEI-260003/001527/2023 - APQ1 E-26/210.319/2023)}
\thanks{R.M. was supported by Instituto Serrapilheira grant ``New perspectives of the min-max theory for the area functional" and by CNPq - Conselho Nacional de Desenvolvimento Cient\'ifico e Tecnol\'ogico (311028/2020.9 - Bolsa PQ)}
\thanks{R.S. was supported by Instituto Serrapilheira grant ``New perspectives of the min-max theory for the area functional".}
\begin{document}

\begin{abstract}
	We develop a Morse-Lusternik-Schnirelmann theory for the distance between two points of a smoothly embedded circle in a complete Riemannian manifold. This theory suggests very naturally a definition of width that generalises the classical definition of the width of plane curves. Pairs of points of the circle realising the width bound one or more minimising geodesics that intersect the curve in special configurations. When the circle bounds a totally convex disc, we classify the possible configurations under a further geometric condition. We also investigate properties and characterisations of curves that can be regarded as the Riemannian analogues of plane curves of constant width.
\end{abstract}

\maketitle

\section{Introduction}

\subsection{The width of convex planar curves} \label{introplanar} Let $\Gamma$ be an embedded circle in the Euclidean plane. Given any unit vector $\theta\in \mathbb{R}^2$, the \textit{width of $\Gamma$ in the direction $\theta$} is the smallest distance $w(\theta)$ between two parallel lines that are both orthogonal to $\theta$ and contain $\Gamma$ between them. The maximum of $w(\theta)$ is nothing but the (extrinsic) \textit{diameter} of $\Gamma$, while the minimum is called simply the \textit{width} of $\Gamma$, and denoted $w(\Gamma)$. For instance, if $\Gamma$ is an ellipse, $w(\Gamma)$ is the length of the minor axis, while the length of the major axis is equal to the diameter of $\Gamma$.

	If the embedding is of class $C^1$, it is straightforward to show that $w(\Gamma)$ is equal to the length of a line segment that meets $\Gamma$ at its extremities, in right angles. For instance, if such $\Gamma$ is also strictly convex, then for every $\theta$ such that $w(\theta)=w(\Gamma)$, the two parallel lines orthogonal to $\theta$ and containing $\Gamma$ between them that are closest to each other touch $\Gamma$ tangentially at exactly two points, which are the extremities of a line segment of this sort. In this case, this segment lies in the convex region bounded by $\Gamma$, and intersects $\Gamma$ only at its extremities. For non-convex curves, however, this might not be the case. For an example of non-convex plane region, bounded by a smooth embedded curve, that in fact contains no segment orthogonal to the boundary curve, see Figure 1 in \cite{Bos}. 
	
	The function $\theta\in S^1 \mapsto w(\theta)\in (0,+\infty)$ contains interesting information about of the geometry of $\Gamma$. For instance, the \textit{Cauchy-Crofton formula} for convex curves in the Euclidean plane \cite{San} computes the length of such rectifiable curves $\Gamma$ from the information about the width of $\Gamma$ in almost every direction. More precisely, if $\Gamma$ is convex, we have
\begin{equation*}
	\int_{S^1} w(\theta) d\theta = 2 L(\Gamma).
\end{equation*}
An immediate and interesting consequence of this formula is the inequality
\begin{equation*}
	\frac{w(\Gamma)}{L(\Gamma)} \leq \frac{1}{\pi},
\end{equation*}
which is an equality if and only if $\Gamma$ has the same width in all directions, that is, $\Gamma$ is a \textit{curve of constant width}. These curves are also characterised as those for which the width $w(\Gamma)$ equals the diameter of $\Gamma$. \\

	More generally, consider a smoothly embedded circle $\Gamma$ in a complete Riemannian surface. What is the right generalisation of the ``width" for such objects? Can it also be realised as the length of a certain minimising geodesic intersecting the curve orthogonally? How does this notion of ``width" compare to the diameter and the length of the curve? What should be the analogues of plane curves of constant width in this more general setup?
	
	We investigate these and other related questions, adopting the perspective of a min-max variational theory for a non-local, geometric functional.

\subsection{Lusternik-Schnirelmann theory} \label{introheuristics} The set $\mathcal{P}$ of subsets of the circle $S^1$ with at most two elements is a compact manifold that can be identified with the set of ordered pairs $(p,q)\in S^1\times S^1$ modulo the equivalence relation that identifies $(p,q)$ and $(q,p)$. In particular, $\mathcal{P}$ is homeo\-mor\-phic to a M\"obius band, whose boundary points are precisely the singletons $\{p\}\subset S^1$. The elements of $\mathcal{P}$ will be denoted by $\{p,q\}$ (where, of course, we may have $p=q$).

	If $\Gamma$ is a smoothly embedded circle in some complete Riemannian manifold $(M^n,g)$, we define a functional $\mathcal{D} : \mathcal{P} \rightarrow \mathbb{R}$ by setting 
\begin{equation*}
	\mathcal{D}(\{p,q\}) = d(p,q) \quad \text{for all} \quad p,q\in \Gamma,
\end{equation*}
where $d : M\times M \rightarrow [0,+\infty)$ is the Riemannian distance of $(M^n,g)$.

	Clearly, $\mathcal{D}$ is a non-negative continuous functional. While its absolute minimum is zero and attained precisely at the subsets with exactly one element, its maximum is just the \textit{diameter} of $\Gamma$ in $(M^n,g)$, that is, the maximum distance in $(M^n,g)$ between pairs of points of $\Gamma$.
	
	This particular bounded continuous function on $\mathcal{P}$ has more interesting properties, though. In fact, it passes down continuously to the quotient $\mathcal{P}_{*}$ of $\mathcal{P}$ by the relation that identifies all boundary points of $\mathcal{P}$ as a single point. By doing so, we can regard $\mathcal{D}$ as a continuous non-negative function on the real projective plane $\mathcal{P}_*$, which attains its minimum value zero and its maximum value equal to the diameter of $\Gamma$ in $(M^n,g)$.
	
	Inspired by the Morse and Lusternik-Schnirelmann theories about the number of critical points of smooth functions on a real projective plane, one could hope to detect another ``critical point" of $\mathcal{D}$ at some ``critical level" between the other two. The general notion of ``width" of the curve $\Gamma$ in $(M^n,g)$ that we will propose captures precisely this intuition.
	
	Before we continue, there are two issues of technical nature that are worth to be highlighted, because the geometric functional $\mathcal{D}$ has two interesting features from a variational perspective that pose new difficulties for the min-max method.
	
	The first feature is the non-local character of $\mathcal{D}$. By this we just mean that the geometry of $(M^n,g)$ must be known in order to compute the distance between pairs of points in $\Gamma$. In that regard, one can make an analogy between the ``critical value" that we are seeking to define and the first Steklov eigenvalue of a compact Riemannian surface with connected boundary. (\textit{Cf}. Section 4.1 of the survey article \cite{GirPol}).
	
	The second feature is the fact that the Riemannian distance function is not smooth in general. For instance, it is not smooth if there is a pair of points that is connected by two or more minimising geodesics. While in Euclidean space this does not happen, in other Riemannian manifolds this is a possibility.
	
	Nevertheless, the Riemannian distance function is well-behaved enough so to allow a distinction between ``critical" and ``regular" points of $\mathcal{D}$, and the development of a Morse theory. In the case of the distance function to a fixed point in a complete Riemannian manifold, the suitable notions were introduced by Grove and Shiohama \cite{GroShi}, and yielded important results in Riemannian Geometry (for instance, the Diameter Sphere Theorem \cite{GroShi} and Gromov's Betti Number Theorem \cite{Gro}). As it will become clearer from the discussion that follows, we will eventually prove that the ``critical value" of $\mathcal{D}$ predicted by the above informal discussion is indeed attained as the value of $\mathcal{D}$ at some ``critical point".

\subsection{Regular and critical points} \label{introcriticalpoints} Let $(M^n,g)$ be a complete Riemannian manifold, and let $\Gamma$ be a smoothly embedded circle in $(M^n,g)$.

	Given points $p$ and $q$ in $M$, let $\gamma:[0,a]\rightarrow M$ be a geodesic of $(M^n,g)$ joining $p$ and $q$. If $\gamma$ is not the trivial geodesic, we always assume it to be normalised, \textit{i.e.} $|\gamma'|=1$. A geodesic $\gamma$ is called \textit{minimising} if its length $L(\gamma)$ is equal to the Riemannian distance $d(p,q)$ between $p$ and $q$ in $(M^n,g)$.
	
	By the first variation formula of the length, if $c_t$ is a smooth variation of a geodesic $c_0=\gamma$ by smooth curves in $M$ joining points of $\Gamma$, then
\begin{equation}\label{eqfirstvariation}
	\frac{d}{dt}_{|_{t=0}}L(c_t) = \langle v_2,\gamma'(a)\rangle - \langle v_1,\gamma'(0)\rangle = \langle v_2,(\gamma'(a))^T\rangle - \langle v_1,(\gamma'(0))^T\rangle,
\end{equation}
where $v_1\in T_p\Gamma$, $v_2\in T_q\Gamma$ are the boundary values of the variational vector field $d c_t/dt_{|{t=0}}$ along $\gamma$, and $(\gamma(0)')^T$, $(\gamma(a)')^T$ denote orthogonal projections onto $T\Gamma$. (We follow standard conventions and use $\langle-,-\rangle$ for $g$ sometimes).

	As before, denote by $\mathcal{P}$ the set of subsets $\{x,y\}\subset \Gamma$ with at most two elements, and let $\mathcal{D}:\mathcal{P}\rightarrow [0,+\infty)$ be the functional that assigns to each $\{x,y\}\in \mathcal{P}$ the Riemannian distance between $x$ and $y$ in $(M^n,g)$, $\mathcal{D}(\{x,y\})=d(x,y)$. Motivated by the above remarks, we adopt the definition below. 
\begin{defi} \label{defcriticalpoints}
The set $\{p,q\}\subset \Gamma$ is called a regular point of $\mathcal{D}$ if there exists a vector $(v_1,v_2)\in T_p\Gamma\times T_q\Gamma$ such that, for every minimising geodesic $\gamma$ joining $p=\gamma(0)$ to $q=\gamma(a)$, we have
	\begin{equation*}
		\langle v_2,\gamma'(a)\rangle - \langle v_1,\gamma'(0)\rangle < 0.
	\end{equation*}
	The set $\{p,q\}\subset \Gamma$ is called a critical point  of $\mathcal{D}$ if it is not a regular point. 
\end{defi}
	
	Thus, if $\{p,q\}\subset \Gamma$ is a regular value, there exists a vector field $V$ on $M$, that is tangent to $\Gamma$, and whose flow $\phi_t$ has the following property: For \textit{every} minimising geodesic $\gamma$ in $(M^n,g)$ joining $p$ and $q$, the curves $\phi_t(\gamma)$ for small $t>0$ are curves with extremities in $\Gamma$ such that $L(\phi_t(\gamma))<L(\gamma)=d(p,q)$. In particular, the extremities of the curves $\phi_t(\gamma)$, for $t>0$ small enough, are at distance strictly smaller than $d(p,q)$.
	
	From Definition \ref{defcriticalpoints}, it is clear that all singletons $\{p\}\subset \Gamma$ are trivially critical points. More interestingly, if $p$ and $q$ are the extremities of a \textit{free boundary} minimising geodesic, \textit{i.e.} a minimising geodesic that is orthogonal to $\Gamma$ at both extremities, then $\{p,q\}\subset \Gamma$ is a critical point of $\mathcal{D}$. 
	
	There are, however, many other types of critical points. For instance, if $p$, $q\in \Gamma$ are points that are joined by two minimising geodesics $\gamma_1, \gamma_2:[0,a]\rightarrow M$ that are \textit{simultaneously stationary} in the sense that there exists a constant $c>0$ such that
\begin{equation*}
	(\gamma'_1(0))^T=-c(\gamma'_2(0))^T \quad \text{and} \quad (\gamma'_1(a))^{T}=-c(\gamma'_2(a))^{T},
\end{equation*}
then $\{p,q\}$ is a critical point of $\mathcal{D}$ as well. There are smoothly embedded circles in Riemannian manifolds such that all non-trivial critical points of $\mathcal{D}$ bound a pair of simultaneously stationary minimising geodesics, while none of them are the extremities of a free boundary minimising geodesic, see Example \ref{examellipsoids} in Section \ref{sectionexamples}.

	Free boundary minimising geodesics and pairs of simultaneously stationary minimising geodesics will play an important role when $\Gamma$ is the boundary of a totally convex disc inside a Riemannian surface, see Section \ref{introdiscs}.
	
	Finally, we remark that when the distance function $d$ of $(M^n,g)$ restricted to pairs of different points is a smooth function, it is immediate to check that Definition \ref{defcriticalpoints} captures precisely the standard regular and critical points of the restriction of $d$ to pairs of different points of $\Gamma$. This is the case, for instance, of Cartan-Hadamard manifolds.

\subsection{The width of a curve}\label{introwidth} Let $\Gamma$ be a smoothly embedded circle in a complete Riemannian manifold $(M^n,g)$. Following the discussion in Section \ref{introheuristics}, we propose a definition of the width of $\Gamma$. First, we define certain closed loops in the projective plane $\mathcal{P}_*$ that we call \textit{sweepouts}.

\begin{defi}\label{defiweepout}
	A family of pairs of points $\{p_t,q_t\}\subset \Gamma$, $t\in [0,1]$, is called a \textit{sweepout} of $\Gamma$ when the following conditions hold:
\begin{itemize}
	\item[$i)$] $p_0=q_0$ and $p_1=q_1$.
	\item[$ii)$] $t\in [0,1] \mapsto p_t\in \Gamma$ and $t\in [0,1] \mapsto q_t\in \Gamma$ are continuous maps.
	\item[$iii)$] $p_t$ and $q_t$ bound a closed arc $C_t\subset \Gamma$ in such way that $t\in [0,1] \mapsto C_t$ is a continuous map with $C_0=\{p_0\}=\{q_0\}$ and $C_1=\Gamma$.
\end{itemize}
\end{defi}
	
	Here, the distance between arcs $C_1$, $C_2 \subset \Gamma$ is measured by the length of the symmetric difference $C_1\Delta C_2 = (C_1\setminus C_2)\cup (C_2\setminus C_1)$.
	
	Sweepouts can be regarded as homotopically non-trivial loops in the projective plane $\mathcal{P}_*$. Indeed, this space is doubly covered by the set $\mathcal{O}_*$ formed by ordered pairs $(C_1,C_2)$ consisting of closed arcs of the circle $\Gamma$ that share only their extremities $\partial C_1 = \partial C_2=C_1\cap C_2$. The covering map is $(C_1,C_2)\in \mathcal{O}_* \mapsto \partial{C_1}=\partial C_2 \in \mathcal{P}_*$. Then, a continuous path $t\in [0,1]\mapsto \{p_t,q_t\}\subset \mathcal{P}_*$ is a sweepout of $\Gamma$ if and only if the continuous lift $t\in[0,1]\mapsto (C_t,\Gamma\setminus int(C_t))\in \mathcal{O}_*$, where $\partial C_t=\{p_t,q_t\}$ for $t\in (0,1)$, starts at $(\{p_0\},\Gamma)$ and finishes at $(\Gamma,\{p_1\})\neq (\{p_0\},\Gamma)$.
\begin{defi}\label{defwidth}
	Let $\Gamma$ be a smoothly embedded circle in a complete Riemannian manifold $(M^n,g)$. The width of $\Gamma$ is the number
	\begin{equation*}
		\mathcal{S}(\Gamma) = \inf_{\{p_t,q_t\}\in \mathcal{V}} \,\, \max_{t\in [0,1]} d(p_t,q_t)
	\end{equation*}
	where $\mathcal{V}$ is the set of sweepouts $\{p_t,q_t\}$ of $\Gamma$. 	
\end{defi}
	Since the subset of points that divides $\Gamma$ into two arcs of equal length form a compact subset of $\mathcal{P}$, and since every sweepout of $\Gamma$ contains points $\{p_t,q_t\}\subset \Gamma$ with this property because of condition $iii)$, we have
\begin{equation*}
	\mathcal{S}(\Gamma)>0.
\end{equation*}

	If $\Gamma$ is the boundary of a region $\Omega$ in a complete Riemannian surface $(M^2,g)$, and if all minimising geodesics joining points of $\Omega$ lie in $\Omega$, the distance between pairs of points in $\Gamma=\partial \Omega$ depends only on $(\Omega,g)$, and $\mathcal{S}(\partial \Omega)$ can be regarded as a Riemannian invariant of $(\Omega,g)$.

\subsection{The basic min-max theorem} We are now ready to formulate the most basic version of the min-max theorem for the functional $\mathcal{D}$.

\begin{thmA}
	Let $\Gamma$ be a smoothly embedded circle in a complete Riemannian manifold $(M^n,g)$. Then $\mathcal{S}(\Gamma)>0$ and there exists a critical point  $\{p,q\} \subset \Gamma$ of $\mathcal{D}$ such that
	\begin{equation*}
		\mathcal{D}(\{p,q\})=\mathcal{S}(\Gamma).
	\end{equation*}
\end{thmA}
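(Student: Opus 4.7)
The plan is to mimic the classical Lusternik--Schnirelmann deformation argument, with the modifications forced by the non-smoothness and non-locality of $\mathcal{D}$. Positivity of $\mathcal{S}(\Gamma)$ is already noted in the remark preceding the statement: every sweepout contains a pair $\{p_t,q_t\}$ that divides $\Gamma$ into two arcs of equal length, and such pairs form a compact subset of $\mathcal{P}$ disjoint from the diagonal, hence are bounded below by a positive constant in $\mathcal{D}$. So I concentrate on producing a critical point at the level $c := \mathcal{S}(\Gamma)>0$.

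Arguing by contradiction, suppose $K_c := \mathcal{D}^{-1}(c)$ consists entirely of regular points. Since $c>0$, $K_c$ lies in the interior of the M\"obius band $\mathcal{P}$, and $K_c$ is compact by continuity of $\mathcal{D}$. The main local step is a deformation lemma: around each $\{p,q\} \in K_c$, produce a vector field $V$ on $M$ tangent to $\Gamma$, a neighborhood $U$ of $\{p,q\}$ in $\mathcal{P}$, and a constant $\delta>0$ such that the flow $\phi_s$ of $V$ satisfies
\begin{equation*}
	\mathcal{D}(\{\phi_s(x), \phi_s(y)\}) \leq \mathcal{D}(\{x,y\}) - \delta s
\end{equation*}
for all $\{x,y\}\in U$ and all sufficiently small $s>0$. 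The non-trivial input is an upper semi-continuity property for the set of minimising geodesics, which follows from Arzel\`a--Ascoli: if $\{x_n,y_n\}\to \{x,y\}$ and $\gamma_n$ are unit-speed minimising geodesics joining $x_n$ and $y_n$, then a subsequence converges uniformly to a minimising geodesic $\gamma$ joining $x$ and $y$. Since the strict inequality in Definition \ref{defcriticalpoints} holds at \emph{every} minimising geodesic joining $p$ and $q$, and the first variation \eqref{eqfirstvariation} depends continuously on the geodesic endpoints and the vector field, the inequality persists uniformly on a small neighborhood; integrating yields the claimed decrease.

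Next I would globalise this by covering the compact set $K_c$ by finitely many such neighborhoods $U_i$ with associated vector fields $V_i$, and using a partition of unity on $\mathcal{P}$ subordinate to $\{U_i\}$ (together with a cutoff that kills the flow outside a neighborhood of $K_c$, ensuring in particular that singletons are fixed) to stitch the induced flows into a continuous deformation $F : \mathcal{P}\times[0,1]\to\mathcal{P}$ satisfying $F_{1}(\mathcal{D}^{-1}([c-\epsilon,c+\epsilon])) \subset \mathcal{D}^{-1}([0,c-\epsilon])$ for some $\epsilon>0$. Since $F$ is built from flows of vector fields tangent to $\Gamma$, at each time it is induced by a diffeomorphism of $\Gamma$, which carries arcs to arcs continuously; hence conditions (i)--(iii) of Definition \ref{defiweepout} are preserved when $F_s$ is applied pointwise in $t$.

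To conclude, pick a sweepout $\{p_t,q_t\}$ with $\max_t \mathcal{D}(\{p_t,q_t\}) < c+\epsilon$ and apply the deformation; the result is still a sweepout, now with $\max_t \mathcal{D}\leq c-\epsilon$, contradicting the definition of $\mathcal{S}(\Gamma)$. The main obstacle I anticipate is precisely the local deformation lemma in the presence of multiple minimising geodesics: verifying that a single $V$ admits a neighborhood on which the first variation is uniformly negative along \emph{every} minimising geodesic simultaneously requires the Arzel\`a--Ascoli compactness argument above, and is the analogue of the Grove--Shiohama construction of gradient-like vector fields for the distance function from a single point, here adapted to the non-local functional of pairs of points on $\Gamma$.
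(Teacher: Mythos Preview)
Your approach is correct and essentially identical to the paper's: the paper proves the local deformation lemma as Lemma~\ref{lemmopenandtheta} (using the compactness Lemma~\ref{lemmabasiccompactness}, which is your Arzel\`a--Ascoli step), patches via a partition of unity on $\mathcal{U}\subset\mathcal{P}$ in Lemma~\ref{lemquasigradient}, proves the linear decrease in Proposition~\ref{propgradientlikeflow}, and then runs exactly your contradiction argument in Theorem~\ref{thmAbis}.

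One small correction worth flagging: the patched flow $F_s$ on $\mathcal{P}$ is \emph{not} in general induced by a diffeomorphism of $\Gamma$, because the partition of unity lives on $\mathcal{P}$ and so the vector $(v_1,v_2)$ assigned to $\{p,q\}$ depends on both coordinates jointly. The correct reason the deformed family is still a sweepout is the one the paper uses: the flow is an isotopy of $\mathcal{P}_*$ fixing a neighbourhood of $0_*$, and sweepouts are exactly the homotopically non-trivial loops in $\mathcal{P}_*$, so this property is preserved. This does not affect the validity of your argument, only the justification of that step.
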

	An immediate consequence of Definition \ref{defcriticalpoints} is that, if some non-trivial critical point $\{p,q\}\subset \Gamma$ bounds only one minimising geodesic, then this minimising geodesic meets $\Gamma$ orthogonally both at $p$ and at $q$. Thus, if all pairs of points of $(M^n,g)$ are joined by a unique minimising geodesic, Theorem A guarantees that $\Gamma$ is always cut orthogonally by some geodesic that is, at the same time, the curve of least length in $(M^n,g)$ joining its extremities in $\Gamma$. This is the case for Cartan-Hadamard manifolds, for example.
	
	On the other hand, there are examples of curves $\Gamma$ bounding convex regions of Riemannian surfaces such that $\mathcal{S}(\Gamma)$ is not attained as the length of a free boundary minimising geodesic, see Example \ref{examellipsoids} in Section \ref{sectionexamples}.
	
	The preceding remarks already points to the fact that Theorem A detects a different geometric feature of certain Riemannian surfaces with convex, connected boundary $(\Omega,g)$ than other notions of width arising from min-max procedures designed to produce free boundary geodesics on such surfaces. Before we elaborate on this point in Section \ref{introcomparison}, we discuss what else can be said about the minimising geodesics bounded by a non-trivial critical point of $\mathcal{D}$ in $\partial \Omega$, when $\Omega$ is a disc.

\subsection{The width of the boundary of discs} \label{introdiscs} 
	Let $\Omega$ be a totally convex disc with smooth boundary inside a complete Riemannian surface $(M^2,g)$. Recall that a subset of a Riemannian manifold is called \textit{totally convex} when every geodesic with extremities in the subset lies entirely in the subset. Thus, for most of our purposes, we may well forget about the ambient surface and work within $(\Omega,g)$. (A large class of examples of these objects is described in Example \ref{examconvex} in Section \ref{sectionexamples}).
	
	The boundary of a totally convex disc $(\Omega,g)$ is \textit{convex} in $\Omega$ in the sense that the second fundamental form $A$ of $\partial \Omega$, with respect to the outward pointing unit normal $N$, satisfies $A(X,X)=\langle\nabla_X N,X\rangle\geq 0$ for all vector fields $X$ that are tangent to $\partial \Omega$. (If $X$ has norm one, $A(X,X)$ is nothing but the \textit{geodesic curvature} of $\partial \Omega$ in $(\Omega,g)$). If the strict inequality holds for all non-zero tangent vectors to the boundary, we say that the boundary of $\Omega$ is \textit{strictly convex}.
	
	If the extremities of a geodesic $\gamma$ lie in $\partial \Omega$, we say that $\gamma$ is \textit{proper} whenever it intersects $\partial \Omega$ only at its extremities. By the convexity assumption on $\Omega$, a non-proper geodesic joining two boundary points must be itself part of $\partial \Omega$. In particular, if the boundary is strictly convex, then all geodesics joining boundary points are proper.
	
	Under these geometric conditions, critical points of $\mathcal{D}$ on $\partial \Omega$ and the minimising geodesics they bound enjoy further properties. For instance, a uniqueness property holds (Proposition \ref{propuniqu}), and local minima of $\mathcal{D}$ bound exactly one minimising geodesic (Proposition \ref{proplocalminima}). Moreover, Theorem A can be refined to yield much more precise information about critical points of $\mathcal{D}$ at the level $\mathcal{S}(\partial \Omega)$, at least under a natural geometric condition that we will introduce shortly.\\
\indent First, we need to recall the definition of the Morse index of a free boundary geodesic. Given a free boundary proper geodesic $\gamma : [0,a]\rightarrow \Omega$ and a vector field $X$ along $\gamma$ that is normal to $\gamma$ (and therefore tangent to $\Gamma$ at the extremities of $\gamma$), define
\begin{multline*}
	Q(X,X) =  \int_{0}^{a}  \left(|\nabla_{\gamma'} X|^2 - K|X|^2\right)dt \\ - A(X(\gamma(0)),X(\gamma(0)))- A(X(\gamma(a)),X(\gamma(a))),
\end{multline*}
where $K$ is the Gaussian curvature of $(\Omega,g)$.

	This quadratic expression in $X\in \Gamma(N\gamma)$ appears when one computes the second variation of the length of a smooth family of smooth curves $c_t$, with extremities in $\partial \Omega$, starting at $c_0=\gamma$ and with variational vector field $X$ along $\gamma$. In fact, in this case
\begin{equation}\label{eqsecondvariation}
	\frac{d^2}{dt^2}_{|_{t=0}} L(c_t) = Q(X,X). 
\end{equation}
	
	The geodesic $\gamma$ is called \textit{free boundary stable} if $Q(X,X)\geq 0$ for every $X\in \Gamma(N\gamma)$, and \textit{free boundary unstable} otherwise. The \textit{free boundary index} of the geodesic $\gamma$ is the index of the quadratic form $Q$. (If $\gamma$ is minimising, then the second variation formula \eqref{eqsecondvariation} implies $Q(X,X)\geq 0$ for every $X\in \Gamma(N\gamma)$ that vanishes at the extremities, but this condition does not imply free boundary stability).
	
	For instance, consider $X$ so that $\{\gamma'(s),X(\gamma(s))\}$ is an orthonormal basis for all $s$. Then $Q(X,X)<0$ if the Gaussian curvature of $(\Omega,g)$ is non-negative and its boundary is strictly convex, or if the Gaussian curvature is positive and the boundary is convex. In other words, under these geometric assumptions, no free boundary geodesic of $(\Omega,g)$ is free boundary stable.
	
	The geometric assumption
\begin{equation*}
	(\star) \textit{ no free boundary stable geodesic exists on } (\Omega,g)
\end{equation*}
allows a rather detailed understanding of the possible configurations of minimising geodesics bounded by the critical points whose existence is guaranteed by Theorem A. 
\begin{thmB}
	Let $(\Omega,g)$ be a totally convex disc with smooth boundary in a complete Riemannian surface. Assume $(\Omega,g)$ has property $(\star)$. Then every critical point $\{p,q\}\subset \partial \Omega$ of $\mathcal{D}$ with $p\neq q$ satisfies
	\begin{equation*}
		d(p,q)\geq \mathcal{S}(\partial \Omega).
	\end{equation*} 
	Moreover, let $\{p,q\}\subset \partial \Omega$ be a critical point with $d(p,q)= \mathcal{S}(\partial \Omega)$. Then:
	\begin{itemize}
		\item[$i)$] If there exists a free boundary minimising geodesic joining $p$ and $q$, then this geodesic has free boundary index one.
		\item[$ii)$] If two different minimising geodesics $\gamma_1, \gamma_2:[0,a]\rightarrow \Omega$ joining the points $p$ and $q$ satisfy 
		\begin{equation*}
			\langle (\gamma'_1(0))^T,(\gamma'_2(0))^T\rangle \leq 0 \quad \text{and} \quad \langle (\gamma'_1(a))^T,(\gamma'_2(a))^T\rangle \leq 0,
		\end{equation*}
		and none of them is free boundary, then $\gamma_1$ and $\gamma_2$ are simultaneously stationary.
	\end{itemize}
\end{thmB}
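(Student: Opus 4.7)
My plan is to address the three assertions by the same theme: in each case, the failure of the conclusion will be converted into a competitor sweepout whose maximum of $\mathcal{D}$ is strictly less than $\mathcal{S}(\partial\Omega)$, contradicting its definition. The required local analysis is supplied by $(\star)$, and by the classification of critical points in totally convex discs alluded to in the paragraph preceding the theorem (Propositions~\ref{propuniqu}--\ref{proplocalminima} and the surrounding discussion).

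\emph{Proof of the first assertion.} I would set $c_*:=\inf\{d(p,q): \{p,q\} \text{ is a non-trivial critical point}\}$, and observe that $c_*$ is attained at some non-trivial critical $\{p_*,q_*\}$: the regular condition in Definition~\ref{defcriticalpoints} is open, so critical points form a closed subset of $\mathcal{P}$. Below level $c_*$, every point of $\mathcal{P}\setminus\partial \mathcal{P}$ is regular, and from Definition~\ref{defcriticalpoints} one extracts a locally Lipschitz vector field on $\mathcal{P}\setminus\partial\mathcal{P}$ whose flow strictly decreases $\mathcal{D}$ and, after reparametrisation, deforms $\{\mathcal{D}<c_*\}\setminus\partial\mathcal{P}$ into $\partial\mathcal{P}$. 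At $\{p_*,q_*\}$ I invoke the classification: either there is a free boundary minimising geodesic $\gamma$ from $p_*$ to $q_*$, in which case $(\star)$ gives a normal variation $X$ with $Q(X,X)<0$ and a corresponding curve variation $c_t$ with $L(c_t)<L(\gamma)$ for $t\neq 0$; or $\{p_*,q_*\}$ bounds a pair of simultaneously stationary minimising geodesics, which likewise produces a length-decreasing local variation. Either way, $t\mapsto\{c_t(0),c_t(a)\}$ is a continuous path in $\mathcal{P}$ through $\{p_*,q_*\}$ on which $\mathcal{D}<c_*$ for $t\neq 0$, with the two branches $t<0, t>0$ pushing the endpoints into opposite arcs of $\partial \Omega\setminus\{p_*,q_*\}$. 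Concatenating this local path with the downward flow on its two regular ends produces a loop in $\mathcal{P}_*$ with $\max\mathcal{D}=c_*$. The opposite branching at $\{p_*,q_*\}$ ensures that the lift to the double cover $\mathcal{O}_*$ connects $(\{p_0\},\Gamma)$ to $(\Gamma,\{p_1\})$, so the loop is a sweepout; hence $\mathcal{S}(\partial\Omega)\leq c_*\leq d(p,q)$ for every non-trivial critical point $\{p,q\}$.

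\emph{Proof of part $i)$.} Suppose for contradiction the free boundary index of $\gamma$ is at least $2$, and let $V$ be a $2$-dimensional subspace of normal variations on which $Q$ is negative definite. The endpoint map $E\colon V\to T_p\partial\Omega\oplus T_q\partial\Omega$ has trivial kernel, for a non-zero kernel element would be a fixed-endpoint normal variation of $\gamma$ with $Q<0$, contradicting that $\gamma$ minimises length among curves from $p$ to $q$. Hence $E$ is an isomorphism of $2$-dimensional spaces, and a $2$-parameter variation $c_{s,t}$ built from $V$ via $\exp_\gamma$ produces a full neighbourhood $U$ of $\{p,q\}$ in $\mathcal{P}$ on which $\mathcal{D}<\mathcal{S}$ except at $\{p,q\}$ itself. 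A pulled-tight minimising sequence of sweepouts whose maxima concentrate at $\{p,q\}$ can then be modified locally to detour along a small circle inside $U$ (such a modification is homotopically trivial in the $2$-disc and preserves the sweepout condition), producing a competitor with $\max\mathcal{D}<\mathcal{S}$, a contradiction.

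\emph{Proof of part $ii)$.} Let $e_p\in T_p\partial\Omega$, $e_q\in T_q\partial\Omega$ be unit tangent vectors and write $(\gamma_j'(0))^T=a_je_p$ and $(\gamma_j'(a))^T=b_je_q$. The hypotheses give $a_j,b_j\neq 0$, $a_1a_2\leq 0$, $b_1b_2\leq 0$, and a direct calculation identifies ``simultaneously stationary'' with the condition that $(a_1,-b_1),(a_2,-b_2)\in\mathbb{R}^2$ are negatively proportional. If they are not, I pick $(s_1,s_2)\in\mathbb{R}^2$ with $\langle(s_1,s_2),(a_j,-b_j)\rangle>0$ for $j=1,2$, set $v_1=s_1e_p$, $v_2=s_2e_q$, and observe via \eqref{eqfirstvariation} that the first variations of $L(\gamma_1)$ and $L(\gamma_2)$ along $(v_1,v_2)$ are both strictly negative. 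Deforming $(p,q)$ in direction $(v_1,v_2)$ yields curves $\gamma_{j,t}$ from $p_t$ to $q_t$ with $L(\gamma_{j,t})<d(p,q)$ for $j=1,2$ and small $t>0$, so $d(p_t,q_t)\leq L(\gamma_{1,t})<\mathcal{S}$, \emph{irrespective of the presence of other minimising geodesics between $p$ and $q$}. As in part $i)$, this opens a one-sided neighbourhood of $\{p,q\}$ on which $\mathcal{D}<\mathcal{S}$; detouring pulled-tight sweepouts through this region lowers their maxima below $\mathcal{S}$, the same contradiction.

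The hardest step is the topological verification in the first assertion: that the loop produced by concatenating the local unstable path through $\{p_*,q_*\}$ with downward flows on each side is non-contractible in $\mathcal{P}_*\cong \mathbb{RP}^2$. This requires tracing carefully how the two branches of the unstable variation grow the arc $C_t$ across opposite halves of $\partial\Omega$, so that the lift to $\mathcal{O}_*$ actually interchanges $(\{p_0\},\Gamma)$ and $(\Gamma,\{p_1\})$ rather than returning to the starting pair. A related technical point in Parts $i)$ and $ii)$ is justifying the existence and concentration at $\{p,q\}$ of pulled-tight minimising sequences of sweepouts, which is standard but essential for the local detour modifications to reduce the maximum of a competitor sweepout below $\mathcal{S}$.
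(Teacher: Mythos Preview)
Your overall strategy---build a sweepout through the critical point with strict maximum there, then modify---matches the paper's, but two genuine gaps remain.

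First, in part $ii)$ you correctly identify a direction $(v_1,v_2)$ along which both $L(\gamma_1)$ and $L(\gamma_2)$ decrease, but this only gives a \emph{one}-sided ray $\{p_t,q_t\}_{t>0}$ with $\mathcal{D}<\mathcal{S}$. A one-dimensional sweepout cannot be detoured around $\{p,q\}$ using a single ray; you need a two-sided decrease. The paper's resolution (Proposition~\ref{propstarcriticalsimultaneouslystationary}) is that the sweepout constructed in Proposition~\ref{propstarcritical2} has a specific structure near $t=1/2$: it is built from deformations of $\gamma_1$ for $t\le 1/2$ and of $\gamma_2$ for $t\ge 1/2$. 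Applying the flow of $(v_1,v_2)$ then decreases the lengths of \emph{both} families simultaneously, yielding the two-sided modification. Your first-assertion construction is too vague in the non-free-boundary case (and mislabels it ``simultaneously stationary'' rather than ``extremal geodesics satisfying the angle conditions of Proposition~\ref{propextremalgeodesics}'') to supply this structure.

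Second, your appeal in parts $i)$ and $ii)$ to ``pulled-tight minimising sequences of sweepouts whose maxima concentrate at $\{p,q\}$'' is unjustified: such sequences concentrate at \emph{some} critical point at level $\mathcal{S}$, not necessarily the given one. The fix is to use the specific sweepout through $\{p,q\}$ that you build in the first assertion---but for that sweepout to carry the fine structure (properties $iii)$--$v)$ in Propositions~\ref{propstarcritical1}--\ref{propstarcritical2}) needed for the modifications in $i)$ and $ii)$, the extension from the local path to a full sweepout must be done geometrically. The paper uses Birkhoff chord shortening, which keeps the curves inside the half-discs $\Omega_1,\Omega_2$ cut out by $\gamma$ (or by $\gamma_1,\gamma_2$); this directly forces the endpoints to stay in opposite arcs and makes the non-contractibility of the loop in $\mathcal{P}_*$ automatic. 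Your gradient-like flow on $\{0<\mathcal{D}<c_*\}$ decreases $\mathcal{D}$ but does not control which arc the endpoints lie in, so the verification you flag as ``hardest'' is not merely hard---it is not addressed by the tools you invoke.
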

	
	This theorem and its proof are reminiscent of some results of Marques and Neves \cite{MarNev-Duke}. There are examples of Riemannian discs that satisfy the assumptions of Theorem B and provide examples of both behaviours described in $i)$ and $ii)$, see Example \ref{examellipsoids} in Section \ref{sectionexamples}.
	
	On the other hand, if the assumption $(\star)$ is dropped, there are examples where one finds critical points $\{x,y\}\subset \partial \Omega$ of $\mathcal{D}$ with $d(x,y)<\mathcal{S}(\partial \Omega)$, see Example \ref{examhalfdumbbell} in Section \ref{sectionexamples}.
	
	Plane convex regions with smooth boundary are totally convex regions of the plane, and among them the strictly convex ones satisfy assumption $(\star)$. Therefore, a consequence of Theorem B is that the geometric invariant $\mathcal{S}(\partial \Omega)$ of a strictly convex plane region $\Omega$ with smooth boundary is equal to the width of $\partial \Omega$ discussed in Section \ref{introplanar}. This gives some justification to our choice of terminology. Moreover, since plane smooth convex curves can be approximated by plane smooth strictly convex curves (\textit{e.g.} by flowing it a bit by the curve shortening flow), by a straightforward continuity argument we can actually conclude:

\begin{cor}\label{corcoincidencewidths}
	Let $\Omega$ be a compact, convex, regular domain of the Euclidean plane. Then $\mathcal{S}(\partial \Omega)=w(\partial \Omega)=\min_{\theta\in S^1}w(\theta)$.	 
\end{cor}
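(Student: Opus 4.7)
The plan is to first establish the identity in the strictly convex case by invoking Theorem B, and then to extend it to the general case by a continuity argument along an approximation by strictly convex domains. Suppose $\partial\Omega$ is strictly convex. The excerpt observes that the vanishing Gaussian curvature of the plane together with strict convexity of $\partial\Omega$ forces $Q(X,X)<0$ for a suitable normal field, so property $(\star)$ holds and Theorem B applies. Let $\{p,q\}\subset \partial\Omega$ be a critical point with $d(p,q)=\mathcal{S}(\partial\Omega)$, which exists by Theorem A. Since any two distinct points in the Euclidean plane are joined by a \emph{unique} minimising geodesic, alternative $ii)$ of Theorem B (which requires two distinct minimising geodesics) is vacuous, and the criticality condition in Definition \ref{defcriticalpoints} reduces to $(\gamma'(0))^T=(\gamma'(a))^T=0$. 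Thus the unique segment $\gamma=\overline{pq}$ is a \emph{double normal} of $\partial\Omega$, meeting it orthogonally at both endpoints.

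Two inequalities then follow. For $\mathcal{S}(\partial\Omega)\geq w(\partial\Omega)$, set $\theta=(q-p)/|q-p|$; the two lines through $p$ and $q$ perpendicular to $\theta$ are supporting lines of $\Omega$ (by convexity and the orthogonality of $\overline{pq}$ to $T_p\partial\Omega$ and $T_q\partial\Omega$), so $w(\theta)=|p-q|=\mathcal{S}(\partial\Omega)$, whence $\mathcal{S}(\partial\Omega)\geq w(\partial\Omega)$. For the opposite inequality, let $\theta_0$ minimise $w(\theta)$ and choose Cartesian coordinates so that $\theta_0=e_1$ and $\Omega\subset\{0\leq x_1\leq w(\partial\Omega)\}$. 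For $y\in[y_{\min},y_{\max}]$ (the vertical extent of $\Omega$), intersect $\Omega$ with the horizontal line at height $y$ to obtain a chord with endpoints $\{p_y,q_y\}\subset\partial\Omega$ of length at most $w(\partial\Omega)$. Strict convexity ensures that $y\mapsto\{p_y,q_y\}$ is continuous and that the chord collapses to a single point at $y=y_{\min}$ and $y=y_{\max}$; taking $C_y$ to be the arc bounded below by the chord at height $y$ yields a sweepout in the sense of Definition \ref{defiweepout} with $\max_y d(p_y,q_y)\leq w(\partial\Omega)$.

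For general convex smooth $\Omega$, I would approximate $\partial\Omega$ in $C^\infty$ by strictly convex smooth curves $\partial\Omega_\varepsilon$ (for instance by running the curve shortening flow for a short time, which renders any smooth convex plane curve strictly convex instantaneously). Both $w(\partial\Omega_\varepsilon)\to w(\partial\Omega)$, which is immediate from continuity of the support function, and $\mathcal{S}(\partial\Omega_\varepsilon)\to\mathcal{S}(\partial\Omega)$, obtained by transferring sweepouts between the two curves via the $C^\infty$-close diffeomorphism realising the approximation and controlling the change in $\max_t d(p_t,q_t)$ by the Hausdorff distance between the curves. Passing to the limit in the identity of the strict case finishes the proof. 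The most delicate step is this last continuity argument — already flagged in the text as ``straightforward'' — while within the strict case the crux is simply that Euclidean uniqueness of minimising geodesics forces every non-trivial critical point of $\mathcal{D}$ to be a double normal, thereby reducing the entire critical-point analysis to elementary convex geometry.
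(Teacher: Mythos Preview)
Your proof is correct and follows the paper's overall architecture (strictly convex case first, then approximate via curve shortening flow and pass to the limit), including the same argument for $\mathcal{S}(\partial\Omega)\geq w(\partial\Omega)$: Theorem A produces a critical pair, Euclidean uniqueness of minimising geodesics forces it to bound a double normal, and convexity makes the orthogonal lines supporting. The one genuine difference is in the inequality $\mathcal{S}(\partial\Omega)\leq w(\partial\Omega)$. The paper deduces this from the first assertion of Theorem B: the classical width $w(\partial\Omega)$ is realised by a double normal (Section \ref{introplanar}), whose endpoints are a non-trivial critical point of $\mathcal{D}$, hence $w(\partial\Omega)=\mathcal{D}(\{p,q\})\geq \mathcal{S}(\partial\Omega)$. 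You instead build an explicit sweepout by parallel chords in the direction realising the minimal width. Your route is more elementary---it needs only Theorem A and the definition of $\mathcal{S}$, and your invocation of Theorem B is in fact never used---whereas the paper's route exhibits the result as a genuine \emph{corollary} of Theorem B, which is the point being made in that passage.
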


	Non-convex plane smooth curves might be such that $\mathcal{S}(\Gamma)<w(\Gamma)$, though. For instance, a thin U-shapped curve $\Gamma$ has strictly larger widths $w(\theta)$ in all directions than the value of $\mathcal{S}(\Gamma)$.

	Another consequence of Theorem B is the following existence theorem, of independent interest:

\begin{cor}\label{corexistence}
	Let $(\Omega,g)$ be a totally convex disc with smooth boundary in a complete Riemannian surface. Assume $(\Omega,g)$ has property $(\star)$. Then, either there exists a free boundary proper minimising geodesic of free boundary index one, or there exists a pair of simultaneously stationary minimising geodesics with extremities in $\partial \Omega$.
\end{cor}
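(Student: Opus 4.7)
The plan is to apply Theorem A to the boundary $\partial\Omega$, which yields a critical point $\{p,q\}\subset\partial\Omega$ of $\mathcal{D}$ with $\mathcal{D}(\{p,q\})=\mathcal{S}(\partial\Omega)>0$; in particular $p\neq q$. The two alternatives of the corollary correspond to the dichotomy: either some minimising geodesic joining $p$ and $q$ is free boundary, or none is.

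If some minimising geodesic joining $p$ and $q$ is free boundary, then Theorem B$(i)$ immediately asserts that this geodesic has free boundary index one, which is the first alternative.

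Otherwise, the plan is to use the critical condition to produce two distinct minimising geodesics $\gamma_1,\gamma_2$ joining $p$ and $q$ that satisfy the inner-product hypotheses of Theorem B$(ii)$; that theorem will then certify that they are simultaneously stationary. To extract the pair, I fix unit vectors $e_1\in T_p\partial\Omega$ and $e_2\in T_q\partial\Omega$, and for each minimising geodesic $\gamma\colon[0,a]\to\Omega$ from $p$ to $q$ I set $a(\gamma)=\langle\gamma'(0),e_1\rangle$, $b(\gamma)=\langle\gamma'(a),e_2\rangle$, and $w(\gamma)=(-a(\gamma),b(\gamma))\in\mathbb{R}^2$. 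The first variation formula \eqref{eqfirstvariation} combined with Definition \ref{defcriticalpoints} says that for every $(\alpha,\beta)\in\mathbb{R}^2$ some minimising $\gamma$ satisfies $\langle w(\gamma),(\alpha,\beta)\rangle\geq 0$; equivalently, the compact set $V:=\{w(\gamma)\}\subset\mathbb{R}^2$ is not contained in any open half-plane through the origin, and so $0\in\operatorname{conv}V$. The hypothesis that no minimising geodesic is free boundary gives $0\notin V$. Carath\'eodory's theorem in $\mathbb{R}^2$ then yields at most three vectors $w_1,\dots,w_k\in V$ with $k\in\{2,3\}$ and strictly positive weights $\lambda_i$ satisfying $\sum_i\lambda_iw_i=0$. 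A short sign analysis of this relation produces two distinct indices $i,j$ with $a(\gamma_i)a(\gamma_j)\leq 0$ and $b(\gamma_i)b(\gamma_j)\leq 0$, which are exactly the conditions $\langle(\gamma_i'(0))^T,(\gamma_j'(0))^T\rangle\leq 0$ and $\langle(\gamma_i'(a))^T,(\gamma_j'(a))^T\rangle\leq 0$ required by Theorem B$(ii)$.

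The only step requiring genuine care is the final combinatorial argument: one must verify that any collection of at most three nonzero vectors in $\mathbb{R}^2$ with the origin in their convex hull contains two vectors lying in diagonally opposite closed quadrants. The case $k=2$ is trivial (the vectors are antipodal), but for $k=3$ one must use the relation $\sum\lambda_iw_i=0$ to argue that if no two vectors already sit in diagonally opposite open quadrants then a third vector with a coordinate equal to zero---geometrically, a geodesic meeting $\partial\Omega$ orthogonally at exactly one endpoint---must exist and serve as a ``bridge'' between otherwise adjacent quadrants, with some attention needed to guarantee the two chosen $w_i$'s correspond to distinct geodesics.
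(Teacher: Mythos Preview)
Your argument is correct, but it takes a different route from the paper in the crucial step of producing, in the non--free-boundary case, two minimising geodesics satisfying the angle hypotheses of Theorem~B$(ii)$. The paper's approach is to invoke Proposition~\ref{propextremalgeodesics}: because $\Omega$ is a disc, the Jordan curve theorem yields two \emph{extremal} minimising geodesics $\gamma_+,\gamma_-$ joining $p$ and $q$, and that proposition shows directly that they satisfy $\langle\nu_{\gamma_+}(p),T(p)\rangle\le 0\le\langle\nu_{\gamma_-}(p),T(p)\rangle$ and $\langle\nu_{\gamma_-}(q),T(q)\rangle\le 0\le\langle\nu_{\gamma_+}(q),T(q)\rangle$; if $\gamma_+=\gamma_-$ the unique minimising geodesic is free boundary, so in your second case $\gamma_+\ne\gamma_-$ and Theorem~B$(ii)$ applies immediately. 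Your convex-hull/Carath\'eodory argument avoids any appeal to the disc topology and works purely from the definition of criticality; it is more self-contained, but it requires the small combinatorial lemma at the end. (Your sketch of that lemma is slightly off: one does not need a vector with a zero coordinate to appear. If no two of $w_1,w_2,w_3$ lie in opposite \emph{open} quadrants, the positive relation $\sum\lambda_iw_i=0$ forces the three vectors to occupy three pairwise-adjacent closed quadrants, and the two lying in the non-adjacent pair among those three already satisfy $a_ia_j\le 0$ and $b_ib_j\le 0$.) Note also that distinct $w_i$'s automatically come from distinct geodesics, since in a surface the tangential component $a(\gamma)$ together with the inward-pointing condition determines $\gamma'(0)$; and ``proper'' comes for free, since a free-boundary geodesic contained in $\partial\Omega$ would have $\gamma'(0)$ simultaneously tangent and orthogonal to $\partial\Omega$.
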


\subsection{Width, diameter and length} Let $\Gamma$ be a smoothly embedded circle in a complete Riemannian manifold. Clearly, the distance between two points of $\Gamma$ is at most the diameter of $\Gamma$, which by its turn is bounded by the length of the shortest arc of $\Gamma$ bounded by them. Thus, 
\begin{equation*}
	\mathcal{S}(\Gamma) \leq diam(\Gamma) \leq \frac{1}{2}L(\Gamma),
\end{equation*} 
Notice that $L(\Gamma)/2$ is nothing but the \textit{(intrinsic) diameter} of $(\Gamma,g_{|_{\Gamma}})$.

	In Section \ref{sectionexamples}, we describe examples where equality holds between the first and second numbers, or between the second and the third numbers, without all of them being equal, see Examples \ref{examellipsoids} and \ref{examellipsoids3}.
	
	It is interesting to characterise when any of the above inequalities is an equality. The most interesting case is perhaps the equality between the width and the diameter, because plane curves of constant width have this property.

\begin{thmC}
	Let $\Gamma$ be a smoothly embedded circle in a complete Riemannian manifold.
	\begin{itemize}
		\item[$a)$] If there exists a continuous map $\phi : \Gamma \rightarrow \Gamma$ such that $d(x,\phi(x))=diam(\Gamma)$, then $\mathcal{S}(\Gamma)=diam(\Gamma)$.
		\item[$b)$] If $\mathcal{S}(\Gamma)=diam(\Gamma)$, then for every $x\in \Gamma$ there exists $y\in \Gamma$ such that $d(x,y)=diam(\Gamma)$.
	\end{itemize}
	Assume, moreover, that $\Gamma$ is the boundary of a totally convex smoothly embedded disc. Then $\mathcal{S}(\Gamma)=diam(\Gamma)$ if and only if there exists a continuous map $\phi : \Gamma \rightarrow \Gamma$ such that $d(x,\phi(x))=diam(\Gamma)$.
\end{thmC}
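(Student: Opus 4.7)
For assertion (a), the plan is an intermediate value argument. Since $D := \mathrm{diam}(\Gamma) > 0$, the hypothesis $d(x,\phi(x)) = D$ forces $\phi$ to have no fixed points; by the Lefschetz fixed-point theorem $\phi: \Gamma \to \Gamma$ thus has degree one. Identifying $\Gamma$ with $\mathbb{R}/L\mathbb{Z}$ via arc-length parametrisation, I choose the lift $\tilde\phi: \mathbb{R} \to \mathbb{R}$ with $\tilde\phi(s+L) = \tilde\phi(s) + L$ and $\tilde\phi(s) - s \in (0,L)$ for every $s$. Given any sweepout $\{p_t,q_t\}$ of $\Gamma$ with arcs $C_t$, I continuously lift the arcs to intervals $[P_t,Q_t] \subset \mathbb{R}$ with $P_0 = Q_0$ and $Q_1 - P_1 = L$. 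The continuous function $f(t) := Q_t - \tilde\phi(P_t)$ satisfies $f(0) = -(\tilde\phi(P_0) - P_0) < 0$ and $f(1) = L - (\tilde\phi(P_1) - P_1) > 0$, so the intermediate value theorem produces $t^* \in (0,1)$ with $Q_{t^*} = \tilde\phi(P_{t^*})$, projecting in $\Gamma$ to $q_{t^*} = \phi(p_{t^*})$. Hence $d(p_{t^*},q_{t^*}) = D$, proving $\mathcal{S}(\Gamma) \geq D$; combined with the trivial bound $\mathcal{S}(\Gamma) \leq D$, equality holds.

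Assertion (b) is immediate contrapositively: if some $x_0 \in \Gamma$ failed to admit any $y \in \Gamma$ with $d(x_0,y) = D$, then by continuity and compactness $\max_{y \in \Gamma} d(x_0,y) < D$, and the sweepout with $p_t \equiv x_0$ and $q_t$ parametrising $\Gamma$ once from $x_0$ back to $x_0$ would have $\max_t d(p_t,q_t) < D$, contradicting $\mathcal{S}(\Gamma) = D$.

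For the equivalence under the totally convex disc hypothesis, the ``if'' direction is (a). For ``only if'', assume $\Omega$ is a totally convex disc with smooth boundary $\Gamma$ and $\mathcal{S}(\Gamma) = D$. By (b) the set $F(x) := \{y \in \Gamma : d(x,y) = D\}$ is non-empty for every $x$, and upper semi-continuous in $x$ by continuity of $d$. My plan is to show $F(x)$ is a singleton for every $x$, after which $\phi(x) := F(x)$ is single-valued and upper semi-continuous, hence continuous, and satisfies $d(x,\phi(x)) = D$ by construction. To prove uniqueness, I suppose $y_1 \neq y_2$ both lie in $F(x)$ and seek a contradiction from first variation: the first-order maximality of $d(\cdot,y_i)$ on $\Gamma$ at $x$, together with the super-differential formula $\partial_+ d(\cdot,y_i)(x;v) = -\sup_\gamma \langle v,\gamma'(0)\rangle$ (supremum over minimising geodesics $\gamma$ from $x$ to $y_i$) and total convexity of $\Omega$ (which forces each such $\gamma'(0)$ into the inner half-space at $x$), implies that the convex hull of these initial velocities meets the inward normal line in $T_xM$. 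Either some minimising geodesic has $\gamma'(0) = N(x)$ exactly, forcing $y_i = \exp_x(D N(x))$ and hence $y_1 = y_2$, or the tangential components of the min geodesics straddle the normal direction on both sides, in which case a tangent direction $v^* \in T_x\Gamma$ decreases $d(\cdot,y_i)$ strictly in first order for both $i$; the perturbed sweepout with base point $x$ replaced by $\exp_x(\epsilon v^*)$ then has maximum $< D$, contradicting $\mathcal{S}(\Gamma) = D$.

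The hardest part will be handling the edge case in the ``only if'' argument where, say, $y_1 = \exp_x(D N(x))$ while $y_2 \neq y_1$ also lies in $F(x)$: here the first-order variation of $d(\cdot,y_1)$ vanishes in every tangent direction at $x$, so no single direction $v^*$ decreases both $d(\cdot,y_1)$ and $d(\cdot,y_2)$ simultaneously at first order. Resolving this requires combining the maximality conditions at both endpoints $x$ and $y_i$, and will likely rely on the earlier uniqueness results of the paper (Proposition \ref{propuniqu}, and possibly the characterisation of local minima via Proposition \ref{proplocalminima}) to exclude this configuration.
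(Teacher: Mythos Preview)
Your argument for (a) is correct and takes a somewhat different route from the paper's: the paper views $x\mapsto\{x,\phi(x)\}$ as a homotopically non-trivial loop in the projective plane $\mathcal{P}_*$ and uses that any two such loops must intersect (Proposition \ref{propthmCvolta} via Lemma \ref{lemcontinuousinvolution}), whereas you obtain the same intersection property by a direct intermediate-value computation on lifts to $\mathbb{R}$. Your approach has the small advantage of using only that $\phi$ is fixed-point free of degree one, without invoking the involution lemma. Part (b) is essentially the paper's Lemma \ref{lemthmCidafracaA}.

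The ``only if'' direction, however, has a genuine gap. Your proposed contradiction---moving the basepoint $x$ to $x'=\exp_x(\epsilon v^*)$ so that $d(x',y_1)<D$ and $d(x',y_2)<D$, then running the sweepout $\{x',q_t\}$---does not contradict $\mathcal{S}(\Gamma)=D$: by part (b) itself, the new point $x'$ still has some $y'\in\Gamma$ with $d(x',y')=D$, just possibly not $y_1$ or $y_2$. Hence $\max_t d(x',q_t)=D$ regardless, and nothing is gained. What your first-variation analysis actually establishes is only that each $\{x,y_i\}$ is a critical point of $\mathcal{D}$, which follows more cheaply from Corollary \ref{cormaximumiscritical} since $\{x,y_i\}$ is a global maximum. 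The paper's route is then immediate: apply Proposition \ref{propuniqu} directly to the two critical points $\{x,y_1\}$ and $\{x,y_2\}$ sharing the vertex $x$ to conclude $y_1=y_2$. You correctly suspect in your final paragraph that Proposition \ref{propuniqu} is needed, but it should replace your first-variation argument entirely rather than patch an edge case of it. Continuity of the resulting single-valued $\phi$ then follows exactly as you outline (and as in Proposition \ref{propthmCidafracaB}).
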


\indent The equality between the extrinsic and intrinsic diameters of a curve $\Gamma$, on the other hand, is easily characterised. In fact, $diam(\Gamma)=L(\Gamma)/2$ if and only if $\Gamma$ is a closed geodesic formed by two minimising geodesics of the same length $L(\Gamma)/2$, see Lemma \ref{lemdiamlength}.

	Finally, we characterise the equality between the three geometric invariants above as follows:
\begin{thmD}
	Let $\Gamma$ be a smoothly embedded circle in a complete Riemannian manifold. The following assertions are equivalent:
	\begin{itemize}
		\item[$i)$] $\mathcal{S}(\Gamma)=L(\Gamma)/2$.
		\item[$ii)$] For every $x$, $y\in \Gamma$, the distance between $x$ and $y$ equals the length of the shortest arc of $\Gamma$ bounded by these two points.
	\end{itemize}
\end{thmD}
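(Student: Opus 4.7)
The plan is to reduce both implications to elementary sweepout manipulations, using only the universal inequality $d(x,y) \le \ell(x,y)$, where $\ell(x,y)$ denotes the length of the shorter arc of $\Gamma$ bounded by $\{x,y\}$, together with the observation already employed in Section~\ref{introwidth}: every sweepout $\{p_t,q_t\}_{t\in[0,1]}$ passes through at least one intrinsically antipodal pair $\{p_{t^*},q_{t^*}\}$, i.e.\ one with $\ell(p_{t^*},q_{t^*}) = L(\Gamma)/2$.

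Direction $(ii)\Rightarrow(i)$ is nearly immediate. The inequality $\mathcal{S}(\Gamma) \le L(\Gamma)/2$ is noted in Section~\ref{introwidth}. Conversely, any sweepout contains an intrinsically antipodal pair $\{p_{t^*},q_{t^*}\}$, and $(ii)$ forces $d(p_{t^*},q_{t^*}) = L(\Gamma)/2$, so $\max_t d(p_t,q_t) \ge L(\Gamma)/2$, and taking the infimum over sweepouts gives $\mathcal{S}(\Gamma) \ge L(\Gamma)/2$.

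For $(i)\Rightarrow(ii)$ I argue contrapositively. Suppose there exist $x_0,y_0\in\Gamma$ with $d(x_0,y_0) < \ell(x_0,y_0)$. Parametrise $\Gamma$ by arc length $\gamma:\mathbb{R}/L\mathbb{Z}\to\Gamma$ so that $\gamma(0)=x_0$, $\gamma(s_0)=y_0$, and $s_0=\ell(x_0,y_0)\in(0,L/2]$. The crux of the argument is that the strict defect at $(x_0,y_0)$ propagates, via the triangle inequality, to the intrinsically antipodal pair:
\[
  d(\gamma(0),\gamma(L/2)) \;\le\; d(\gamma(0),\gamma(s_0)) + d(\gamma(s_0),\gamma(L/2)) \;<\; s_0 + \bigl(L/2 - s_0\bigr) \;=\; L/2,
\]
using $d(\gamma(0),\gamma(s_0))=d(x_0,y_0)<s_0$ and $d(\gamma(s_0),\gamma(L/2))\le \ell(\gamma(s_0),\gamma(L/2))\le L/2-s_0$. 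I then apply this to the canonical sliding sweepout $p_t=\gamma(0)$, $q_t=\gamma(tL)$, $C_t=\gamma([0,tL])$: for $t\ne 1/2$ one has $\ell(p_t,q_t)<L/2$ and hence $d(p_t,q_t)\le\ell(p_t,q_t)<L/2$, while at $t=1/2$ the displayed inequality gives $d(p_{1/2},q_{1/2})<L/2$. By compactness of $[0,1]$, $\max_t d(p_t,q_t) < L/2$, whence $\mathcal{S}(\Gamma) < L(\Gamma)/2$, contradicting $(i)$.

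I do not anticipate any serious obstacle: the argument is entirely elementary. The only real insight is the triangle inequality trick, which shows that a single local defect $d(x_0,y_0)<\ell(x_0,y_0)$ automatically produces a strict inequality $d<L/2$ at the unique point of the canonical sliding sweepout where $d=L(\Gamma)/2$ could otherwise be attained.
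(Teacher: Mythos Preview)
Your proof is correct and follows essentially the same approach as the paper. Both directions use the sliding sweepout $p_t=\gamma(0)$, $q_t=\gamma(tL)$: for $(ii)\Rightarrow(i)$ your argument is identical to the paper's, while for $(i)\Rightarrow(ii)$ the paper argues directly (via Lemma~\ref{lemthmCidafracaA}, itself the sliding sweepout argument) that every $x$ has a point at distance $L(\Gamma)/2$, hence the intrinsic antipode, hence both half-arcs are minimising and so is every sub-arc; your contrapositive version with the triangle inequality is simply the same observation read backwards, packaged without the intermediate lemma.
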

	
	In particular, under any of these conditions, $\Gamma$ is a geodesic such that any points $x$, $y\in \Gamma$ bounding arcs of $\Gamma$ of the same length lie at distance $d(x,y)=L(\Gamma)/2$. Moreover, the pair $\{x,y\}$ divides $\Gamma$ in two minimising geodesics. (See also Remark \ref{rmkjacobi}). 
	
	It is interesting to contrast the objects described in Theorem D to Riemannian fillings of the circle \cite{Gro1}. Recall that a Riemannian filling of the circle is a Riemannian surface $(M^2,g)$ with connected boundary such that
\begin{equation*}
	d_{(M,g)}(x,y)=d_{(\partial M,g_{|_{\partial M}})}(x,y) \quad \text{for all} \quad x,y\in \partial M.
\end{equation*}
In Section \ref{sectionexamples}, we describe several examples of fillings, see Example \ref{examfilling}.

	In our view, the comparison between the width and the boundary length of a curve is, to a certain extent, analogous to the comparison between the first Steklov eigenvalue and the boundary length of a compact surface \cite{GirPol}, or even to the isosistolic/isodiastolic inequalities investigated in \cite{Abbalii} and \cite{AmbMon}. Theorem D, in particular, characterises curves attaining the absolute maximum of the scaling invariant quotient $\mathcal{S}(\Gamma)/L(\Gamma)$. In contrast, no Riemannian surface with connected, strictly convex boundary is such that its boundary is a local maximum of $\mathcal{S}/L$, see Proposition \ref{propnocriticalSL}. 

	In Section \ref{introcomparison}, we discuss relations between the above three geometric invariants of $\Gamma$ and other min-max geometric quantities. Before doing that, let us derive an immediate consequence of the results of this section, regarding the set of critical points of $\mathcal{D}$.

\subsection{On the number of critical points} Let $\Gamma$ be a smoothly embedded circle in a complete Riemannian manifold. Combining Theorems A and C, we can confirm that the functional $\mathcal{D}$ on $\mathcal{P}_*$ enjoys similar variational properties to the smooth maps $\Phi:[v]\in \mathbb{RP}^2\mapsto \langle A(v),v\rangle/|v|^2 \in \mathbb{R}$, where $A:\mathbb{R}^3\rightarrow\mathbb{R}^3$ is a linear self-adjoint operator with one-dimensional kernel and non-negative eigenvalues. (\textit{Cf.} \cite{LusSch}, Chapter $4$, $\S 7$, example 2). 

\begin{thmE}
	Let $\Gamma$ be a smoothly embedded circle in a complete Riemannian manifold. Let $\mathcal{D}$ be the distance function on pairs of points of $\Gamma$. Then, there are two possibilities:
	\begin{enumerate}
		\item $0<\mathcal{S}(\Gamma)<diam(\Gamma)$, in which case $\mathcal{D}$ has at least two non-trivial critical points in $\mathcal{P}_*$.
		\item $\mathcal{S}(\Gamma)=diam(\Gamma)$, in which case $\mathcal{D}$ has infinitely many non-trivial critical points in $\mathcal{P}_*$.
	\end{enumerate}
	Assume, moreover, that $\Gamma$ is the boundary of a smoothly embedded totally convex disc. Then, in case $(2)$, the set of all non-trivial critical points of $\mathcal{D}$ is a continuously embedded, homotopically non-trivial circle in $\mathcal{P}_*$.
\end{thmE}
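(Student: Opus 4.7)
The plan is to argue the two cases first, then sharpen the picture under the convex-disc hypothesis. First I would establish that the global maximum of $\mathcal{D}$ on $\mathcal{P}_*$ is always a non-trivial critical point. The key input is the one-sided differentiability formula
$$\frac{d^{+}}{ds}\bigg|_{s=0}d\bigl(\psi_s(p),\psi_s(q)\bigr)=\min_{\gamma}\bigl(\langle v_2,\gamma'(a)\rangle-\langle v_1,\gamma'(0)\rangle\bigr),$$
where $\psi_s$ is the flow of a vector field $V$ tangent to $\Gamma$ with $V(p)=v_1$, $V(q)=v_2$, and the minimum runs over all minimising geodesics from $p$ to $q$. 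The $\le$ direction is immediate from \eqref{eqfirstvariation} applied to $\psi_s\circ\gamma$, while the $\ge$ direction follows from Arzel\`a--Ascoli compactness for minimising geodesics between $\psi_{s_k}(p)$ and $\psi_{s_k}(q)$. At the global maximum this one-sided derivative is $\le 0$ for every $(v_1,v_2)$; applying the same identity to $(-v_1,-v_2)$ then yields, for every $(v_1,v_2)$, some minimising geodesic with first variation $\ge 0$, which is exactly Definition~\ref{defcriticalpoints} of a critical point.

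With this in hand, case~(1) is immediate: Theorem~A gives a critical point at the level $\mathcal{S}(\Gamma)\in(0,\mathrm{diam}(\Gamma))$, and the global maximum is a distinct non-trivial critical point at level $\mathrm{diam}(\Gamma)$. In case~(2), Theorem~C(b) assigns to every $x\in\Gamma$ some $y(x)\in\Gamma$ with $d(x,y(x))=\mathrm{diam}(\Gamma)$, and each pair $\{x,y(x)\}$ is a non-trivial critical point by the previous step; since a single pair is hit by at most two values of $x$, this already produces uncountably (hence infinitely) many non-trivial critical points.

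Now assume the convex-disc hypothesis, still in case~(2). Theorem~C then upgrades the partner map to a continuous monotone homeomorphism $\phi\colon\Gamma\to\Gamma$ with $d(x,\phi(x))=\mathrm{diam}(\Gamma)$, which has no fixed points. I would invoke the uniqueness Proposition~\ref{propuniqu} to show $\phi$ is an involution: if $\phi(\phi(x))\ne x$, the point $\phi(x)$ would admit two distinct partners, namely $x$ and $\phi(\phi(x))$, both realising the diameter, contradicting uniqueness. The continuous map $\Phi\colon\Gamma\to\mathcal{P}_*$, $\Phi(x)=\{x,\phi(x)\}$, is then two-to-one with image $S$ a continuously embedded topological circle. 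Because every fixed-point-free involution of $S^1$ is conjugate to the antipodal map, $S$ is isotopic to the central circle of the M\"obius band $\mathcal{P}$, which is one-sided in $\mathcal{P}$ and remains so after the boundary collapse to $\mathcal{P}_*\cong\mathbb{RP}^2$; hence $S$ is homotopically non-trivial.

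The remaining, and hardest, step is to rule out non-trivial critical points outside $S$. My plan is to show that property $(\star)$ holds automatically in this setting, so that Theorem~B yields $d(p,q)\ge\mathcal{S}(\Gamma)=\mathrm{diam}(\Gamma)$ for every non-trivial critical point $\{p,q\}$, forcing $\{p,q\}\in S$. If $(\star)$ failed, a free boundary stable geodesic would bound a non-trivial pair that is a local minimum of $\mathcal{D}$ (using Proposition~\ref{proplocalminima}) at a level $L<\mathrm{diam}(\Gamma)$. A Lusternik--Schnirelmann-style construction--building a sweepout that goes \emph{around} this local minimum rather than through it--would then produce sweepouts with maximum strictly below $\mathrm{diam}(\Gamma)$, contradicting $\mathcal{S}(\Gamma)=\mathrm{diam}(\Gamma)$. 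Making this "sweepout around a local minimum" argument rigorous in the non-smooth setting of $\mathcal{D}$ is the main technical challenge; I expect to rely on the regular-point deformation implicit in Definition~\ref{defcriticalpoints} together with the fine local structure of critical configurations encoded by Propositions~\ref{propuniqu} and \ref{proplocalminima}.
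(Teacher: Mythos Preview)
Your treatment of cases (1) and (2) and the construction of the circle $S=\{\{x,\phi(x)\}:x\in\Gamma\}$ under the disc hypothesis are essentially the paper's argument (the paper packages the ``maximum is critical'' step as Corollary~\ref{cormaximumiscritical}, a direct consequence of the gradient-like flow, rather than via a one-sided derivative formula, but the content is the same).

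The genuine gap is in your final step, where you try to rule out non-trivial critical points outside $S$. You already have the tool in hand and do not use it: Proposition~\ref{propuniqu} says that a given point $p\in\partial\Omega$ belongs to \emph{at most one} non-trivial critical pair. Since $\{p,\phi(p)\}$ is always such a pair (it realises the diameter, hence is a global maximum, hence critical), any non-trivial critical point $\{p,q\}$ must satisfy $q=\phi(p)$. That is the entire argument in the paper; no appeal to property $(\star)$ or Theorem~B is needed.

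Your detour through $(\star)$ is not only unnecessary but flawed as written. You invoke Proposition~\ref{proplocalminima} backwards: that proposition says a non-trivial local minimum of $\mathcal{D}$ bounds a (unique) free boundary stable minimising geodesic, not that every free boundary stable geodesic has endpoints forming a local minimum of $\mathcal{D}$. A free boundary stable geodesic need not be minimising at all, so its length need not equal the distance between its endpoints, and there is no reason those endpoints should be a local minimum. The subsequent ``sweepout around a local minimum'' idea is therefore built on a mis-stated premise, and you yourself flag it as the main technical challenge; in fact it is entirely avoidable.
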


	The estimate on the minimal number of non-trivial critical points is sharp, as the ellipses show. 

\subsection{An application: involutive symmetry} We would like to illustrate how the concepts and methods introduced in this paper lead very naturally to generalisations of classical results about plane curves of constant width to a Riemannian setting.

	A very simple result about plane curves of constant width is that circles centred at the origin are the only such curves that are invariant by the involution $x \mapsto -x$. The generalisation we propose reads as follows:

\begin{thmF}
	Let $\Omega$ be a totally convex disc with smooth boundary in a complete Riemannian surface $(M^2,g)$. Assume that every two points of $\partial \Omega$ are joined by a unique geodesic.\\ 
	\indent If $(\Omega,g)$ admits an isometric involution with no fixed boundary points and
	\begin{equation*}
		\mathcal{S}(\partial \Omega)=diam(\partial \Omega),
	\end{equation*}
	then the involution has a unique fixed point $x_0\in \Omega$, $\mathcal{S}(\partial \Omega)/2$ is not bigger than the injectivity radius of $(M^2,g)$ at $x_0$, and $ \Omega$ is the geodesic ball of $(M^2,g)$ of diameter $\mathcal{S}(\partial \Omega)=diam(\partial \Omega)$ and center $x_0$.
\end{thmF}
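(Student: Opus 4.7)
The plan is to first locate the unique interior fixed point $x_0$ of $\sigma$ using Lefschetz theory, then combine the involutive symmetry with the equality $\mathcal{S}(\partial \Omega) = \mathrm{diam}(\partial \Omega)$ and the uniqueness of geodesics between boundary points to conclude that every boundary point lies at the same distance $\mathcal{S}(\partial \Omega)/2$ from $x_0$. This identifies $\Omega$ as the geodesic ball and yields the injectivity radius bound.

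For the first step, the restriction of $\sigma$ to $\partial \Omega \cong S^1$ is a fixed-point-free involution; since orientation-reversing involutions of the circle are reflections (hence have two fixed points), $\sigma$ must be orientation-preserving on $\bar{\Omega}$. As an orientation-preserving self-map of the closed disc it has Lefschetz number $1$, and each interior fixed point contributes local index $+1$: at such a point $d\sigma$ is an orientation-preserving isometric involution of the tangent plane, hence equals $-\mathrm{id}$, so $\det(I - d\sigma) = 4 > 0$. Therefore $\sigma$ has exactly one interior fixed point $x_0$, and $d\sigma_{x_0} = -\mathrm{id}$, so $\sigma$ agrees with the geodesic reflection at $x_0$ near that point.

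For each $y \in \partial \Omega$, the unique minimizing geodesic $\gamma : [0, D_y] \to \Omega$ from $y$ to $\sigma(y)$, where $D_y = d(y, \sigma(y))$, coincides with $t \mapsto \sigma(\gamma(D_y - t))$ by the uniqueness hypothesis, so its midpoint is fixed by $\sigma$ and must equal $x_0$. In particular $d(y, \sigma(y)) = 2 d(y, x_0)$. Setting $r(y) := d(y, x_0)$, this yields $r \leq \mathcal{S}(\partial \Omega)/2$, while Theorem~C(b) combined with the triangle inequality through $x_0$ shows that $\max r = \mathcal{S}(\partial \Omega)/2$. The central claim is that $r$ is constantly equal to $\mathcal{S}(\partial \Omega)/2 =: R$. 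To prove it, let $y^{**}$ achieve $r_0 := \min r$: by first variation, the minimizing geodesic from $x_0$ to $y^{**}$ is perpendicular to $\partial \Omega$ there, and $\sigma$-symmetry makes its extension through $x_0$ a free boundary minimizing geodesic of length $2 r_0$. Under the uniqueness hypothesis, $\{y^{**}, \sigma(y^{**})\}$ is therefore a critical point of $\mathcal{D}$ at value $2 r_0$, and invoking Theorem~B yields $2 r_0 \geq \mathcal{S}(\partial \Omega)$, whence $r \equiv R$.

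The main technical obstacle is verifying that property $(\star)$ actually holds under the hypotheses of Theorem F, so that Theorem~B can be applied: one must show that the combination of $\sigma$-symmetry and uniqueness of geodesics precludes any free boundary stable geodesic in $(\Omega, g)$. Granting this, the conclusion follows: if $R$ exceeded the injectivity radius of $M$ at $x_0$, two distinct unit vectors $v, v' \in T_{x_0}M$ would satisfy $\exp_{x_0}(Rv) = \exp_{x_0}(Rv') = y \in \partial \Omega$, and applying $\sigma$ (via $d\sigma_{x_0} = -\mathrm{id}$) the points $\exp_{x_0}(-Rv)$ and $\exp_{x_0}(-Rv')$ would both equal $\sigma(y)$, yielding two distinct minimizing geodesics from $y$ to $\sigma(y)$, a contradiction. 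Hence $R$ is at most the injectivity radius at $x_0$, the sphere $S(x_0, R)$ is a smoothly embedded circle, and since $\partial \Omega$ and $S(x_0, R)$ are both embedded circles with the former contained in the latter, they coincide; total convexity of $\Omega$ and the fact that $x_0 \in \Omega$ then force $\Omega = \bar{B}(x_0, R)$.
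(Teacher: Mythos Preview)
Your approach is largely parallel to the paper's --- the Lefschetz argument for the fixed point is a slick alternative to the paper's Brouwer-plus-Jordan-curve reasoning, and your observation that the unique geodesic from $y$ to $\sigma(y)$ is $\sigma$-invariant, hence passes through $x_0$ at its midpoint, establishes $d(y,\sigma(y))=2r(y)$ for every $y\in\partial\Omega$ quite efficiently. The gap is exactly the one you flag: you invoke Theorem~B to deduce $2r_0\ge\mathcal{S}(\partial\Omega)$, but Theorem~B requires property $(\star)$, which is not among the hypotheses of Theorem~F, and you do not prove it. ``Granting this'' is not an option here, since it is the crux of the matter.

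The paper's proof never appeals to $(\star)$ or to Theorem~B. Once one knows that $\{y^{**},\sigma(y^{**})\}$ is a nontrivial critical point of $\mathcal D$ (it bounds a unique geodesic, which is free boundary), one instead combines two facts that hold for \emph{any} totally convex disc with smooth boundary: Proposition~\ref{propthmCidafracaB}, which under $\mathcal{S}(\partial\Omega)=\operatorname{diam}(\partial\Omega)$ produces for each $p\in\partial\Omega$ a unique $\phi(p)$ with $d(p,\phi(p))=\operatorname{diam}(\partial\Omega)$ (so $\{p,\phi(p)\}$ is critical, being a maximum of $\mathcal D$), and Proposition~\ref{propuniqu}, which says a boundary point lies in at most one nontrivial critical pair. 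These force $\sigma(y^{**})=\phi(y^{**})$ and hence $2r_0=d(y^{**},\sigma(y^{**}))=\operatorname{diam}(\partial\Omega)=\mathcal{S}(\partial\Omega)$ directly. With this replacement your argument goes through; the detour via Theorem~B is not only unverified but unnecessary.
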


	The assumption on the uniqueness of geodesics is satisfied by convex discs in Cartan-Hadamard surfaces and by convex discs in a hemisphere of an Euclidean sphere, for instance. The converse of Theorem F is discussed in Example \ref{examthmF} in Section 8. It is interesting to observe that there are discs satisfying all the hypotheses of Theorem F, except the property that every two points of $\partial \Omega$ are joined by a unique geodesic, and which are not rotationally symmetric, nor geodesic balls, see Example \ref{exammodifiedellipsoid} in Section \ref{sectionexamples}.
	
	On the other hand, there are plane curves of constant width with reflection symmetries. There are also plane curves of constant width without any non-trivial symmetries at all \cite{Fil}. In other words, among Riemannian discs with convex boundary such that the width of the boundary equals the diameter of the boundary, those described by Theorem F form a special class. 

\subsection{Comparison between min-max invariants} \label{introcomparison} Let $(\Omega,g)$ be a totally convex disc with smooth boundary in a complete Riemannian surface. We proved that if $\Omega$ is a convex plane region, then $\mathcal{S}(\partial \Omega)$ coincides with the width of the boundary curve in the sense of the narrowest slab containing this curve, see Corollary \ref{corcoincidencewidths}. There are other min-max quantities that also generalise the classical notion in this sense. Let us describe two of them.

	A min-max construction to prove the existence of free boundary geodesics was considered by Xin Zhou in \cite{Zho}. Define the number
\begin{equation*}
    E_{\ast} = \inf_{\{\alpha_t\}}\,\max_{t\in [0,1]}\, \int_{0}^1 |\alpha_t^{\prime}(u)|^2du,
\end{equation*}
where $\{\alpha_t\}$ is a continuous path of $W^{1,2}$ maps $\alpha_t : [0,1] \rightarrow{\Omega}$ such that
\begin{itemize}
    \item[$i)$] the extremities $\alpha_t(0)$ and $\alpha_t(1)$ belong to $\partial \Omega$;
    \item[$ii)$] the map $(t,u)\in [0,1]^2 \mapsto \alpha_t(u)\in \Omega$ is continuous;
    \item[$iii)$] the maps $\alpha_0$ and $\alpha_1$ are constant maps; and
    \item[$iv)$] $\{\alpha_t\}$ is homotopic to a fixed path $\{\overline{\alpha}_t\}$ with all the above properties.
\end{itemize}
	
	Zhou proved that, if $E_{\ast}>0$, then $E_*$ is realised as the energy of a free boundary geodesic $\gamma : [0,1] \rightarrow \Omega$. In particular, its length $L(\gamma) = \sqrt{E_{\ast}}$ can be regarded as a min-max invariant $w_{\ast}$, which is a critical value of the length functional. In the same article, Zhou also considered the more general setting in which $\partial \Omega$ is replaced by a closed submanifold of general dimension and codimension, and there is no convexity assumptions. (For related work, see also \cite{GluZil}, \cite{Wei}, \cite{NabRot} and \cite{MLi}).
	
	The free boundary setting of the Almgren-Pitts min-max theory for curves on surfaces was investigated by Donato \cite{Don} and Donato and the second named author \cite{DonMon}. Consider the min-max invariant
\begin{equation*}
    \omega(\Omega,g) = \inf_{\{c_t\}} \sup_{t\in [0,1]} L(c_t),
\end{equation*}
where the infimum is considered over the paths $\{c_t\}$ of relative flat cycles modulo 2, which are homotopically non-trivial loops (with no base point fixed) in the space of relative cycles modulo 2. This definition allows for more sweepouts of $\Omega$ than those considered in \cite{Zho}. For instance, $c_t$ can be a network of curves with extremities that lie either in $\partial \Omega$ or at interior junctions at which an even number of curves meet. In particular, $c_t$ could be a closed embedded curve. It is proven in \cite{DonMon} that, if the Gaussian curvature of $(\Omega,g)$ is non-negative and its boundary is strictly convex, then $\omega(\Omega,g)$ is realised either as the length of a free boundary geodesic, or as the length of a geodesic loop whose vertex lies at the boundary of $\Omega$. (See \cite{DonMon}, Theorem 1.1). Moreover, while $\omega(\Omega, g) < L(\partial \Omega)$ always hold, examples show that $\omega(\Omega,g)$ can be arbitrarily close to the boundary length (see \cite{DonMon}, Section 6).

	When $\Omega$ is a strictly convex planar domain, all the three numbers $\mathcal{S}(\partial \Omega)$, $\omega(\Omega,euc)$ and $w_*$ coincide. If the Gaussian curvature of $\Omega$ is non-negative and its boundary is strictly convex, then $\mathcal{S}(\partial \Omega)\leq \omega(\Omega, g) \leq w_{\ast}$. If moreover $\mathcal{S}(\partial \Omega) = \omega(\Omega, g)$, then $\mathcal{S}(\partial \Omega)=w_*$ as well, and these three numbers coincide with the length of some free boundary minimising geodesic. On the other hand, there are examples of such discs for which
\begin{equation}\label{equation-widths-different}
    \mathcal{S}(\partial \Omega) < \frac{L(\partial\Omega)}{2} < \omega(\Omega, g) < L(\partial \Omega) < w_{\ast},
\end{equation}
see Figure 1. The justification for all these assertions is given in Section \ref{sectioncompare}.

\begin{figure}[htp]
    \centering
    \includegraphics[width=7cm]{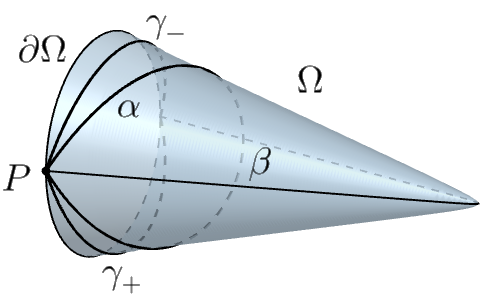}
    \caption{The curves realising different min-max widths.}
    \label{Figura-loop}
\end{figure}
	
	These assertions show that, among these min-max invariants, $\mathcal{S}(\partial \Omega)$ is the only one that is a suitable generalisation of the width of convex plane curves for the formulation of results like those in discussed above in Subsection 1.7.

\subsection{Perspectives} It would be interesting to know whether some refinement of the min-max Theorems A and B could prove that smoothly embedded totally convex discs always contain either a free boundary minimising geodesic with free boundary index at most one, or a pair of minimising geodesics, with extremities on the boundary, that are simultaneously stationary. (\textit{Cf}. Corollary \ref{corexistence}).

	It would also be interesting to see a systematic development of the Morse-Lusternik-Schnirelman theory of the distance function between pairs of points of compact embedded submanifolds of Riemannian manifolds, and a systematic investigation of the geometric meaning of the width-like invariants appearing in such theory in comparison to other geometric invariants.

	We believe that the methods of this paper can be generalised to the functional that computes the area of solutions to the Plateau problem for a given Jordan curve in a sphere inside a Riemannian three-manifold. We intend to describe this generalisation in a future work.

\subsection{Plan for the paper} In Section \ref{sectionbasicproperties}, we develop the basic theory of regular and critical points of the distance functional $\mathcal{D}:\mathcal{P}_*\mapsto [0,+\infty)$ along the lines of \cite{GroShi} and \cite{Gro}. (See also the excellent exposition of this theory by W. Meyer \cite{Mey}. We point out that a related, but different notion of critical pairs of points of the distance function has been investigated in \cite{Ald}). In particular, we prove the existence of gradient-like flows on the set of regular points (Proposition \ref{propgradientlikeflow}). We also prove that critical points satisfy special properties when the embedded circle bounds a totally convex smoothly embedded disc, see in particular Proposition \ref{propuniqu} and \ref{proplocalminima}.

	Building on these preliminary results, we prove in Section \ref{sectiongeneralminmaxtheorem} the basic min-max theorem asserting that the width $\mathcal{S}$ is a critical value of $\mathcal{D}$ (Theorem \ref{thmAbis}).
	
	The basic min-max Theorem A is improved in Section \ref{sectiondiscs}, under the assumption that the circle is the boundary of a totally convex disc. The key argument is to show that, under the extra assumption that no free boundary stable minimising geodesic exist in the disc, every critical point belongs to a sweepout restricted to which the functional attains a strict maximum exactly at this critical point, and moreover this sweepout is ``monotone" near this critical point (Propositions \ref{propstarcritical1} and \ref{propstarcritical2}). Birkhoff's curve shortening process, adapted to curves with boundary as in \cite{GluZil}, \cite{Zho} and \cite{DonMon}, is used here as a convenient technical device. This leads to Theorem B (Theorem \ref{thmBbis}).
	
	Section \ref{sectionzoll} contains the characterisations of the equalities between the width and the diameter (Theorem \ref{thmCbis}), the diameter and half the length (Lemma \ref{lemdiamlength}), and the width and half the length of a smoothly embedded circle (Theorem \ref{thmDbis}). The combination of Theorem A and Theorem C yields the proof of the sharp estimate for the number of critical points of $\mathcal{D}$, Theorem E, which is explained at the end of Section \ref{sectionzoll}. 
	
	Section \ref{sectioninvolutive} is dedicated to the proof of Theorem F (Theorem \ref{thmFbis}), and Section \ref{sectioncompare} to the comparison between min-max quantities associated to Riemannian discs with non-negative Gaussian curvature and strictly convex boundary. Finally, Section \ref{sectionexamples} contains the several examples which illustrate different aspects of what was discussed in the rest of the paper. 
	
\section{Basic properties of regular and critical points} \label{sectionbasicproperties}

	Let $\Gamma$ be a smoothly embedded circle in a complete Riemannian manifold $(M^n,g)$. We denote by $\mathcal{P}$ the set consisting of subsets $\{x,y\}$ of $\Gamma$ with at most two points, endowed with the obvious topology with respect to which $\{x_k,y_k\}\rightarrow \{x,y\}$ if and only if $x_{k}\rightarrow x$ and $y_{k}\rightarrow y$. Recall that $\mathcal{P}$ is a smooth surface with boundary $\partial \mathcal{P}=\{\{p\}\in \mathcal{P}\,|\,p\in \Gamma\}$, diffeomorphic to a Moebius band. We identify $\mathcal{X}\in T_{\{p,q\}}\mathcal{P}$ with sets $\{\mathcal{X}(p),\mathcal{X}(q)\}$ where $\mathcal{X}(p)\in T_p \Gamma$ and $\mathcal{X}(q)\in T_q \Gamma$. The surface $\mathcal{P}\setminus \partial \mathcal{P}$ inherits a Riemannian metric so that $\langle \mathcal{X}, \mathcal{Y} \rangle(\{p,q\})= \langle \mathcal{X}(p),\mathcal{Y}(p)\rangle + \langle \mathcal{X}(q),\mathcal{Y}(q)\rangle$.
	
	The quotient of $\mathcal{P}$ by the identification of all boundary points as a single point $0_*=[\{x,x\}]$ is a projective plane that we denote $\mathcal{P}_*$. \\
\indent We assume non-trivial geodesics are parametrized by arc length. When $\gamma$ joins $p$ and $q\in \Gamma$, the orthogonal projections of outward-pointing conormal vectors of $\gamma$ onto $T\Gamma$ define an element $\nu^T_\gamma=\{\nu_\gamma^{T}(p),\nu_\gamma^T(q)\}$ of $T\mathcal{P}$. If $\gamma:[0,a]\rightarrow M$ is such that $\gamma(0)=p$ and $\gamma(a)=q$, then
\begin{equation*}
	\nu_\gamma(p)=-\gamma'(0) \quad \text{and} \quad \nu_\gamma(q)=\gamma'(a).
\end{equation*}

	Denote by $d(x,y)$ the Riemannian distance between points $x$ and $y$ in $(M^n,g)$. We collect below some basic lemmas concerning the ``Morse Theory" of the distance functional,
\begin{equation*}
	D : \{p,q\}\in \mathcal{P} \mapsto d(p,q)\in [0,+\infty),
\end{equation*}
developed according to the notions of regular and critical points introduced in Definition \ref{defcriticalpoints}.

	We begin with the statement of a simple compactness result.
\begin{lemm}\label{lemmabasiccompactness}
	If $\{p_i,q_i\}\subset \Gamma$ converges to $\{p,q\}\subset \Gamma$, then any sequence of minimising geodesics $\gamma_i$ joining $p_i$ and $q_i$ has a converging subsequence to a minimising geodesic $\gamma$ joining $p$ and $q$.
\end{lemm}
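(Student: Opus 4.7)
The plan is to use the standard fact that a geodesic in a complete Riemannian manifold is determined by its initial point and initial velocity, so compactness reduces to controlling these initial conditions.

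First I would dispose of the degenerate case $p=q$. Since $D$ is continuous, $d(p_i,q_i)\to d(p,q)=0$, so the lengths $L(\gamma_i)$ tend to zero, and the $\gamma_i$ (which are either constant or segments of vanishing length) converge to the trivial geodesic at $p=q$. Assume from now on that $p\neq q$, and set $a_i:=L(\gamma_i)=d(p_i,q_i)$ and $a:=d(p,q)>0$; again by continuity of the distance function, $a_i\to a$, and in particular $p_i\neq q_i$ for all sufficiently large $i$. Write each unit-speed minimising geodesic as $\gamma_i(s)=\exp_{p_i}(sv_i)$ with $v_i=\gamma_i'(0)\in T_{p_i}M$, $|v_i|=1$.

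Next I would extract a convergent subsequence of the initial velocities. Since $p_i\to p$, all $p_i$ eventually lie in some compact coordinate neighbourhood of $p$, and in a local trivialisation of the unit sphere bundle $S(TM)$ over this neighbourhood, the sequence $(p_i,v_i)$ lies in a compact set. Pass to a subsequence along which $(p_i,v_i)\to (p,v)$ for some unit $v\in T_pM$. By smoothness of the exponential map in base point and tangent vector, and by completeness of $(M,g)$ (which ensures all geodesics are defined for all time), the curves $\gamma_i$, extended to $\mathbb{R}$ if needed, converge in $C^{\infty}_{\mathrm{loc}}$ to the geodesic $\gamma(s)=\exp_p(sv)$. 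In particular, restricting to the domains $[0,a_i]\to[0,a]$, we obtain $\gamma_i\to\gamma$ uniformly on $[0,a]$ (and with convergence of derivatives).

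Finally I would verify the limit has the required properties: by construction $\gamma(0)=p$, and
\begin{equation*}
\gamma(a)=\lim_{i\to\infty}\gamma_i(a_i)=\lim_{i\to\infty}q_i=q,
\end{equation*}
while $L(\gamma)=a=d(p,q)$, so $\gamma$ is a minimising geodesic from $p$ to $q$.

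The only real obstacle is the mild technical point that the initial vectors $v_i$ live in different tangent spaces $T_{p_i}M$, so ``convergence'' has to be interpreted in a local trivialisation of $TM$ near $p$; this is routine. Everything else follows from continuity of the distance, continuity of the exponential map, and geodesic completeness.
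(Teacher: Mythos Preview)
Your argument is correct and is the standard proof of this basic compactness fact. The paper itself states the lemma without proof, introducing it only as ``a simple compactness result'', so there is no alternative approach to compare against; your write-up supplies exactly the routine argument the authors have in mind. One very small remark: since $\{p_i,q_i\}$ is an unordered pair, strictly speaking you should first pass to a subsequence and fix a labelling so that $p_i\to p$ and $q_i\to q$ before writing $\gamma_i(0)=p_i$ and $\gamma_i(a_i)=q_i$; this is harmless but worth saying explicitly.
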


	Recall that regular points $\{x,y\}\subset \Gamma$ of $\mathcal{D}$ are such that $x\neq y$.  It is not difficult to check that if $x$ and $y$ are different points of $\Gamma$ that lie in some geodesically convex ball of $(M^n,g)$ and, moreover, bound a very small arc of $\Gamma$, then $\{x,y\}\subset \Gamma$ is a regular point of $\mathcal{D}$. Thus, there exists $\varepsilon>0$ such that no critical point of $\mathcal{D}$ exists on $\{\{x,y\}\in \mathcal{P}\,|\,0<d(x,y)<\varepsilon\}$.

\begin{lemm}\label{lemmopenandtheta}
	Suppose $\{p,q\}\subset \Gamma$ is a regular point of $\mathcal{D}$. Then, there exist disjoint open neighbourhoods $U_1$ of $p$ and $U_2$ of $q$ in $\Gamma$ with the following property: there exist $\theta>0$ and vector fields $v_1$ on $U_1$ and $v_2$ on $U_2$, that are tangent to $\Gamma$, such that
	\begin{equation}\label{eqregularcomtheta}
		 \langle v_1(x),\nu_\gamma^{T}(x)\rangle + \langle v_2(y),\nu_\gamma^{T}(y)\rangle \leq - \theta 
	\end{equation} 
	for every minimising geodesic $\gamma$ joining a pair $\{x,y\}$ with $x\in U_1$ and $y\in U_2$. 
\end{lemm}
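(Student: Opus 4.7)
The strategy is a compactness-and-contradiction argument based on Lemma \ref{lemmabasiccompactness}. Since $\{p,q\}$ is regular, necessarily $p \neq q$ (all singletons are critical), so I fix disjoint open arcs $U_1 \ni p$ and $U_2 \ni q$ in $\Gamma$. Let $(v_1^0,v_2^0) \in T_p\Gamma \times T_q\Gamma$ be the vector furnished by Definition \ref{defcriticalpoints}. Unwinding the conventions $\nu_\gamma(p)=-\gamma'(0)$ and $\nu_\gamma(q)=\gamma'(a)$, the condition on $(v_1^0,v_2^0)$ reads
\[
\langle v_1^0, \nu_{\gamma}^T(p)\rangle + \langle v_2^0, \nu_{\gamma}^T(q)\rangle < 0
\]
for every minimising geodesic $\gamma$ from $p$ to $q$. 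I then extend $v_i^0$ to smooth tangent vector fields $v_1$ on $U_1$ and $v_2$ on $U_2$, for instance by choosing the coefficient with respect to a smooth unit tangent of $\Gamma$ to be any smooth function with the prescribed value at $p$ (respectively $q$).

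Next, I argue by contradiction. Suppose no $\theta>0$ yields \eqref{eqregularcomtheta} after any shrinking of $U_1, U_2$. Then for each $n$ I can find $x_n \in U_1$ and $y_n \in U_2$ with $x_n \to p$ and $y_n \to q$, together with a minimising geodesic $\gamma_n$ from $x_n$ to $y_n$ satisfying
\[
\langle v_1(x_n), \nu_{\gamma_n}^T(x_n)\rangle + \langle v_2(y_n), \nu_{\gamma_n}^T(y_n)\rangle > -\tfrac{1}{n}.
\]
By Lemma \ref{lemmabasiccompactness}, a subsequence of $\{\gamma_n\}$ converges to a minimising geodesic $\gamma$ from $p$ to $q$; in particular the initial and terminal unit tangents converge, so $\nu_{\gamma_n}^T(x_n) \to \nu_{\gamma}^T(p)$ and $\nu_{\gamma_n}^T(y_n) \to \nu_{\gamma}^T(q)$. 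Using continuity of $v_1$ and $v_2$, I pass to the limit to obtain
\[
\langle v_1^0, \nu_{\gamma}^T(p)\rangle + \langle v_2^0, \nu_{\gamma}^T(q)\rangle \geq 0,
\]
which contradicts the defining inequality for $(v_1^0, v_2^0)$ applied to this particular $\gamma$.

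The main subtlety I anticipate is handling the \emph{uniformity} of the estimate over all minimising geodesics joining nearby pairs: each pair $\{x,y\}$ can in principle admit several minimising geodesics, not necessarily close to any previously considered one. This is precisely where the contradiction argument pays off, because any hypothetical family of counterexamples produces, via Lemma \ref{lemmabasiccompactness}, a limit geodesic that still joins $p$ to $q$, and the single witness $(v_1^0,v_2^0)$ beats \emph{every} minimising geodesic from $p$ to $q$ simultaneously by the regularity hypothesis. The remaining ingredients are routine continuity of the geodesic equation with respect to endpoints and of the extended vector fields.
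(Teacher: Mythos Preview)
Your argument is correct and follows essentially the same approach as the paper: extend the witness vector $(v_1^0,v_2^0)$ to tangent vector fields on disjoint arcs, then use the compactness of minimising geodesics (Lemma \ref{lemmabasiccompactness}) together with continuity to propagate the strict inequality to a uniform bound on a small neighbourhood. The only cosmetic difference is that the paper first uses compactness of the set of minimising geodesics joining $p$ and $q$ to get a uniform bound $\leq -2\theta$ at $\{p,q\}$ itself and then argues directly that the bound $\leq -\theta$ persists nearby, whereas you fold both steps into a single contradiction argument.
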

\begin{proof}
	By Definition \ref{defcriticalpoints} and the compactness of minimising geodesics bounding the regular point $\{p,q\}$ (Lemma \ref{lemmabasiccompactness}), it follows that there exists a number $\theta>0$ and a vector $(v_1,v_2)\in T_p\Gamma\times T_q\Gamma$ such that 
	\begin{equation*}
		 \langle v_1,\nu_\gamma^{T}(p)\rangle + \langle v_2,\nu_\gamma^{T}(q)\rangle \leq - 2\theta 
	\end{equation*} 
	for every minimising geodesic $\gamma$ joining $p$ and $q$.  Extend $v_1$ and $v_2$ smoothly and tangentially to $\Gamma$ in some small neighbourhood of $p$ and $q$. (Notice that $p\neq q$, so that there is no ambiguity). Then, using again Lemma \ref{lemmabasiccompactness}, possibly after taking smaller neighbourhoods, we find an open neighbourhood of $\{p,q\}$ in $\mathcal{P}$ in such way that the vector fields $v_1$ and $v_2$ satisfy
		\begin{equation*}
		  \langle v_1(x),\nu_\gamma^{T}(x)\rangle + \langle v_2(y),\nu_\gamma^{T}(y)\rangle \leq - \theta 
	\end{equation*} 
	for all $\{x,y\}$ in this neighbourhood of $\{p,q\}$ in $\mathcal{P}$ and for all minimising geodesics $\gamma$ joining such pairs $\{x,y\}$. 
\end{proof}

\begin{cor}\label{corcriticalcompactness}
	The critical points of $\mathcal{D}$ form a compact subset of $\mathcal{P}$.
\end{cor}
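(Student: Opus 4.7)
The plan is to show that the set of regular points of $\mathcal{D}$ is an open subset of $\mathcal{P}$, and then conclude compactness of the critical set from the compactness of the ambient space $\mathcal{P}$ (which is a closed M\"obius band).

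First I would recall that $\mathcal{P}$ is itself compact, so the desired conclusion is equivalent to showing that the set of critical points is closed in $\mathcal{P}$, \emph{i.e.}, that the complement — the set of regular points — is open. Let $\{p,q\}\subset \Gamma$ be a regular point; by definition, $p\neq q$. I would then invoke Lemma \ref{lemmopenandtheta} directly: it produces disjoint open neighbourhoods $U_1\ni p$ and $U_2\ni q$ in $\Gamma$, a constant $\theta>0$, and vector fields $v_1$ on $U_1$ and $v_2$ on $U_2$ tangent to $\Gamma$ such that the inequality \eqref{eqregularcomtheta} holds along every minimising geodesic $\gamma$ joining any pair $\{x,y\}$ with $x\in U_1$, $y\in U_2$.

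Given such data, I would fix an arbitrary pair $\{x,y\}\subset \Gamma$ with $x\in U_1$ and $y\in U_2$; since $U_1\cap U_2=\emptyset$ we have $x\neq y$. Setting $V_1=v_1(x)\in T_x\Gamma$ and $V_2=v_2(y)\in T_y\Gamma$, the estimate \eqref{eqregularcomtheta}, rewritten via the identification $\nu_\gamma^T(x)=-(\gamma'(0))^T$ and $\nu_\gamma^T(y)=(\gamma'(a))^T$ for a minimising geodesic $\gamma:[0,a]\to M$ from $x$ to $y$, yields
\begin{equation*}
\langle V_2,\gamma'(a)\rangle-\langle V_1,\gamma'(0)\rangle \;\leq\; -\theta\;<\;0
\end{equation*}
for every such minimising $\gamma$. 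This is precisely the condition in Definition \ref{defcriticalpoints} witnessing that $\{x,y\}$ is a regular point of $\mathcal{D}$. Hence the open neighbourhood $\{\{x,y\}\in\mathcal{P}:x\in U_1,\,y\in U_2\}$ of $\{p,q\}$ in $\mathcal{P}$ consists entirely of regular points.

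Since $\{p,q\}$ was arbitrary, the set of regular points is open in $\mathcal{P}$, so the set of critical points is closed in the compact space $\mathcal{P}$, and therefore compact. The one point that requires a small check — and which is the only mild obstacle — is matching the sign conventions between \eqref{eqfirstvariation} and \eqref{eqregularcomtheta}; this is handled by the explicit formulas $\nu_\gamma(p)=-\gamma'(0)$ and $\nu_\gamma(q)=\gamma'(a)$ recorded just before Lemma \ref{lemmabasiccompactness}.
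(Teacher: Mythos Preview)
Your proposal is correct and follows exactly the same approach as the paper: invoke Lemma~\ref{lemmopenandtheta} to see that the set of regular points is open, and conclude that the critical set is closed in the compact space $\mathcal{P}$. The paper's proof is a one-liner to this effect; your version simply spells out the details (including the sign check), but there is no substantive difference.
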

\begin{proof}
	In fact, by the Lemma \ref{lemmopenandtheta}, its complement (the set of regular points) is an open subset of $\mathcal{P}$.
\end{proof}

\indent From now on, we denote by $\mathcal{U}$ the set of regular points of $\mathcal{D}$ in $\mathcal{P}$. 

\begin{lemm}\label{lemquasigradient}
	Let $\mathcal{U}\subset \mathcal{P}$ be the open subset of regular points of $\mathcal{D}$. There exists a unit vector field $\mathcal{X}\in T\mathcal{U}$ such that $\langle \mathcal{X},\nu_\gamma^T\rangle < 0$ for every minimising geodesic $\gamma$ joining a pair of points $\{x,y\}\in\mathcal{U}$.
\end{lemm}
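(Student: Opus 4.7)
The plan is to globalize Lemma~\ref{lemmopenandtheta} using a partition of unity on $\mathcal{U}$, exploiting the fact that the condition $\langle\cdot,\nu_\gamma^T\rangle<0$ is linear in its first argument, and in particular is preserved under convex combinations of vector fields.

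More precisely, for each $\{p,q\}\in\mathcal{U}$ I would invoke Lemma~\ref{lemmopenandtheta} to obtain disjoint open neighborhoods $U_1^{pq}$ of $p$ and $U_2^{pq}$ of $q$ in $\Gamma$, tangential vector fields $v_1^{pq}$ on $U_1^{pq}$ and $v_2^{pq}$ on $U_2^{pq}$, and a constant $\theta_{pq}>0$ realizing inequality \eqref{eqregularcomtheta}. The sets $W_{pq}:=\{\{x,y\}\in\mathcal{P}\,|\,x\in U_1^{pq},\,y\in U_2^{pq}\}$ form an open cover of $\mathcal{U}$. Since $\mathcal{U}$ is an open subset of the smooth surface $\mathcal{P}$, it is paracompact, so there exists a locally finite refinement $\{W_\alpha\}_\alpha$ with associated data $(U_1^\alpha,U_2^\alpha,v_1^\alpha,v_2^\alpha,\theta_\alpha)$ and a continuous subordinate partition of unity $\{\rho_\alpha\}$. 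I would then define
\begin{equation*}
\widetilde{\mathcal{X}}(\{x,y\})=\sum_{\alpha}\rho_\alpha(\{x,y\})\,\{v_1^\alpha(x),\,v_2^\alpha(y)\},
\end{equation*}
where for each patch $\alpha$ containing $\{x,y\}$ the disjointness of $U_1^\alpha$ and $U_2^\alpha$ unambiguously dictates which of $x$, $y$ gets plugged into which factor. For any minimising geodesic $\gamma$ joining $x$ and $y$, inequality \eqref{eqregularcomtheta} gives
\begin{equation*}
\langle\widetilde{\mathcal{X}}(\{x,y\}),\nu_\gamma^{T}\rangle=\sum_\alpha \rho_\alpha\bigl(\langle v_1^\alpha(x),\nu_\gamma^T(x)\rangle+\langle v_2^\alpha(y),\nu_\gamma^T(y)\rangle\bigr)\leq -\sum_\alpha\rho_\alpha\,\theta_\alpha<0,
\end{equation*}
since $\sum_\alpha\rho_\alpha=1$ and every $\theta_\alpha>0$. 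In particular $\widetilde{\mathcal{X}}$ is nowhere zero on $\mathcal{U}$, so setting $\mathcal{X}=\widetilde{\mathcal{X}}/|\widetilde{\mathcal{X}}|$ yields the desired unit vector field.

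The only mildly delicate point — which I would not call an obstacle so much as a bookkeeping subtlety — is the consistency of the symmetric-product description of elements of $T\mathcal{P}$ under the gluing. This is handled precisely by the disjointness of $U_1^\alpha$ from $U_2^\alpha$ guaranteed by Lemma~\ref{lemmopenandtheta}: on each patch there is a canonical bijection between $\{x,y\}$ and the ordered pair $(x,y)\in U_1^\alpha\times U_2^\alpha$, so the contribution $\{v_1^\alpha(x),v_2^\alpha(y)\}$ is well-defined as an element of $T_{\{x,y\}}\mathcal{P}$ independently of the ordering used to represent the set.
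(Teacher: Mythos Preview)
Your proposal is correct and follows essentially the same approach as the paper: use Lemma~\ref{lemmopenandtheta} to get local vector fields, glue them with a partition of unity subordinate to a locally finite cover of~$\mathcal{U}$ (exploiting that the strict inequality is preserved under convex combinations), and then normalise. Your write-up is in fact more detailed than the paper's, which dispatches the argument in a single paragraph; the bookkeeping remark about disjointness of $U_1^\alpha$ and $U_2^\alpha$ making the symmetric-product description unambiguous is a nice clarification that the paper leaves implicit.
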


\begin{proof}
	In a finite dimensional vector space with an inner product, linear combinations $w=\lambda_1w_1+\ldots+\lambda_kw_k$ with $\lambda_i\in[0,1]$ and $\sum\lambda_i=1$ of vectors $w_i$ that satisfy $\langle w_i,v \rangle < 0$ for all $i$ also satisfy $\langle w,v\rangle<0$. Thus, we may glue the vector fields $V$ constructed in Lemma \ref{lemmopenandtheta} together, by means of a partition of unity subordinated to some locally finite cover of $\mathcal{U}$ in $\mathcal{P}$ consisting of open sets as in Lemma \ref{lemmopenandtheta}, and normalise the resulting vector field on $\mathcal{U}$ so to obtain a unit vector field on $\mathcal{U}$ with the desired properties.
\end{proof}

\begin{prop}\label{propgradientlikeflow}
	Let $\mathcal{X}$ be a unit vector field on $\mathcal{U}\subset \mathcal{P}$ as in Lemma \ref{lemquasigradient} and let $\mathcal{K}$ be a compact subset of $\mathcal{U}$. Denote by $\phi_t$ the flow of $\mathcal{X}$ starting at points of $\mathcal{U}$. Then, there exists $\theta>0$ such that, for every pair of points $\{x,y\}\in \mathcal{K}$,
	\begin{equation*}
		\mathcal{D}(\phi_t(\{x,y\})) \leq d(x,y)-\theta t
	\end{equation*}	    
	 and
	\begin{equation*}
		\mathcal{D}(\phi_{-t}(\{x,y\}) \geq d(x,y)+\theta t
	\end{equation*}	    
as long as the flow starting at $\{x,y\}\in \mathcal{K}$ exists in $\mathcal{K}$.
\end{prop}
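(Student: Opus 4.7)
The plan is to extract a uniform positive constant from a compactness argument, bound the upper right Dini derivative of $s\mapsto D(s):=\mathcal{D}(\phi_s(\{x,y\}))$ by means of the first variation formula, and then integrate via a standard real analysis lemma.

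First I would show that there exists $\theta>0$ such that
\begin{equation*}
    \langle \mathcal{X}(\{p,q\}),\nu_\gamma^T\rangle \leq -2\theta
\end{equation*}
holds uniformly over all $\{p,q\}\in \mathcal{K}$ and all minimising geodesics $\gamma$ joining $p$ to $q$. Indeed, the set of such pairs $(\{p,q\},\gamma)$ is sequentially compact in a natural topology: any sequence $\{p_i,q_i\}\to \{p,q\}$ in $\mathcal{K}$ together with minimising geodesics $\gamma_i$ between them admits, by Lemma \ref{lemmabasiccompactness}, a subsequence whose geodesics converge to a minimising geodesic joining $p$ to $q$. Since the function $(\{p,q\},\gamma)\mapsto \langle \mathcal{X}(\{p,q\}),\nu_\gamma^T\rangle$ is continuous on this set and strictly negative by the defining property of $\mathcal{X}$ from Lemma \ref{lemquasigradient}, the uniform upper bound $-2\theta$ follows.

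Next I would establish the Dini bound. Fix $s_0$ with $\phi_{s_0}(\{x,y\})\in \mathcal{K}$, write $\{x_s,y_s\}:=\phi_s(\{x,y\})$, and let $\gamma:[0,a]\to M$ be any minimising geodesic from $x_{s_0}$ to $y_{s_0}$, so $L(\gamma)=D(s_0)$. In suitable local coordinates around $\gamma$, I construct for $|h|$ small a smooth family $c_h:[0,a]\to M$ with $c_0=\gamma$, $c_h(0)=x_{s_0+h}$ and $c_h(a)=y_{s_0+h}$, chosen so that the variational vector field $V$ along $\gamma$ at $h=0$ coincides with $\mathcal{X}(\{x_{s_0},y_{s_0}\})(x_{s_0})$ at $0$ and with $\mathcal{X}(\{x_{s_0},y_{s_0}\})(y_{s_0})$ at $a$. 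The first variation formula \eqref{eqfirstvariation} then gives
\begin{equation*}
    L(c_h) = D(s_0) + h\langle \mathcal{X}(\phi_{s_0}(\{x,y\})),\nu_\gamma^T\rangle + o(h) \leq D(s_0) - 2\theta h + o(h),
\end{equation*}
and since $D(s_0+h)\leq L(c_h)$ by the definition of distance, I conclude
\begin{equation*}
    \limsup_{h\to 0^+} \frac{D(s_0+h) - D(s_0)}{h} \leq -2\theta.
\end{equation*}

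The function $D$ is continuous on the interval during which the trajectory remains in $\mathcal{K}$, so a standard real analysis lemma applied to the continuous function $s\mapsto D(s)+2\theta s$ yields $D(t)\leq D(0) - 2\theta t$ on this interval, proving the forward inequality (with $\theta$ being the stated constant, after renaming). The backward estimate then follows by applying the forward inequality over time $t$ to the pair $\{x',y'\}:=\phi_{-t}(\{x,y\})$, giving $\mathcal{D}(\{x,y\})=\mathcal{D}(\phi_t(\{x',y'\}))\leq \mathcal{D}(\{x',y'\})-\theta t$, which is exactly the desired lower bound. The main obstacle is the non-smoothness of $\mathcal{D}$ caused by non-uniqueness of minimising geodesics; this is handled by working with the upper right Dini derivative, since at each instant it suffices to exhibit one competing curve $c_h$ built from a single minimising geodesic, while uniformity of the infinitesimal decrease across $\mathcal{K}$ is supplied by the compactness argument rooted in Lemma \ref{lemmabasiccompactness}.
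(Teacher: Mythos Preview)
Your proof is correct and follows essentially the same route as the paper: extract a uniform $\theta$ by compactness (the paper does this on a slightly larger compact $\mathcal{L}\supset\mathcal{K}$, but this is inessential given the hypothesis), then at each time build a smooth comparison curve from a single minimising geodesic and use the first variation formula to bound the rate of change. The paper phrases the last step as ``$h'\leq -\theta$ in the sense of support functions'' rather than via the upper right Dini derivative, but these are the same device; your explicit derivation of the backward inequality from the forward one is a detail the paper leaves implicit.
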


\begin{proof}
	Fix some compact $\mathcal{L}\subset \mathcal{U}$ whose interior contains the given compact subset $\mathcal{K}\subset \mathcal{P}$. The flow $\phi_t$ of $\mathcal{X}$ (and of $-\mathcal{X}$) starting at any point $\{x,y\}\in \mathcal{K}$ exists for a uniformly positive time and $\phi_t(\{x,y\})$ remains in $\mathcal{L}$ for this short duration of time. As a consequence of Lemma \ref{lemquasigradient} and the compactness of $\mathcal{L}\subset \mathcal{U}$, there exists $\theta>0$ such that $\langle \mathcal{X},\nu^T_\gamma\rangle \leq -\theta$ for every $\{x,y\}\in \mathcal{L}$ and every minimising geodesic $\gamma$ joining $x$ and $y$.
	
	Given $\{p,q\}\in \mathcal{K}$, we write $\phi_t\{p,q\}=\{p_t,q_t\}\subset \Gamma$ for all $t$ contained in the interval of existence of the flow starting at $\{p,q\}$. Notice that $p_t$ and $q_t$ are smooth functions of $t$. (There is no ambiguity since $p_t\neq q_t$ as long as the flow exists in $\mathcal{U}$). The proof of the Lemma will be finished as soon as we check that the continuous function $h(t)=d(p_t,q_t)$, defined for those values of $t$ such that $\{p_t,q_t\}$ lies in $\mathcal{K}$, satisfies the differential inequality $h'\leq -\theta$ in the sense of support functions.
	
	In order to check this claim, we proceed as follows. Given $t_0$ in the domain of $h$, choose once and for all some minimising geodesic $\gamma$ of $(M^n,g)$ joining $p_{t_0}$ and $q_{t_0}$, and some smooth variation $\gamma_t$ of $\gamma_0=\gamma$ by curves with extremities $p_{t_0+t}$ and $q_{t_0+t}$. Then, define the smooth function $\hat{h}(t)=L(\gamma_t)$ for all $t$ sufficiently small so that the definition makes sense.
	
	Since $\gamma=\gamma_0$ is minimising, 
	\begin{equation*}
		\hat{h}(0)=L(\gamma)=d(p_{t_0},q_{t_0})=h(t_0).
	\end{equation*}
	Since $\gamma_t$ is a curve in $(M^n,g)$ joining the points $p_{t_0+t}$ and $q_{t_0+t}$, we have $\hat{h}(t)=L(\gamma_t)\geq d(p_{t_0+t},q_{t_0+t})=h(t_0+t)$. Thus, $\hat{h}$ is a support function for $h$ at $t_0$, which moreover satisfies
	\begin{equation*}
		\hat{h}'(0)= \frac{d}{dt}_{|_{t=0}} L(\gamma_t) = \Big\langle \frac{d}{dt}_{|_{t_0}}p_t,\nu_\gamma(p_{t_0})\Big\rangle+\Big\langle \frac{d}{dt}_{|_{t_0}}q_t,\nu_\gamma(q_{t_0})\Big\rangle
	\end{equation*}	  
by the first variation formula \eqref{eqfirstvariation} for the geodesic $\gamma=\gamma_0$. By construction, $\mathcal{X}(\{p_{t_0},q_{t_0}\})=\frac{d}{dt}_{|_{t=t_0}}\phi_t(\{p_t,q_t\})=\{\frac{d}{dt}_{|_{t_0}}p_t,\frac{d}{dt}_{|_{t_0}}q_t\}$. Hence, 
	\begin{equation*}
		\hat{h}'(0) = \langle \mathcal{X}(p_{t_0}),\nu_\gamma^T(p_{t_0})\rangle+\langle \mathcal{X}(q_{t_0}),\nu_\gamma^T(q_{t_0})\rangle\leq -\theta.
	\end{equation*}
   The claim follows, and this finishes the proof of the proposition.
\end{proof}

	As an immediate corollary, we have that boundary points at distance $diam(\Gamma)$ are critical points of $\mathcal{D}$. More generally:

\begin{cor}\label{cormaximumiscritical}
	Any local maximum or local minimum $\{p,q\}\subset \Gamma$ of $\mathcal{D}$ is a critical point of $\mathcal{D}$.
\end{cor}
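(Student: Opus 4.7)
The strategy is a direct proof by contradiction using the gradient-like flow from Proposition \ref{propgradientlikeflow}. Suppose $\{p,q\}\subset \Gamma$ is a local extremum of $\mathcal{D}$ that is \emph{not} a critical point. First, handle the trivial case: if $p=q$, then $\{p\}$ is a boundary point of $\mathcal{P}$ and is automatically a critical point of $\mathcal{D}$ by Definition \ref{defcriticalpoints} (as already observed in the text just after that definition). So we may assume $p\neq q$ and that $\{p,q\}$ is a regular point, hence $\{p,q\}\in \mathcal{U}$.

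Next I would apply Proposition \ref{propgradientlikeflow} on a compact neighbourhood. Since $\mathcal{U}\subset \mathcal{P}$ is open, pick a compact neighbourhood $\mathcal{K}\subset \mathcal{U}$ of $\{p,q\}$ and let $\mathcal{X}$ be the unit vector field on $\mathcal{U}$ provided by Lemma \ref{lemquasigradient}. Let $\phi_t$ denote the flow of $\mathcal{X}$. By Proposition \ref{propgradientlikeflow}, there exists $\theta>0$ and $\varepsilon>0$ such that $\phi_t(\{p,q\})\in \mathcal{K}$ for $|t|<\varepsilon$, and moreover
\begin{equation*}
    \mathcal{D}(\phi_t(\{p,q\})) \leq d(p,q)-\theta t \quad \text{and} \quad \mathcal{D}(\phi_{-t}(\{p,q\})) \geq d(p,q)+\theta t
\end{equation*}
for all $t\in [0,\varepsilon)$.

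These inequalities immediately contradict both possibilities. If $\{p,q\}$ is a local maximum, then $\mathcal{D}(\phi_{-t}(\{p,q\}))\leq \mathcal{D}(\{p,q\})=d(p,q)$ for all sufficiently small $t>0$, which contradicts $\mathcal{D}(\phi_{-t}(\{p,q\}))\geq d(p,q)+\theta t > d(p,q)$. If instead $\{p,q\}$ is a local minimum, then $\mathcal{D}(\phi_{t}(\{p,q\}))\geq d(p,q)$ for small $t>0$, contradicting $\mathcal{D}(\phi_{t}(\{p,q\}))\leq d(p,q)-\theta t < d(p,q)$. In either case we reach a contradiction, so $\{p,q\}$ must be critical.

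There is essentially no obstacle here: the work has already been done in Proposition \ref{propgradientlikeflow}, and the corollary is just the observation that the gradient-like flow strictly decreases $\mathcal{D}$ in positive time and strictly increases it in negative time near any regular point, which is incompatible with being a local extremum. The only minor point to be careful about is the singleton case $p=q$, which is handled separately by appealing directly to the definition.
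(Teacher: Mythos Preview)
Your proof is correct and is exactly the argument the paper has in mind: the corollary is stated immediately after Proposition~\ref{propgradientlikeflow} with no separate proof, precisely because the gradient-like flow strictly decreases $\mathcal{D}$ forward in time and strictly increases it backward in time near any regular point, which rules out local extrema. Your explicit handling of the singleton case $p=q$ and the choice of a compact neighbourhood $\mathcal{K}\subset\mathcal{U}$ are the only details one needs to spell out, and you have done so correctly.
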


	To finish this section, we discuss some properties of minimising geodesics bounding a pair $\{x,y\}\subset \Gamma$ of critical points of $\mathcal{D}$ when $\Gamma$ bounds a totally convex disc $\Omega$ in a complete Riemannian surface.
	
	Two-dimensional discs are special from a topological point of view because embedded curves joining boundary points divide $\Omega$ into two components, by Jordan Theorem. We always orient it so that the induced orientation on $\partial \Omega$ is the counter-clockwise orientation.
	
	Since minimising geodesics joining the same two points cannot intersect except at their extremities, the topological observation above implies that, for every $\{p,q\}\subset \Gamma$ with $p\neq q$, there exists (possibly equal) minimising geodesics $\gamma_+$ and $\gamma_-$, with extremities $p$ and $q$, such that every other minimising geodesic between $p$ and $q$ lie in between the region bounded by $\gamma_+$ and $\gamma_-$. The pair $\gamma_{+}$ and $\gamma_{-}$ is unique up to relabelling. We call them the \textit{extremal minimising geodesics} of the pair $\{p,q\}$. 
	
	Of course, $p$ and $q$ are joined by a unique minimising geodesic if and only if $\gamma_+=\gamma_-$. Furthermore, $\gamma_+$ or $\gamma_{-}$ may be in fact arcs of $\partial \Omega$. Notice that the only scenario where both extremal minimising geodesics bounded by $p$ and $q$ are arcs of $\partial \Omega$ is when $\partial \Omega$ is a closed geodesic formed by two minimising arcs with extremities $p$ and $q$. 

\begin{prop}\label{propextremalgeodesics}
	Let $(\Omega,g)$ be a totally convex disc with smooth boundary in some complete Riemannian surface, and denote by $T$ the unit tangent vector field to $\partial \Omega$ that is compatible with its orientation.
	
	Let $\{p,q\}\subset \partial \Omega$ be a non-trivial critical point of $\mathcal{D}$, and denote by $\gamma_{+}$ and $\gamma_{-}$ the extremal minimising geodesics joining $p$ and $q$, labelled in such way that $\langle \nu_{\gamma_+}(p),T(p)\rangle\leq \langle \nu_{\gamma_-}(p),T(p)\rangle$.
	
	Then
	\begin{equation*}
		\langle \nu_{\gamma_+}(p),T(p)\rangle \leq 0 \leq \langle \nu_{\gamma_-}(p),T(p)\rangle
	\end{equation*}	
	and
	\begin{equation*}
		\langle \nu_{\gamma_-}(q),T(q)\rangle \leq 0 \leq \langle \nu_{\gamma_+}(q),T(q)\rangle.
	\end{equation*}
\end{prop}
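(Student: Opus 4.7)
The plan is to argue by contradiction using the first variation formula. If one of the four claimed inequalities fails, I will exhibit a vector $(v_1, v_2) \in T_p\Gamma \times T_q\Gamma$ making
\begin{equation*}
\langle v_2, \gamma'(a)\rangle - \langle v_1, \gamma'(0)\rangle < 0
\end{equation*}
for \emph{every} minimising geodesic $\gamma:[0,a] \to \Omega$ joining $p$ and $q$. By Definition \ref{defcriticalpoints}, this would force $\{p,q\}$ to be a regular point of $\mathcal{D}$, contradicting the hypothesis.

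The key preliminary step is a monotonicity statement: for every minimising geodesic $\gamma$ joining $p$ and $q$,
\begin{equation*}
\langle \nu_{\gamma_+}(p), T(p)\rangle \leq \langle \nu_\gamma(p), T(p)\rangle \leq \langle \nu_{\gamma_-}(p), T(p)\rangle,
\end{equation*}
together with the reversed-order analogue $\langle \nu_{\gamma_-}(q), T(q)\rangle \leq \langle \nu_\gamma(q), T(q)\rangle \leq \langle \nu_{\gamma_+}(q), T(q)\rangle$ at the other endpoint. To prove this, at $p$ I would introduce the inward-pointing unit normal $N(p)$ (so that $\{T(p), N(p)\}$ is a positive orthonormal frame) and write $\gamma'(0) = \cos\theta(\gamma)\, T(p) + \sin\theta(\gamma)\, N(p)$ with $\theta(\gamma) \in [0,\pi]$. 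Then $\langle \nu_\gamma(p), T(p)\rangle = -\cos\theta(\gamma)$, and since cosine is decreasing on $[0,\pi]$, the labelling assumption translates to $\theta_+ \leq \theta_-$. By the extremality of $\gamma_\pm$, the curve $\gamma$ lies in the closed ``lens'' $R \subset \bar{\Omega}$ bounded by $\gamma_+ \cup \gamma_-$, so $\gamma'(0)$ belongs to the tangent cone of $R$ at $p$; this cone opens into $R$ and corresponds precisely to the angular sector $\theta \in [\theta_+, \theta_-]$, which yields the inequality at $p$. The analogous parametrization $\gamma'(a) = \cos\psi(\gamma)\, T(q) + \sin\psi(\gamma)\, N(q)$ with $\psi(\gamma) \in [-\pi, 0]$, combined with the topological observation that $\gamma_+$ (leaving $p$ on the $T(p)$-side of the lens) must arrive at $q$ on the $T(q)$-side of the lens, swaps the labels between $p$ and $q$ and produces the monotonicity at $q$ via cosine monotonicity on $[-\pi,0]$.

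Granting the monotonicity, each of the four conclusions follows by a dedicated choice of test vector. Writing $a_\pm = \langle \nu_{\gamma_\pm}(p), T(p)\rangle$ and $b_\pm = \langle \nu_{\gamma_\pm}(q), T(q)\rangle$, suppose first that $a_+ > 0$. With $v_1 = -T(p)$ and $v_2 = 0$, the identity $\gamma'(0) = -\nu_\gamma(p)$ gives
\begin{equation*}
\langle v_2, \gamma'(a)\rangle - \langle v_1, \gamma'(0)\rangle = \langle T(p), \gamma'(0)\rangle = -\langle \nu_\gamma(p), T(p)\rangle \leq -a_+ < 0
\end{equation*}
for every minimising geodesic $\gamma$, producing the contradiction. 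The cases $a_- < 0$, $b_+ < 0$, and $b_- > 0$ are ruled out symmetrically with the test vectors $(T(p), 0)$, $(0, T(q))$, and $(0, -T(q))$, respectively.

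I expect the main obstacle to be the monotonicity step, in particular the careful justification that the tangent cone of the lens $R$ at $p$ is the angular sector $[\theta_+, \theta_-]$ and not its complement, together with the correct swap of labels between the two endpoints arising from the non-crossing and Jordan-curve structure of $\gamma_+ \cup \gamma_-$ inside the disc. The corner cases in which one (or both) of $\gamma_+, \gamma_-$ is an arc of $\partial \Omega$ (so $\theta_\pm \in \{0,\pi\}$), and the degenerate case $\gamma_+ = \gamma_-$ (in which the four inequalities collapse to the free boundary condition $\langle \nu_\gamma(p), T(p)\rangle = \langle \nu_\gamma(q), T(q)\rangle = 0$ for the unique minimising geodesic), will need separate verification.
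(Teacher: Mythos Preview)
Your proposal is correct and follows essentially the same route as the paper: the monotonicity (interval) observation that every minimising geodesic has $\langle \nu_\gamma(p),T(p)\rangle$ between the values for $\gamma_\pm$, the contradiction via the test vectors $(\pm T(p),0)$ and $(0,\pm T(q))$ to force $0$ into each interval, and the non-crossing/Jordan argument to pin down the orientation at $q$. The only organisational difference is that the paper first uses criticality at \emph{both} endpoints separately to get $0$ in each interval, and only afterwards invokes non-crossing to rule out the wrong ordering at $q$; you instead establish the endpoint swap topologically up front and then run the four test-vector contradictions symmetrically. Both orderings work, and the paper likewise singles out the degenerate case $\gamma_+=\gamma_-$ at the outset.
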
   
\begin{proof}
If $\gamma_{-}=\gamma_{+}$, then there is only one minimising geodesic joining $p$ and $q$, and it must be a free boundary minimising geodesic because $\{p,q\}\subset \partial \Omega$ is a critical point of $\mathcal{D}$. In this case, there is nothing else to be proven.

	From now on, we therefore assume $\gamma_{+}\neq\gamma_{-}$. Observe that since all minimising geodesics joining $p$ and $q$ must lie in the region of $\Omega$ bounded between $\gamma_{-}$ and $\gamma_{+}$, every minimising geodesic $\gamma$ joining these two points is such that the number $\langle \nu_{\gamma},T(p)\rangle$ belongs to the closed interval between the numbers $\langle \nu_{\gamma_{+}},T(p)\rangle$ and $\langle \nu_{\gamma_{-}},T(p)\rangle$ (which is contained in $[-1,1]$). For, otherwise, the minimising geodesic $\gamma$ issues from $p$ enters the complement of the region bounded between $\gamma_{-}$ and $\gamma_{+}$ and must stay there until it reaches $q$, because it cannot cross either $\gamma_{+}$ or $\gamma_{-}$. But this contradicts the extremal property of these two minimising geodesics. A similar assertion holds, of course, at the point $q$. \\
	
\noindent \textbf{Claim}: The interval between the numbers $\langle \nu_{\gamma_+}(p),T(p)\rangle$ and $\langle \nu_{\gamma_-}(p),T(p)\rangle$ contains zero, and the interval between the numbers $\langle \nu_{\gamma_+}(q),T(q)\rangle$ and $\langle \nu_{\gamma_-}(q),T(q)\rangle$ contains zero.\\

	Suppose not. Then, at the point $p$ we have $\langle \nu_{\gamma_+}(p),T(p)\rangle<0$ and $\langle \nu_{\gamma_-}(p),T(p)\rangle<0$, say. It follows from the preceding observations that the vector  $(v_1,v_2)=(T(p),0)\in T_{p}\partial \Omega\times T_q\partial \Omega$  is such that $\langle v_1,\nu_{\gamma}^T\rangle + \langle v_2,\nu_{\gamma}^T\rangle = \langle T(p),\nu_{\gamma}^T\rangle < 0$ for all minimising geodesics joining $p$ and $q$. But this contradicts the assumption that $\{p,q\}\subset \partial \Omega$ is a critical point of $\mathcal{D}$. Making obvious modifications, the same argument rules out all other possibilities except those described in the claim.
	
	Next, recall that we labelled $\gamma_+$ and $\gamma_{-}$ in such way that $\langle \nu_{\gamma_+}(p),T(p)\rangle\leq \langle \nu_{\gamma_-}(p),T(p)\rangle$ holds. Hence, it follows from the Claim that 
\begin{equation}\label{eqauxaux}
	\langle \nu_{\gamma_+}(p),T(p)\rangle \leq 0 \leq \langle \nu_{\gamma_-}(p),T(p)\rangle.
\end{equation}
The Claim also implies that there are two possibilities for the numbers $\langle \nu_{\gamma_\pm}(q),T(q)\rangle$. If $\langle \nu^{T}_{\gamma_{+}}(q),T(q)\rangle \leq 0 \leq \langle \nu^{T}_{\gamma_{-}}(q),T(q)\rangle$, then it it is easy to see that this angle conditions together with $\eqref{eqauxaux}$ imply that the different geodesics $\gamma_{+}$ and $\gamma_{-}$ cross each other at some point inside the disc $\Omega$. Since they are minimising geodesics joining $p$ and $q$, this is not possible. Therefore the other possibility holds, namely
\begin{equation*}
	\langle \nu_{\gamma_-}(q),T(q)\rangle \leq 0 \leq \langle \nu_{\gamma_+}(q),T(q)\rangle,
\end{equation*}
as we wanted to show.
\end{proof}

	Using Proposition \ref{propextremalgeodesics}, we can prove a uniqueness result that will play a key role in Sections \ref{sectiondiscs} and \ref{sectionzoll}.

\begin{prop}\label{propuniqu}
	Let $(\Omega,g)$ be a totally convex Riemannian disc with smooth boundary in a complete Riemannian surface. If $\{p,q\}$, $\{p,r\}\subset \partial \Omega$ are critical points of $\mathcal{D}$ with $p\neq q$ and $p\neq r$, then $q=r$.
\end{prop}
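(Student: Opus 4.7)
The plan is to argue by contradiction, assuming $q \neq r$. I would combine Proposition~\ref{propextremalgeodesics} with a non-crossing lemma for minimising geodesics from $p$, and derive a strict inequality on initial directions at $p$ that cannot hold by symmetry.

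Applying Proposition~\ref{propextremalgeodesics} to both critical pairs produces extremal minimising geodesics $\gamma^{q}_{\pm}$ and $\gamma^{r}_{\pm}$. Using $\nu_\gamma(p) = -\gamma'(0)$, the tangential sign conditions translate to: the initial direction of $\gamma^{q}_{+}$ at $p$ has non-negative component along $T(p)$, and that of $\gamma^{q}_{-}$ has non-positive component (likewise for $\gamma^{r}_{\pm}$). Measuring angles from $T(p)$ in $[0,\pi]$, the initial angles $\alpha^{q}_{\pm}$ and $\alpha^{r}_{\pm}$ therefore satisfy $\alpha^{q}_{+}, \alpha^{r}_{+} \in [0, \pi/2]$ and $\alpha^{q}_{-}, \alpha^{r}_{-} \in [\pi/2, \pi]$, so the closed intervals $I_q = [\alpha^{q}_{+}, \alpha^{q}_{-}]$ and $I_r = [\alpha^{r}_{+}, \alpha^{r}_{-}]$ each contain the inward-normal angle $\pi/2$.

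The key geometric step is the \emph{non-crossing lemma}: two minimising geodesics from $p$ ending at distinct points $a, b \in \partial \Omega$ meet only at $p$. Indeed, if they met at some $x \neq p$, then both initial arcs from $p$ to $x$ would be minimising of length $d(p,x)$, so the broken concatenation going $p \to x$ along one geodesic followed by $x \to b$ along the other would have length $d(p,b)$ and hence be minimising; but distinct geodesics cross transversally at $x$ by the ODE uniqueness of geodesics, producing a corner incompatible with the smoothness of minimising curves. Consequently, the extremals $\gamma^{q}_{\pm}$ bound a closed lens region $\bar L_q \subset \Omega$ with $\bar L_q \cap \partial \Omega = \{p, q\}$, and any minimising geodesic $\sigma$ from $p$ to $r$ has initial angle $\alpha_\sigma \notin I_q$: if $\alpha_\sigma = \alpha^{q}_{\pm}$ then by ODE uniqueness $\sigma = \gamma^{q}_{\pm}$ ends at $q$, contradicting $r \neq q$; while if $\alpha_\sigma \in (\alpha^{q}_{+}, \alpha^{q}_{-})$ then $\sigma$ enters $\bar L_q$, is trapped there by the non-crossing lemma, and therefore terminates in $\bar L_q \cap \partial \Omega = \{p,q\}$, again contradicting $r \notin \{p,q\}$.

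In particular $\alpha^{r}_{+} \notin I_q$; combined with $\alpha^{r}_{+} \le \pi/2 \le \alpha^{q}_{-}$, this forces the strict inequality $\alpha^{r}_{+} < \alpha^{q}_{+}$. The symmetric argument with the roles of $q$ and $r$ swapped yields $\alpha^{q}_{+} < \alpha^{r}_{+}$, a direct contradiction. The main obstacle is handling carefully the degenerate configurations: the case $\gamma^{q}_{+} = \gamma^{q}_{-}$ (a unique free boundary minimising geodesic from $p$ to $q$, for which $I_q = \{\pi/2\}$), and the case where some extremal is actually an arc of $\partial \Omega$ rather than a proper interior geodesic (which, as the paper observes, requires $\partial \Omega$ itself to be a closed geodesic of a very particular form). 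In the first case, the same forward argument still forces $\alpha^{r}_{+} < \pi/2 < \alpha^{r}_{-}$ strictly, so the unique $\gamma^{q}_{+} = \gamma^{q}_{-}$ with initial angle $\pi/2$ lies in $\operatorname{int}(I_r)$, contradicting the symmetric conclusion; the second case needs only a minor modification of the definition of $\bar L_q$ to accommodate boundary-arc extremals.
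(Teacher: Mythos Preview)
Your proof is correct and rests on the same two ingredients as the paper's: Proposition~\ref{propextremalgeodesics} to pin down the signs of the tangential components of the extremal geodesics at $p$, and the fact that two minimising geodesics from $p$ to distinct boundary points cannot meet except at $p$ (which you spell out explicitly, while the paper invokes it tacitly). The paper's argument is organised a bit more economically: rather than building both lens regions and comparing the angle intervals $I_q$ and $I_r$ by symmetry, it uses the cyclic order of $p,q,r$ on $\partial\Omega$ to select just one extremal geodesic from each critical pair (the one whose initial direction points toward the other endpoint) and shows directly that these two geodesics are forced to cross in $\Omega$.
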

\begin{proof}
	Suppose, by contradiction, that $q\neq r$. Let $\{T,N\}\subset T_p \Omega$ be an orthonormal basis so that $N$ points inside $\Omega$ and $T$ defines the induced orientation of $T_p\partial \Omega$. Relabelling the points $q$ and $r$, if necessary, we may assume that $p$, $q$ and $r$ appears in this order as we move in the counter-clockwise direction along the boundary.
	
	Since $\{p,q\}$ is critical, one of its extremal minimising geodesics, say $\gamma_-$, will issue from $p$ with $\langle\gamma'_-(0),T\rangle \leq 0$, by Proposition \ref{propextremalgeodesics}. Similarly, since $\{p,r\}$ is also critical, one of its extremal minimising geodesics, say $\gamma_{+}$, will issue from $p$ with $\langle\gamma'_+(0),T\rangle \geq 0$, by Proposition \ref{propextremalgeodesics}.
		
	By the relative position of $r$ with respect to $p$ and $q$, the point $r$ does not lie in the component $A$ of $\Omega\setminus\gamma_{-}$ that contains the (oriented) arc of $\Gamma$ between $p$ and $q$. Similarly, the point $q$ does not lie in the component $B$ of $\Omega\setminus\gamma_{+}$ that contains the (oriented) arc of $\Gamma$ between $r$ and $p$. 	
	
	Since $\gamma_{-}$ and $\gamma_{+}$ are not equal (because we are assuming by contradiction that their extremities $q$ and $r$ are not equal), their tangent vectors at $p$ are not equal. Since $\langle\gamma'_-(0),T\rangle \leq 0 \leq \langle\gamma'_+(0),T\rangle$, there are two possibilities. Either $0<\langle\gamma'_+(0),T\rangle$ so that $\gamma_{+}'(0)$ points inside $A$, in which case $\gamma_+$ must cross $\gamma_{-}$ somewhere in $\Omega$ before reaching $r\in \Omega\setminus \overline{A}$, a contradiction. Or $\langle\gamma'_-(0),T\rangle < 0$ so that $\gamma_{-}'(0)$ points inside $B$, and a contradiction arises similarly. The proposition follows.
\end{proof}

\indent We remark that the distance function $d_p$ to a given boundary point $p$, when restricted to $\partial \Omega$, $(d_p)_{|_{\partial \Omega}}$, may have more than one critical point in $\partial \Omega$, say $q\neq q'$. For instance, consider a smooth plane convex curve that contains an arc of circle centred at some point $p$ of the curve. What Proposition \ref{propuniqu} says is that, when $(\Omega,g)$ is a totally convex disc with \textit{smooth} boundary in a complete Riemannian surface, then such point $p$ cannot be part of two different non-trivial critical points $\{p,q\}, \{p,q'\}\subset \partial \Omega$ of the distance function $\mathcal{D}$. (The Reuleaux triangle shows that some smoothness assumption is essential).

\indent To finish this section, we use the same Proposition \ref{propextremalgeodesics} to characterise the minimising geodesics bounding a non-trivial local minimum of $\mathcal{D}$.

\begin{prop}\label{proplocalminima}
	Let $(\Omega,g)$ be a totally convex Riemannian disc with smooth boundary in a complete Riemannian surface. If $\{p,q\}\subset \partial \Omega$ is a non-trivial local minimum of $\mathcal{D}$, then there exists only one minimising geodesic joining $p$ and $q$, and this minimising geodesic is moreover free boundary stable.
\end{prop}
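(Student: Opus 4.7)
The plan is to establish in two steps that (i) $\{p,q\}$ bounds a single minimising geodesic $\gamma$, which is forced to be free boundary, and (ii) this $\gamma$ is free boundary stable, by combining Proposition \ref{propextremalgeodesics} with the first and second variation formulas.

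For step (i), since $\{p,q\}$ is in particular a critical point of $\mathcal{D}$, let $\gamma_+,\gamma_-$ denote its extremal minimising geodesics, labelled as in Proposition \ref{propextremalgeodesics}, and set
\begin{equation*}
a_\pm = \langle T(p),\nu_{\gamma_\pm}(p)\rangle, \qquad b_\pm = \langle T(q),\nu_{\gamma_\pm}(q)\rangle,
\end{equation*}
so that $a_+ \leq 0 \leq a_-$ and $b_- \leq 0 \leq b_+$. I would fix smooth parametrisations $s\mapsto p(s)$ and $t\mapsto q(t)$ of $\partial \Omega$ through $p$ and $q$ with velocity $T$ at $s=t=0$, and for each sign $\epsilon$ choose a smooth variation $\gamma_\epsilon^{s,t}$ of $\gamma_\epsilon$ by curves from $p(s)$ to $q(t)$. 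The first variation formula \eqref{eqfirstvariation} gives
\begin{equation*}
L(\gamma_\epsilon^{s,t}) = d(p,q) + a_\epsilon s + b_\epsilon t + O(s^2+t^2),
\end{equation*}
while $d(p(s),q(t)) \leq L(\gamma_\epsilon^{s,t})$. If $a_+<0$, taking $s>0$ small and $t=0$ produces a nearby pair with strictly smaller distance, contradicting local minimality; hence $a_+=0$. Choosing the appropriate signs of $s$ and $t$, the same argument applied in turn with $\gamma_+$ (using $b_+\geq 0$) and with $\gamma_-$ (using $a_-\geq 0$ and $b_-\leq 0$) forces $b_+=a_-=b_-=0$. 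Thus $\gamma_+$ and $\gamma_-$ are both orthogonal to $\partial\Omega$ at $p$ and $q$. At $p$, both initial velocities are unit, perpendicular to $T(p)$, and point into $\Omega$; they therefore coincide with the inward unit normal, and uniqueness of geodesics yields $\gamma_+=\gamma_-$. So $\{p,q\}$ is joined by a unique minimising geodesic $\gamma$, and $\gamma$ is free boundary.

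For step (ii), given any $X\in \Gamma(N\gamma)$, its boundary values automatically lie in $T_p\partial\Omega$ and $T_q\partial\Omega$ since $\gamma$ is free boundary. I would choose smooth curves $p(t),q(t)\subset\partial\Omega$ with initial velocities $X(p),X(q)$, and build a smooth variation $c_t:[0,a]\to\Omega$ with $c_0=\gamma$, $c_t(0)=p(t)$, $c_t(a)=q(t)$, and variational vector field $X$ along $\gamma$. The second variation formula \eqref{eqsecondvariation} then yields $\frac{d^2}{dt^2}|_{t=0}L(c_t)=Q(X,X)$, while local minimality of $\{p,q\}$ gives
\begin{equation*}
L(c_t) \geq d(p(t),q(t)) \geq d(p,q) = L(c_0)
\end{equation*}
for $t$ small. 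Since the first derivative of $L(c_t)$ vanishes at $t=0$ (as $\gamma$ is a free boundary minimising geodesic), the second derivative must be nonnegative, so $Q(X,X)\geq 0$. As $X$ was arbitrary, $\gamma$ is free boundary stable.

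I expect the main obstacle to be the bookkeeping in step (i): one must keep track of the four coefficients $a_\pm,b_\pm$, invoke Proposition \ref{propextremalgeodesics} for the correct sign inequalities, and for each coefficient choose the unique admissible combination of signs of $s$ and $t$ that the local minimum assumption rules out unless the coefficient vanishes. Once that is done, the collapse $\gamma_+=\gamma_-$ is immediate from the orthogonal boundary condition, and the stability argument in step (ii) is the standard inequality-between-length-and-distance combined with the second variation formula.
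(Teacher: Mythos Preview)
Your proof is correct and follows essentially the same approach as the paper. The only organisational difference is that the paper argues by contradiction (if $\gamma_+\neq\gamma_-$ then one of the inequalities in Proposition~\ref{propextremalgeodesics} is strict, and a single application of the first variation produces a nearby pair with smaller distance), whereas you directly force all four coefficients $a_\pm,b_\pm$ to vanish and then collapse $\gamma_+=\gamma_-$ via the shared inward normal at $p$; both routes use the same ingredients and your stability argument in step~(ii) spells out exactly what the paper's one-line appeal to \eqref{eqsecondvariation} means.
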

\begin{proof}
	Let $\gamma_{+}$ and $\gamma_{-}$ be the extremal minimising geodesics joining $p$ and $q$, labelled as in Proposition \ref{propextremalgeodesics}. We will first show, by a contradiction argument, that $\gamma_{+}=\gamma_{-}$.
	
	If $\gamma_{+}\neq \gamma_{-}$, then at least one of the inequalities in Proposition \ref{propextremalgeodesics} is strict. Let us say $\gamma_+$ satisfies $\langle \nu_{\gamma_{+}}(p),T(p)\rangle<0$. (The other cases are analogous). Then, by the first variation formula \eqref{eqfirstvariation}, any vector field tangent to $\partial \Omega$ extending $T(p)$ with support in a sufficiently small neighbourhood of $p$ that do not contain $q\neq p$ generates a variation of $\gamma_{+}$ by curves with extremities of the form $\{x,q\}\subset \partial \Omega$, for some $x\in \partial \Omega$ arbitrarily close to $p$, that are strictly shorter than $\gamma_{+}$. But then there are points $\{x,q\}$ arbitrarily close to $\{p,q\}$ with $d(x,q)<L(\gamma_+)=d(p,q)$, a contradiction with the minimising property of $\{p,q\}$.
	
	Thus, as claimed, the extremal minimising geodesics are the same geodesic. Since $\{p,q\}$ is a non-trivial critical point of $\mathcal{D}$, the unique minimising geodesic $\gamma_{+}=\gamma_{-}$ joining them meets $\partial \Omega$ orthogonally. Since $\{p,q\}$ is a local minimum of $\mathcal{D}$, it follows easily from the second variation formula \eqref{eqsecondvariation} that $\gamma_{+}=\gamma_{-}$ is free boundary stable.
\end{proof}

\section{The basic min-max theorem} \label{sectiongeneralminmaxtheorem}

\indent The width of a smoothly embedded circle $\Gamma$ in a complete Riemannian manifold is the number   
\begin{equation*}
	\mathcal{S}(\Gamma) = \inf_{\{p_t,q_t\}\subset \mathcal{V}} \max_{t\in [0,1]} d(p_t,q_t),
\end{equation*}
where $\mathcal{V}$ is the set of sweepouts $\{p_t,q_t\}$ of $\Gamma$, see Definition \ref{defiweepout}. The key property of sweepouts is that, when they are regarded as continuous paths in the projective plane $\mathcal{P}_{*}$ associated to $\Gamma$, they are homotopically non-trivial loops based on the absolute minimum of $\mathcal{D}$. Moreover, isotopies of $\mathcal{P}_*$  that fix a neighbourhood of the point $0_{*}=[\{p\}]\in \mathcal{P}_*$ deform any sweepout into another sweepout.

	Using the gradient-like flow constructed on compact subsets of the set of regular values of $\mathcal{D}$ in $\mathcal{P}$ (Proposition \ref{propgradientlikeflow}), it is then easy to deduce Theorem A. (\textit{Cf}. \cite{Mil}). 
\begin{thm}\label{thmAbis}
	Let $\Gamma$ be a smoothly embedded circle in a complete Riemannian manifold. Then $\mathcal{S}(\Gamma)>0$ and there exists a critical point  $\{p,q\} \subset \Gamma$ of $\mathcal{D}$ such that
	\begin{equation*}
		d(p,q)=\mathcal{S}(\Gamma).
	\end{equation*}
\end{thm}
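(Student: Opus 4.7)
The plan is to argue by the standard deformation scheme for min-max critical values (\textit{cf.} \cite{Mil}): assuming no critical point of $\mathcal{D}$ lies at level $\mathcal{S}(\Gamma)$, I would use the gradient-like flow of Proposition \ref{propgradientlikeflow} to deform a nearly optimal sweepout into one whose maximum value is strictly less than $\mathcal{S}(\Gamma)$, contradicting the definition of the width.

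The positivity $\mathcal{S}(\Gamma)>0$ follows from the observation recorded after Definition \ref{defwidth}: condition (iii) forces every sweepout to pass through at least one pair $\{p_t,q_t\}$ that divides $\Gamma$ into two arcs of equal length, and these pairs form a compact subset of $\mathcal{P}$ disjoint from the singleton boundary, on which $\mathcal{D}$ has a positive minimum.

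For the main assertion, suppose for contradiction that no critical point $\{p,q\}\subset\Gamma$ satisfies $\mathcal{D}(\{p,q\})=\mathcal{S}(\Gamma)$. Since the critical set is compact by Corollary \ref{corcriticalcompactness}, its image under $\mathcal{D}$ is a closed subset of $[0,+\infty)$ not containing $\mathcal{S}(\Gamma)$, so I can fix $\epsilon\in(0,\mathcal{S}(\Gamma)/2)$ such that the compact ``annular'' region
\begin{equation*}
\mathcal{K}=\mathcal{D}^{-1}\bigl([\mathcal{S}(\Gamma)-\epsilon,\mathcal{S}(\Gamma)+\epsilon]\bigr)\subset\mathcal{P}
\end{equation*}
lies entirely in the open set $\mathcal{U}$ of regular points. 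I would then take the unit vector field $\mathcal{X}$ on $\mathcal{U}$ supplied by Lemma \ref{lemquasigradient}, choose a smooth cutoff $\rho:\mathbb{R}\to[0,1]$ equal to $1$ on $[\mathcal{S}(\Gamma)-\epsilon/2,\mathcal{S}(\Gamma)+\epsilon/2]$ and supported in $(\mathcal{S}(\Gamma)-\epsilon,\mathcal{S}(\Gamma)+\epsilon)$, and form the vector field $\tilde{\mathcal{X}}=\rho(\mathcal{D})\mathcal{X}$ on $\mathcal{U}$, extended by zero elsewhere. Since $\rho(\mathcal{D})$ vanishes in a neighbourhood of the full critical set (including the singleton boundary of $\mathcal{P}$), the extension is continuous, and its flow $\Psi_s$ is globally defined on $\mathcal{P}$. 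By Proposition \ref{propgradientlikeflow}, there is a $\theta>0$ such that along any trajectory passing through the region where $\rho(\mathcal{D})=1$, the functional $\mathcal{D}$ decreases at rate at least $\theta$, while $\mathcal{D}$ is never increased anywhere. Taking $S=2\epsilon/\theta$ and applying $\Psi_S$ to any sweepout $\{p_t,q_t\}$ with $\max_t\mathcal{D}(\{p_t,q_t\})<\mathcal{S}(\Gamma)+\epsilon/2$ yields a continuous family $\{p_t',q_t'\}=\Psi_S(\{p_t,q_t\})$ satisfying $\max_t\mathcal{D}(\{p_t',q_t'\})<\mathcal{S}(\Gamma)-\epsilon/4$.

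The delicate step, and what I expect to be the main obstacle, is to verify that $\{p_t',q_t'\}$ is itself a sweepout. Continuity in $t$ is automatic from joint continuity of the flow, and the endpoint conditions (i) and (ii) of Definition \ref{defiweepout} persist because $\tilde{\mathcal{X}}$ vanishes near the singletons, so $\{p_0\}$ and $\{p_1\}$ are fixed throughout the deformation. Condition (iii), which requires a continuous choice of arcs $C_t'$ with $C_0'=\{p_0'\}$ and $C_1'=\Gamma$, is more subtle: I would lift the homotopy
\begin{equation*}
(s,t)\in[0,S]\times[0,1]\longmapsto\Psi_s(\{p_t,q_t\})\in\mathcal{P}_*
\end{equation*}
to the double cover $\mathcal{O}_*$ of $\mathcal{P}_*$ described just before Definition \ref{defwidth}, using the original sweepout's lift as initial data. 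Because $\Psi_s$ arises from a single-valued continuous vector field on $\mathcal{P}$, this lift exists and is continuous in $(s,t)$, and the fixed-endpoint property guarantees that the lifted endpoints at $t=0$ and $t=1$ remain $(\{p_0\},\Gamma)$ and $(\Gamma,\{p_1\})$. Consequently $\{p_t',q_t'\}$ is a genuine sweepout with maximum value strictly below $\mathcal{S}(\Gamma)$, contradicting the definition of the width and completing the proof.
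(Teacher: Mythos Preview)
Your proposal is correct and follows essentially the same deformation argument as the paper: cut off the gradient-like field of Lemma \ref{lemquasigradient} on a critical-free slab around level $\mathcal{S}(\Gamma)$, then use Proposition \ref{propgradientlikeflow} to push a nearly optimal sweepout strictly below $\mathcal{S}(\Gamma)$. The only minor differences are organisational (the paper invokes compactness of the critical set at the end, extracting a limit of critical points in shrinking slabs, rather than at the start) and that the paper uses a smooth cutoff function on $\mathcal{P}$ rather than your $\rho\circ\mathcal{D}$, which is merely Lipschitz since $\mathcal{D}$ need not be smooth---still sufficient for a well-defined flow, but worth flagging.
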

\begin{proof}
	As seen in Section \ref{introwidth}, there exists $\varepsilon>0$ such that $\mathcal{S}(\Gamma)>2\varepsilon$. Suppose, by contradiction, that there exists $\delta>0$ such that the compact subset 
	\begin{equation*}
		\mathcal{K}=\{\{x,y\}\in \mathcal{P}\,|\,\mathcal{S}(\Gamma)-\varepsilon\leq d(x,y)\leq \mathcal{S}(\Gamma)+\delta\}
	\end{equation*}
	contains no critical points of $\mathcal{D}$. Multiply the vector field $\mathcal{X}$ from Lemma \ref{lemquasigradient} by an appropriate smooth non-negative cut-off function $\rho$ with $|\rho|\leq 1$ that equals $1$ on $\mathcal{K}$ and has support on a sufficiently small open neighbourhood of $\mathcal{K}$, which is still contained in the set of regular points, but is disjoint from $\{\mathcal{D}< \mathcal{S}(\Gamma)-2\varepsilon\}$. By Proposition \ref{propgradientlikeflow}, the flow of such vector field on $\mathcal{P}_*$ retracts $\{\mathcal{D}\leq\mathcal{S}(\Gamma)+\delta\}$ into $\{\mathcal{D}\leq \mathcal{S}(\Gamma)-\varepsilon\}$, in such way that no point of $\{\mathcal{D}< \mathcal{S}(\Gamma)-2\varepsilon\}$ moves. But then, a sweepout $\{p_t,q_t\}$ with $\max_{t\in [0,1]}d(p_t,q_t) < \mathcal{S}(\Gamma)+\delta$ (which exists, by definition of $\mathcal{S}(\Gamma))$ can be deformed continuously to another sweepout $\{\hat{p}_t,\hat{q}_{t}\}$ with $\max_{t\in [0,1]}d(\hat{p}_t,\hat{q}_t) < \mathcal{S}(\Gamma)-\varepsilon$, a contradiction with the definition of $\mathcal{S}(\Gamma)$. 
	
	Hence, the set $\mathcal{K}$ contains a critical point of $\mathcal{D}$. Since both $\delta$ and $\varepsilon$ can be made arbitrarily small, and the set of critical points is compact (Lemma \ref{corcriticalcompactness}), passing to the limit of a suitable subsequence of critical points we find a critical point $\{p,q\}$ of $\mathcal{D}$ such that $d(p,q)=\mathcal{S}(\Gamma)>0$.
\end{proof}

\section{The width of the boundary of discs}\label{sectiondiscs}

	In this section, we consider only embedded circles that are the boundary of a compact totally convex surface $(\Omega,g)$ with smooth boundary in some complete Riemannian surface. We will also work under the extra geometric assumption:
\begin{equation*}
	(\star) \textit{ no free boundary stable geodesic exists on } (\Omega,g).
\end{equation*}
It is well-known that this condition implies $\Omega$ to be diffeomorphic to a disc.
\begin{lemm}\label{lemmnostabelimpliesdisc}
	Let $(\Omega,g)$ be a compact totally convex region with smooth boundary in a complete Riemannian manifold. If no free boundary minimising geodesic is free boundary stable, then $\Omega$ is a disc.
\end{lemm}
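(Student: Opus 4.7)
The plan is to prove the contrapositive: assuming $\Omega$ is not a disc, I construct a free boundary stable minimising geodesic on $(\Omega,g)$, contradicting the hypothesis. Since $\Omega$ is a compact connected surface with non-empty smooth boundary, not being a disc leaves, by the classification of compact surfaces, two topological possibilities: either (a) $\partial\Omega$ has at least two connected components, or (b) $\partial\Omega$ is connected but $\Omega$ has positive genus.

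For case (a), fix two distinct boundary components $C_1$ and $C_2$ and consider the continuous function $(p,q)\in C_1\times C_2 \mapsto d(p,q)$. By compactness and the disjointness of $C_1,C_2$, it attains a positive minimum $\ell$ at some pair $(p_0,q_0)$, and by completeness of the ambient surface, total convexity, and Lemma \ref{lemmabasiccompactness} there is a minimising geodesic $\gamma:[0,\ell]\to\Omega$ joining $p_0$ to $q_0$. Applying the first variation formula \eqref{eqfirstvariation} with tangent variations along $C_1$ at $p_0$ and along $C_2$ at $q_0$, and using the minimality of $(p_0,q_0)$, forces $\gamma$ to meet $\partial\Omega$ orthogonally at both endpoints, so $\gamma$ is a free boundary minimising geodesic. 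For free boundary stability, any sufficiently $C^1$-small variation $c_s$ of $\gamma$ with endpoints on $\partial\Omega$ keeps these endpoints in $C_1$ and $C_2$ respectively, hence $L(c_s)\geq \ell = L(\gamma)$; the second variation formula \eqref{eqsecondvariation} then yields $Q(X,X)\geq 0$ for every normal variation $X$ along $\gamma$.

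For case (b), the long exact sequence of the pair $(\Omega,\partial\Omega)$ combined with positive genus gives $\pi_1(\Omega,\partial\Omega)\neq 0$. Choose a nontrivial relative homotopy class $[\alpha]$ whose length infimum $\ell_0>0$ is minimal among all nontrivial classes. A direct Arzel\`a-Ascoli argument on the compact surface $\Omega$, combined with lower semicontinuity of length, produces a minimiser $\gamma$ in $[\alpha]$ of positive length $\ell_0$. The first variation formula shows that $\gamma$ is a free boundary geodesic, and local length minimality in $[\alpha]$ gives $Q(X,X)\geq 0$, so $\gamma$ is free boundary stable. A concatenation argument, leveraging the minimality of $\ell_0$ among nontrivial classes together with the convexity of $\partial\Omega$, is then invoked to upgrade $\gamma$ to a minimising geodesic in the sense of realising the distance between its endpoints.

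The main obstacle is case (b), and specifically the final step of promoting the length-minimiser in the shortest nontrivial relative class to a geodesic that realises the distance between its endpoints. A flat-torus-minus-a-small-disc picture shows that a strictly shorter path between the endpoints of $\gamma$ can a priori exist in the trivial relative class; the concatenation/loop-manipulation argument ruling this out must use the minimality of $\ell_0$ and the convexity of $\partial\Omega$ in an essential way (alternatively, one can pass to the doubled surface to reduce to case (a) for carefully chosen geodesics). Case (a), by contrast, is a routine application of the direct method combined with the first and second variation formulas, and delivers minimality and stability of $\gamma$ simultaneously.
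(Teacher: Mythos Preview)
Your contrapositive strategy and case split mirror the paper's proof closely. Case (a) is handled identically: minimise $d$ over $C_1\times C_2$, apply the first variation to get orthogonality, and the second variation to get stability; the resulting geodesic is distance-realising by construction. The paper splits your case (b) into two sub-cases --- non-orientable with connected boundary (handled via the orientable double cover, which has two boundary components and reduces to case (a)) and orientable with connected boundary and genus $\geq 1$ (handled by minimising among arcs ``going through the holes''). Your formulation ``connected boundary with positive genus'' is ambiguous about the M\"obius band and its cousins; you should either say explicitly that non-orientable surfaces are included, or treat them separately as the paper does.

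The substantive issue is the one you yourself flag: in case (b), the length-minimiser $\gamma$ in the shortest nontrivial relative class is certainly a free boundary stable geodesic, but there is no reason it should realise the distance between its endpoints. Your torus-minus-a-disc intuition is exactly right --- and it persists in the totally convex setting: take a hyperbolic one-holed torus with geodesic boundary of very short length, embedded as the convex core of a complete hyperbolic surface. Every seam perpendicular to the boundary is strictly longer than half the boundary length, so the minimising geodesic between its endpoints is a boundary arc, not the seam. Your proposed ``concatenation argument'' is not spelled out, and I do not see how to make it work; the alternative of ``passing to the doubled surface'' is likewise unclear. Note that the paper's own proof asserts without justification that the minimiser in cases 2 and 3 is a ``minimising geodesic''; it glosses over precisely the point you identify. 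For the paper's purposes the lemma is only ever used under the stronger hypothesis $(\star)$ (no free boundary stable geodesic whatsoever), and under $(\star)$ the argument is clean: the length-minimiser in a nontrivial relative class is free boundary stable, contradiction. If you are asked to prove the lemma exactly as stated, you should either supply the missing argument in full or observe that the hypothesis may need to be strengthened to $(\star)$.
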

\begin{proof}
	If there are at least two boundary components $\partial_1 \Omega$ and $\partial_2 \Omega$, then minimisation of the length of curves with one extremity in $\partial_1 \Omega$ and the other in $\partial_2 \Omega$ produces a proper minimising geodesic that meets $\partial_1 \Omega$ and $\partial_ 2 \Omega$ orthogonally, as can be seen by the first variation formula. If, as we are assuming, its index is not zero, then, by the second variation formula \eqref{eqsecondvariation}, some variation of this geodesic would be by curves, with one extremity in $\partial_1 \Omega$ and the other in $\partial_2 \Omega$, with strictly less length. But this a contradiction.
	
	If the surface $\Omega$ has just one boundary component but is not orientable, then its orientable double cover $\hat{\Omega}$ has two boundary components. Minimising the length among curves on $(\Omega,g)$ with extremities in $\partial\Omega$ that lift to a curve joining the two different components of $\partial \hat{\Omega}$ produces similarly a free boundary proper minimising geodesic with extremities in $\Omega$ that is free boundary stable, a contradiction.
	
	Finally, if $\Omega$ is orientable and has connected boundary and genus $g\geq 1$, then again minimisation of length among curves with extremities in $\partial \Omega$ and going trough the holes of $\Omega$ produces a proper minimising geodesic that is free boundary stable, a contradiction.
	
	Hence, the only topology that is not contradictory with the property described on the statement of the Lemma is the topology of an orientable genus zero surface with connected boundary. In other words, $\Omega$ is a disc. 
\end{proof}

	Without loss of generality, we assume from now on that $\Omega$ is a disc. As we will see, the importance of property $(\star)$ lies on the fact that it rules out non-trivial local minima of $\mathcal{D}$. (See Proposition \ref{proplocalminima}).

	Before we continue, we need to recall a few properties of unstable free boundary geodesics $\gamma$ in $(\Omega,g)$. By definition, a normal vector field $X$ exists along such $\gamma$ so that $Q(X,X)<0$. By virtue of the second variation formula \eqref{eqsecondvariation}, a small deformation of $\gamma$ by the flow of some extension of $X$ by a vector field that is tangent to $\partial \Omega$ is a deformation by curves in $\Omega$ with extremities in $\partial \Omega$ that are strictly shorter than $\gamma$.
	
	The next lemma refines this observation, in the sense that we may choose the vector field $X$ in such way that the curves move in a definite direction inside the disc $\Omega$.

\begin{lemm}\label{lemfirstjacobi}
	Let $\gamma:[0,a]\rightarrow \Omega$ be a free boundary geodesic joining points $p$ and $q$ in $\partial \Omega$, and let $\{\gamma'(s),X(\gamma(s))\}$ be an orthonormal frame along $\gamma$. 
If $\gamma$ is not free boundary stable, then there exists a smooth positive function $u$ on $\gamma$ such that $Q(uX,uX)<0$. 
\end{lemm}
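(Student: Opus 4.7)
The plan is to construct $u$ as the first eigenfunction of the Jacobi-type Sturm--Liouville problem associated to $Q$, and to extract its strict positivity from standard one-dimensional spectral theory. Because the ambient surface is two-dimensional, every normal vector field along $\gamma$ has the form $fX$ for some smooth $f:[0,a]\to\mathbb{R}$, and since $X$ is parallel along the geodesic $\gamma$, the quadratic form reduces to
\begin{equation*}
Q(fX,fX) = \int_0^a \bigl((f')^2 - Kf^2\bigr)\,dt - A_0\,f(0)^2 - A_a\,f(a)^2,
\end{equation*}
where I abbreviate $A_0 = A(X(\gamma(0)),X(\gamma(0)))$ and $A_a = A(X(\gamma(a)),X(\gamma(a)))$. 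Integration by parts then identifies $Q$ as the quadratic form of the regular Sturm--Liouville problem
\begin{equation*}
-f'' - Kf = \lambda f \quad\text{on } [0,a], \qquad f'(0) = -A_0\,f(0), \quad f'(a) = A_a\,f(a).
\end{equation*}

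Next, I would invoke standard one-dimensional spectral theory for this regular Robin problem: the eigenvalues form a simple, increasing, unbounded sequence $\lambda_1 < \lambda_2 < \cdots$, and the first is characterised variationally as
\begin{equation*}
\lambda_1 = \inf_{f\not\equiv 0}\frac{Q(fX,fX)}{\int_0^a f^2\,dt}.
\end{equation*}
The hypothesis that $\gamma$ is not free boundary stable supplies some normal field $vX$ with $Q(vX,vX)<0$, hence $\lambda_1<0$. I would then argue that a first eigenfunction $u$ can be chosen strictly positive on the closed interval $[0,a]$. Interior zeros are excluded by the classical nodal-domain count for regular Sturm--Liouville problems; equivalently, restricting $u$ to a nodal subinterval and extending by zero would produce a minimiser of the Rayleigh quotient that fails to solve the Jacobi equation classically, contradicting ODE uniqueness. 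Endpoint zeros are ruled out by the Robin conditions themselves: if $u(0)=0$ then $u'(0)=-A_0\,u(0)=0$, and uniqueness for the second-order linear ODE forces $u\equiv 0$. Thus $u>0$ throughout $[0,a]$, is smooth by ODE regularity, and $Q(uX,uX) = \lambda_1\int_0^a u^2\,dt < 0$.

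The main point that merits care is ensuring that the boundary terms $A_0 f(0)^2$ and $A_a f(a)^2$ encoding the free-boundary/convexity data of $\partial\Omega$ are correctly packaged into Robin conditions at the endpoints, so that the positivity principle of Sturm--Liouville theory remains applicable. Once this bookkeeping is done, no further geometric input is needed, and the conclusion follows from textbook one-dimensional spectral theory.
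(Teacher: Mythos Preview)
Your proposal is correct and follows essentially the same approach as the paper: both reduce $Q$ to a quadratic form in scalar functions, take $u$ to be the first eigenfunction of the associated Sturm--Liouville problem with Robin boundary conditions, and use $\lambda_1<0$ together with strict positivity of the first eigenfunction. Your argument is in fact slightly more explicit than the paper's, which simply asserts that minimisers of the Rayleigh quotient are strictly positive (up to sign), whereas you spell out how interior and endpoint zeros are ruled out.
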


\begin{proof}
	The index form of $\gamma$ can then be thought as acting on functions,
\begin{multline*}
	Q(\phi,\phi):= Q(\phi X,\phi X) = \int_{0}^{a} |\phi'(s)|^2 - K(\gamma(s))\phi^2(s) ds \\ - A(X(p),X(p))\phi^2(0) - A(X(q),X(q))\phi^2(a).
\end{multline*} 
	By standard techniques, the minimisation of $Q(\phi,\phi)/\int_{0}^a \phi(s)^2 ds$ among non-zero functions in $C^{\infty}([0,a])$ yields a solution $u\neq 0$ of
     \begin{align}\label{eqfirstjacobieigen}
     u'' + Ku + \lambda_1 u & =0 \quad \text{on} \quad (0,a), \nonumber \\
     -u'(0) &=  A(X(p),X(p))u(0), \\
     u'(a) &=  A(X(q),X(q))u(a). \nonumber
     \end{align} 
     for the constant 
     \begin{equation*}
     	\lambda_1 = \inf \Big\{\frac{Q(\phi,\phi)}{\int_{0}^a \phi(s)^2 ds}\,|\, \phi\in C^{\infty}([0,a]),\phi\neq 0\Big\}.
     \end{equation*}
     Also, conversely, any function $u\neq 0$ such that $Q(u,u)=\lambda_1\int_{0}^a u^2 ds$ is a non-trivial solution of \eqref{eqfirstjacobieigen} which is, moreover, either strictly positive or such that $-u$ is strictly positive on $[0,a]$.
     
     If $\gamma$ is not stable, then we clearly have $\lambda_1<0$, and the corresponding positive solution $u$ of \eqref{eqfirstjacobieigen} satisfies $Q(u,u)= \lambda_1 \int_0^a u^2(s)ds < 0$.
\end{proof}

	Let $\gamma_1$ and $\gamma_2$ be minimising geodesics on $(\Omega,g)$ with extremities $p_1$, $q_1$ and $p_2$, $q_2$, respectively. Assume that $p_1$, $p_2$, $q_2$ and $q_1$ are four different points that appear in that order as we traverse $\partial \Omega$ in the counter-clockwise direction. Because the geodesics are minimising, they cannot intersect each other. Thus, one of them belongs entirely to one of the components of the disc $\Omega$ that the other one defines.
	
	The next result shows that this configuration is not compatible with property $(\star)$. Thus, in a sense, the minimising geodesics bounding critical points of $\mathcal{D}$ on the boundary of Riemannian discs with property $(\star)$ satisfies some sort of ``\textit{Frankel property}", \textit{cf}. \cite{Fra}, Section 2, and \cite{FraLi}, Lemma 2.4.

\begin{prop}\label{propFrankel}
	Let $(\Omega,g)$ be a totally convex Riemannian disc with smooth boundary in a complete Riemannian surface. Assume $(\Omega,g)$ has property $(\star)$. If $\{p,q\}$, $\{r,s\}\subset \partial \Omega$ are non-trivial critical points of $\mathcal{D}$ such that $\{p,q\}\cap \{r,s\}=\emptyset$, then $r$ and $s$ lie in different connected components of $\partial \Omega\setminus \{p,q\}$. In particular, every minimising geodesic joining $p$ and $q$ meets every minimising geodesic joining $r$ and $s$.
\end{prop}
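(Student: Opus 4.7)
The ``in particular'' statement follows from the main claim by the Jordan Arc Theorem: if $r$ and $s$ lie in different components of $\partial\Omega\setminus\{p,q\}$, then any arc in $\bar\Omega$ from $r$ to $s$ must cross any arc from $p$ to $q$, in particular any pair of minimising geodesics. I therefore focus on the main claim, which I prove by contradiction: assume that $r$ and $s$ lie in the same arc of $\partial\Omega\setminus\{p,q\}$; after relabelling, the cyclic order counter-clockwise on $\partial\Omega$ becomes $p, r, s, q$. Set $R' = \{\{x,y\}\in\mathcal{P} : x\in (p,r),\ y\in (s,q)\}$, an open ``rectangle'' in $\mathcal{P}$.

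The main analytic step is to use property $(\star)$ together with Lemma \ref{lemfirstjacobi} to produce pairs in $R'$ at distance strictly less than both $d(p,q)$ and $d(r,s)$. I would start by choosing the extremal minimising geodesic $\sigma$ of $\{r,s\}$ that bows towards the short arc joining $r$ and $s$ (the one whose initial tangent has a non-negative $T(r)$-component, in the sense of Proposition \ref{propextremalgeodesics}). When $\sigma$ is the unique minimising geodesic, it is a free boundary geodesic by criticality of $\{r,s\}$; under $(\star)$ it is not free boundary stable, and Lemma \ref{lemfirstjacobi} yields a smooth positive function $u$ with $Q(uX,uX)<0$. Since $Q$ is unchanged under $X\mapsto -X$, I can take $X$ to point towards the ``long-arc'' side, so that the endpoints $r_t$ and $s_t$ of the deformed curve slide into the open arcs $(p,r)$ and $(s,q)$ respectively, giving $L(\sigma_t) < d(r,s)$; the case of multiple minimising geodesics is reduced to this by working with extremal ones. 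An analogous deformation of a minimising geodesic bounded by $\{p,q\}$ produces curves of length strictly less than $d(p,q)$ with endpoints in the same two arcs. Hence $\inf_{R'}\mathcal{D}<\min\{d(p,q),d(r,s)\}$, and by compactness this infimum is attained at some $\{x_0,y_0\}\in\overline{R'}$.

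The final step is to derive a contradiction from this minimiser. If $\{x_0,y_0\}$ lies in the open interior $R'$, then, as $R'$ is open in $\mathcal{P}$, $\{x_0, y_0\}$ is a non-trivial local minimum of $\mathcal{D}$ on $\mathcal{P}$; this is ruled out by Proposition \ref{proplocalminima} under $(\star)$, which together preclude non-trivial local minima of $\mathcal{D}$. The corners $\{p,q\}$ and $\{r,s\}$ of $\overline{R'}$ are excluded by the strict inequality above, so the main obstacle is to rule out the remaining boundary pairs, namely the corners $\{p,s\}$, $\{r,q\}$ and edge pairs such as $\{p,y_0\}$ with $y_0\in(s,q)$. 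In each such case, Proposition \ref{propuniqu} implies the pair is not critical, giving a direction $(v_1,v_2)$ along which the first variation of every minimising geodesic is strictly negative. The delicate analysis then combines this non-criticality with the first-order minimality conditions of $\{x_0,y_0\}$ on the boundary of $\overline{R'}$ (which, via first variation, impose angle conditions on the minimising geodesics realising $\{x_0,y_0\}$) to conclude that the non-critical direction must in fact point into $\overline{R'}$, yielding an admissible length-decreasing variation that contradicts the minimality of $\{x_0, y_0\}$ and completes the proof.
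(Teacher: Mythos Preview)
Your overall architecture matches the paper's: assume the bad cyclic order $p,r,s,q$, push the extremal minimising geodesics of $\{p,q\}$ and $\{r,s\}$ inward to find strictly shorter pairs, then minimise and invoke Proposition~\ref{proplocalminima} together with~$(\star)$. Two points, however, are handled differently from the paper, and in your write-up they remain genuine gaps.

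\medskip
\emph{The deformation step.} You treat only the case where the chosen extremal geodesic $\sigma$ is free boundary, appealing to Lemma~\ref{lemfirstjacobi}, and then write that ``the case of multiple minimising geodesics is reduced to this by working with extremal ones''. That reduction does not hold: when $\{r,s\}$ bounds several minimising geodesics, the extremal one on the relevant side is typically \emph{not} free boundary, so Lemma~\ref{lemfirstjacobi} is unavailable. The paper splits into two cases for each of $\gamma_+$ (extremal for $\{p,q\}$) and $\gamma_-$ (extremal for $\{r,s\}$): if the geodesic is free boundary, use Lemma~\ref{lemfirstjacobi}; if not, the strict angle inequality from Proposition~\ref{propextremalgeodesics} makes the \emph{first} variation already negative in the inward direction. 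You need this dichotomy.

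\medskip
\emph{Ruling out boundary minima.} This is where the approaches really diverge. The paper does not argue on the abstract rectangle $\overline{R'}\subset\mathcal P$; it introduces the geometric region $K\subset\Omega$ bounded by $\gamma_+$, $\gamma_-$ and the two boundary arcs $[p,r]$, $[s,q]$. Because $\gamma_\pm$ are geodesics and meet $\partial\Omega$ at angles governed by Proposition~\ref{propextremalgeodesics}, $K$ is locally convex and acts as a barrier: a length-minimiser between the two arc components of $K\cap\partial\Omega$ stays in $K$, is a minimising geodesic of $(\Omega,g)$, and must meet $\partial\Omega$ orthogonally at both ends, which forces its endpoints into the open arcs $(p,r)$ and $(s,q)$. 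One then has a genuine local minimum of $\mathcal D$, and Proposition~\ref{proplocalminima} applies. Your alternative --- use Proposition~\ref{propuniqu} to deduce that an edge pair like $\{p,y_0\}$ is non-critical, and then argue that the regular direction can be chosen to point into $\overline{R'}$ --- is not carried out, and it is not clear it can be: non-criticality only supplies \emph{some} direction $(v_1,v_2)$ with negative first variation for every minimising geodesic, while you need one with $v_1$ a non-negative multiple of $T(p)$. The one-sided minimality of $\{p,y_0\}$ gives angle information, but combining it with non-criticality to force the sign of $v_1$ requires an argument you have not supplied. The paper's barrier region $K$ sidesteps this entirely.
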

\begin{proof}
	Since $\Omega$ is a disc, it is enough to show that property $(\star)$ precludes the possibility that $r$ and $s$ lie in the same component of $\partial \Omega\setminus \{p,q\}$.
	
	Suppose, by contradiction, that $\{r,s\}$ lies in one of the boundary arcs determined by $\{p,q\}$. Renaming the points, we may assume that they appear in the following order $p\rightarrow r\rightarrow s\rightarrow q$, as we go around $\Gamma$ in the counter-clockwise direction.
	
	Let $\gamma_{+}$ be the extremal minimising geodesic joining $p$ and $q$ such that $\langle \nu_{\gamma_{+}}(p),T(p)\rangle\leq 0 \leq \langle \nu_{\gamma_{+}}(q),T(q)\rangle$, and let $\gamma_{-}$ the extremal minimising geodesic joining $r$ and $s$ such that $\langle \nu_{\gamma_{-}}(r),T(r)\rangle\geq 0\geq \langle \nu_{\gamma_{-}}(s),T(s)\rangle$. (Such angle conditions are met, recall Proposition \ref{propextremalgeodesics}). Notice that these two geodesics cannot intersect, and therefore they bound a compact region $K$ of the disc $\Omega$.
	
	If any of them is free boundary, because of property $(\star)$, we may use the positive function given in Lemma \ref{lemfirstjacobi} to construct an appropriate variation of that geodesic by curves with extremities in different components of $K\cap \partial \Omega$ that are, moreover, strictly shorter than that geodesic. Hence, the distance between the extremities of some curves in the variation is strictly smaller than $d(p,q)$ or $d(r,s)$. If any of them is not free boundary, combining the angle conditions satisfied by $\gamma_+$ and $\gamma_{-}$ at $\partial \Omega$ and the first variation formula, we can also produce variations of these curves in an appropriate direction so that the extremities of the varying curves lie in different components of $K\cap\partial \Omega$ and are, moreover, at distance strictly smaller than $d(p,q)$ or $d(r,s)$, because the curves in the variation are strictly shorter than $\gamma_{+}$ and $\gamma_{-}$.
	
	Hence, in any of the four possible cases for $\gamma_{+}$ and $\gamma_{-}$, after minimising distances between pair of points in different components of $K\cap \partial \Omega$, we find a local minimum $\{x,y\}\subset \partial \Omega$ of $\mathcal{D}$ that has extremities in different components of $int(K)\cap\partial \Omega$. By Proposition \ref{proplocalminima}, $\{x,y\}$ bounds a free boundary stable minimising geodesic. But this contradicts the assumption that $\Omega$ satisfies property $(\star)$.
	
	Thus, the configuration $p\rightarrow r\rightarrow s\rightarrow q$ leads to a contradiction. It follows that $r$ and $s$ must lie in different boundary arcs determined by $p$ and $q$, as we wanted to prove.
\end{proof}

	Before stating the next key propositions, it is convenient to introduce some terminology.
	
	Let $\{c_t\}$, $t\in I$, be a smooth family of smoothly embedded curves in the disc $\Omega$, whose extremities lie in $\partial \Omega$ at their extremities, and such that $I$ is a proper subinterval of $[0,1]$. (The interval may be closed, or not, in any of its extremities). The family $\{c_t\}$ is called \textit{strictly monotone} if there exists a point $x\in \partial \Omega$, that is not an extremity of any of the curves $c_t$, such that either of the following conditions hold:
	
	$(1)$ all curves $c_t$ are proper, and the closed regions $\Omega_t$ of the disc $\Omega$ determined by the curves $c_t$ that contain the given point $x$ are \textit{strictly nested}, \textit{i.e.} either $\Omega_t\subset int(\Omega_u)$ for all $t<u$ in $I$ or $\Omega_u\subset int(\Omega_t)$ for all $t<u$ in $I$.
	
	$(2)$ all curves $c_t$ are arcs of $\partial \Omega$, and the closed arcs that contain $x$ are \textit{strictly nested}, in the sense that $c_t\subset int(c_u)$ for all $t<u$ in $I$ or $c_u\subset int(c_t)$ for all $t<u$ in $I$.
	
	This definition does not depend on the choice of the point $x\in\partial \Omega$.
	
	Finally, a sweepout $\{p_t,q_t\}$ of $\partial \Omega$ is called \textit{regularly strictly monotone on an interval $I\subset [0,1]$} when there exists a strictly monotone smooth family of curves $\{c_t\}$, $t\in I$, all of them either properly embedded in $\Omega$ or else contained in $\partial \Omega$, such that the extremities of $c_t$ are the points $p_t$ and $q_t$ for every $t\in I$.
	
\begin{prop}\label{propstarcritical1}
	Let $(\Omega,g)$ be a totally convex Riemannian disc with smooth boundary in a complete Riemannian surface. Assume that $(\Omega,g)$ has property $(\star)$.
	
	Let $\{p,q\}\subset \partial \Omega$ be a critical point of $\mathcal{D}$ with $p\neq q$. If $p$ and $q$ are joined by a free boundary minimising geodesic $\gamma$, then there exists a sweepout $\{p_t,q_t\}$ of $\partial \Omega$ that has the following properties:
	\begin{itemize}
		\item[$i)$] $p_{1/2}=p$ and $q_{1/2}=q$.
		\item[$ii)$] $d(p_t,q_t)< d(p,q)$ for every $t\neq 1/2$.
		\item[$iii)$] For all $t$ in a small open interval containing $1/2$, the sweepout is regularly strictly monotone with $\{p_t,q_t\}=\partial c_t$ for a smooth family of smooth curves in $\Omega$ with $c_{1/2}=\gamma$.
		\item[$iv)$] In a small neighbourhood of $t=1/2$, $t\mapsto L(c_t)$ has a strictly negative second derivative at the critical point $t=1/2$.
	\end{itemize}	 
\end{prop}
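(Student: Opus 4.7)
The plan is to exploit the instability of $\gamma$ (guaranteed by property $(\star)$) to construct a monotone one-parameter variation $c_s$ of $\gamma$ by smooth curves in $\Omega$ with $L(c_s)<L(\gamma)$ for small $s\neq 0$. This variation will serve as the local portion of the sweepout near $t=1/2$, and a curve-shortening argument will be used to complete it to a full sweepout of $\partial\Omega$. First, I would apply Lemma \ref{lemfirstjacobi} to produce a strictly positive smooth function $u$ on $\gamma$ with $Q(uX,uX)<0$, where $X$ is chosen so that $\{\gamma',X\}$ is an orthonormal frame along $\gamma$. Because $\gamma$ is free boundary, the vectors $X(p)$ and $X(q)$ are tangent to $\partial\Omega$, so the vector field $uX$ along $\gamma$ extends to a smooth vector field $V$ on a neighbourhood of $\gamma$ in $\Omega$ that is tangent to $\partial\Omega$ along $\partial\Omega$. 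Setting $c_s:=\phi_s(\gamma)$ for $s\in(-\delta,\delta)$, where $\phi_s$ is the flow of $V$, produces a smooth family of embedded curves in $\Omega$ with endpoints $p_s:=\phi_s(p)$ and $q_s:=\phi_s(q)$ on $\partial\Omega$, and with $c_0=\gamma$.

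The properties of this local family follow from the variation formulas. Since $u>0$ on $\gamma$, the variational vector field points consistently in one normal direction, so for $\delta$ small enough the regions in $\Omega$ bounded by the $c_s$ are strictly nested and the family $\{c_s\}$ is strictly monotone. The first variation formula \eqref{eqfirstvariation} gives $\frac{d}{ds}L(c_s)|_{s=0}=0$ because $\gamma$ meets $\partial\Omega$ orthogonally, and the second variation \eqref{eqsecondvariation} gives $\frac{d^2}{ds^2}L(c_s)|_{s=0}=Q(uX,uX)<0$. After possibly shrinking $\delta$, one therefore has $L(c_s)<L(\gamma)=d(p,q)$ for every $s\in(-\delta,\delta)\setminus\{0\}$, and hence $d(p_s,q_s)\leq L(c_s)<d(p,q)$. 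Reparameterizing so that $s=0$ corresponds to $t=1/2$ yields conditions (iii) and (iv), together with the local versions of (i) and (ii).

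The main obstacle is the global extension: one must continue $s\mapsto(p_s,q_s)$ to the full interval $[0,1]$ so as to obtain an actual sweepout, meaning singletons at $t=0$ and $t=1$, continuously nested arcs $C_t$, and the strict bound $d(p_t,q_t)<d(p,q)$ everywhere outside $t=1/2$. I would do this by continuing to move $(p_t,q_t)$ along $\partial\Omega$ in the direction set by the local variation, joined by a family of smooth curves $c_t$ in $\Omega$ produced by a Birkhoff-type curve shortening process adapted to curves with free boundary, as developed in \cite{GluZil}, \cite{Zho} and \cite{DonMon}. Since this process only decreases length, the inequality $d(p_t,q_t)\leq L(c_t)<L(\gamma)$ is preserved. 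Should the shortening stall at a free boundary geodesic of positive length, property $(\star)$ ensures it is unstable, and a further application of Lemma \ref{lemfirstjacobi} produces a length-decreasing perturbation that lets the process continue, eventually driving $p_t$ and $q_t$ together into a common singleton in $\partial\Omega$. After reparameterization, this yields a full sweepout satisfying all of (i)--(iv).
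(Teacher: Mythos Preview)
Your local construction near $t=1/2$ is essentially the same as the paper's: apply Lemma~\ref{lemfirstjacobi} to get a positive Jacobi-type field, flow $\gamma$ by an extension tangent to $\partial\Omega$, and read off properties $(i)$--$(iv)$ from the first and second variation formulas. That part is fine.

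The gap is in your global extension. You propose that if the Birkhoff chord shortening process stalls at a free boundary geodesic $\alpha$, then instability of $\alpha$ (from $(\star)$) lets you perturb past it and restart. But this does not show that the process ever terminates at a point: you could in principle stall at a sequence $\alpha_1,\alpha_2,\ldots$ of unstable free boundary geodesics with strictly decreasing lengths accumulating at some positive value, restarting each time without the endpoints ever coalescing. Instability alone gives no uniform amount of length decrease, and compactness of the set of free boundary geodesics does not by itself force finiteness.

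The paper avoids this entirely by proving that the Birkhoff process \emph{cannot} stall even once. The key missing idea is a minimisation step: if the process converged to some (possibly immersed) free boundary geodesic $\alpha$ lying on one side of $\gamma$, then $\gamma$ and $\alpha$ bound a region $R$ whose boundary is piecewise geodesic or convex. Minimising length among curves in $R$ with endpoints in different components of $\partial R\cap\partial\Omega$ produces an embedded free boundary geodesic $\beta$ which, by the second variation formula, is free boundary \emph{stable}---contradicting $(\star)$ directly. (There is a small case analysis depending on whether $\beta$ coincides with $\alpha$.) You should also note, as the paper does, that the Birkhoff process stays in the half-disc $\Omega_i$ determined by $\gamma$, because $\gamma$ being a free boundary minimising geodesic makes $\Omega_i$ locally convex; this is what guarantees the endpoints move monotonically and the resulting path is a genuine sweepout.
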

\begin{proof}
	Choose a positive orthonormal frame $\{\gamma'(s),X(\gamma(s))\}$ along $\gamma$. Since $(\Omega,g)$ has property $(\star)$, we may extend the vector field given in Lemma \ref{lemfirstjacobi} to a vector field on $\Omega$ with compact support, that is tangent to $\partial \Omega$, and whose flow $\phi$ has the following properties: For every $t\in (-\delta,\delta)$ sufficiently small, the curves $c_{t+1/2}=\phi_{t}(\gamma)$ are a smooth family of smooth curves with extremities in $\partial \Omega$ that is strictly monotone, satisfy $c_{1/2}=\phi_0(\gamma)=\gamma$, and is such that there exists $\theta>0$ so that
	\begin{equation*}
		d(p_t,q_t)\leq L(c_t)\leq L(c_{1/2}) - \theta(t-1/2)^2 = d(p,q) - \theta(t-1/2)^2
	\end{equation*} 
	for all $t$ in a sufficiently small interval $(1/2-\delta,1/2+\delta)$, by virtue of the second variation formula \eqref{eqsecondvariation}. Notice in particular that the extremities $\{p_t,q_t\}$ of $c_t$ satisfy the properties $i)$, $iii)$ and $iv)$, but also $ii)$ for parameters close enough to $t=1/2$.
	
	Thus, in order to prove the proposition, it is enough to extend the smoothly monotone family $\{p_t,q_t\}$, $t\in (1/2-\delta,1/2+\delta)$, to a sweepout of $\partial \Omega$ in such way that $d(p_t,q_t)<d(p,q)$ for every $t\in [0,1]$. This will be achieved by iterating the Birkhoff chord shortening process.
	
	Recall that the Birkhoff chord shortening process describes how to shorten piecewise smooth curves with extremities in $\partial \Omega$, of uniformly bounded length, while keeping the extremities in $\partial \Omega$, in a continuous way. Roughly speaking, the process has two steps. In the first step, the process takes a (possibly immersed) piecewise smooth curve and sends it to a (possibly immersed) piecewise smooth curve that is made of minimising geodesics between its vertices, and that intersects $\partial \Omega$ orthogonally. In the second step, the process produces a continuous interpolation between the initial and the final curve. In both steps, the basic operation is either the substitution of a short piece of the curve by the minimising geodesic between the extremities of the piece, or, in case one of the extremities lies in $\partial \Omega$, it is the substitution of that short piece of the curve by the shortest path between the other vertex and $\partial \Omega$. Thus, along the process, the length of the curve does not increase. We refer the reader to \cite{GluZil}, \cite{Zho} and \cite{DonMon} for detailed descriptions. (\textit{Cf}. Section 2 of \cite{Cro}, where a concise description of the standard Birkhoff curve shortening process in the case of closed curves is given). 
	
	It is well-known that the iteration of the process either terminates at a point, or converges to a free boundary geodesic, which may be immersed. (See, for instance, the Lemma in page 71 in \cite{GluZil}). 
	
	In the situation we are analysing, we have control on where curves move during the process. In fact, denote by $\Omega_1$ and $\Omega_2$ the two closed regions of the disc $\Omega$ determined by $\gamma$, labelled so that $c_{1/2-\delta}$ belongs to $\Omega_1$ and $c_{1/2+\delta}$ belongs to $\Omega_2$. Since $\gamma$ is a free boundary minimising geodesic, both regions are locally convex in the sense that the minimising geodesic joining two points of $\Omega_i$, or joining a point of $\Omega_i$ that is sufficiently close to $\partial\Omega$ to the nearest point in $\partial \Omega$, still lie in $\Omega_i$. Thus, the Birkhoff chord shortening process starting on any of the curves $c_{\pm\delta+1/2}$ will remain in the respective region $\Omega_i$. \\
	
\noindent \textbf{Claim:} The iteration of the Birkhoff chord shortening process starting at both curves $c_{\pm \delta+1/2}$ terminates at points.  \\

	The Claim is proven by contradiction. Suppose it is not true. Then, the Birkhoff chord shortening process starting at $c_{\delta+1/2}$, say, produces an immersed free boundary geodesic $\alpha$ of $(\Omega,g)$. (The other case is analogous). By virtue of the previous remarks on the convexity of $\Omega_2$, the geodesic $\alpha$ actually lies in $\Omega_2$. Also, the extremities of $\alpha$ are disjoint from $\{p,q\}$, because otherwise the two free boundary geodesics $\alpha$ and $\gamma$ would coincide, which is absurd since $L(\alpha)\leq L(c_{1/2+\delta})<L(\gamma)$. Even more, since $\alpha$ lies in one side of $\gamma$, it cannot intersect $\gamma$ at any other points either, because otherwise $\alpha$ and $\gamma$ would have the same tangent at some point and so be the same geodesic.
	
	Thus, there exists a connected region $R\subset \Omega_2$ of the disc $\Omega$ such that $\partial R$ is the union of $\gamma$, pieces of $\alpha$ (or the even the whole $\alpha$ in case $\alpha$ is embedded) and the two arcs of $\partial \Omega$ bounded between the extremities of $\gamma$ and $\alpha$. In particular, $\partial R$ is a piecewise smooth curve, whose pieces are either geodesic ($\gamma$ and pieces of the immersed curve $\alpha$) or have non-negative geodesic curvature (the arcs of $\partial \Omega$), and whose vertices have angles either $\pi/2$ (if the vertex lies in $\partial \Omega$) or strictly smaller than $\pi$ from inside $R$ (if the vertex is a point of self-intersection of $\alpha$).
	
	Next, minimise the length of curves in $R$ with extremities in different components of $\partial R\cap \partial \Omega$. By the previous observations about the local convexity of the region $R$, the standard replacement by broken geodesics yields another minimising sequence which converges to a minimising smooth curve $\beta\subset R$, with positive length.
	
	Since the removal of immersed loops decreases length, it is clear that the minimising curve $\beta$ is embedded. Also, by virtue of the first variation formula of length, it is straighforward to check that $\beta$ is a geodesic that meets $\partial \Omega$ orthogonally. Since $L(\beta)\leq L(\alpha)<L(\gamma)$, arguing similarly as before we conclude that $\beta$ has no point in common with $\gamma$.
	
	To finish the contradiction argument, two cases must be considered. The first case is when the extremities of $\beta$ are also disjoint from the extremities of $\alpha$. In this case, we may use the second variation formula \eqref{eqsecondvariation} together with the minimisation property enjoyed by $\beta$ to conclude that $\beta$ is a free boundary stable geodesic with extremities in different arcs of $\partial R\setminus (\gamma \cup \alpha)$. But this scenario is ruled out by property $(\star)$.
	
	The other possibility is that $\beta$ has at least one extremity in common with $\alpha$. Since these two curves are free boundary geodesics, they must coincide. \textit{A posteriori}, we conclude that $\alpha$ must be embedded, and minimise length among curves that lie inside the region $R$ and have extremities in $\partial R\cap \partial \Omega$. By the second variation formula and Lemma \ref{lemfirstjacobi}, we conclude that $\alpha$ must be a free boundary stable geodesic. And this is again a contradiction with property $(\star)$.
	
	In conclusion, in all cases, the failure of the claim leads to the existence of a free boundary stable geodesic in $(\Omega,g)$, which contradicts property $(\star)$. The claim follows.
	
	In view of the shortening properties of the iterated Birkhoff chord shortening process, it follows immediately from our initial construction and the Claim that a sweepout with the desired properties $i) - iv)$ exist, as we wanted to prove.
\end{proof}

	The next proposition has a similar proof. Notice that it applies, in particular, to a pair of extremal minimising geodesics whose extremities are a non-trivial critical point of $\mathcal{D}$, as soon as none of them is free boundary.

\begin{prop}\label{propstarcritical2}
	Let $(\Omega,g)$ be a totally convex Riemannian disc with smooth boundary in a complete Riemannian surface. Assume that $(\Omega,g)$ has property $(\star)$. Denote by $T$ the unit tangent vector field to $\partial \Omega$ that is compatible with its orientation.
	
	Let $\{p,q\}\subset \partial \Omega$ be a critical point of $\mathcal{D}$ with $p\neq q$. Suppose $p$ and $q$ are joined by two minimising geodesics $\gamma_1$ and $\gamma_2$ such that 
		\begin{equation*} 
			\langle \nu_{\gamma_2}^{T}(p), T(p)\rangle \leq 0 \leq \langle \nu^T_{\gamma_1}(p), T(p)\rangle ,
		\end{equation*} 
		and
		\begin{equation*}
			\langle \nu_{\gamma_1}^{T}(q),T(q)\rangle \leq 0 \leq  \langle \nu^T_{\gamma_2}(q), T(q)\rangle,
		\end{equation*}
		Finally, suppose none of the two geodesics $\gamma_1$ and $\gamma_2$ is free boundary. 
		
		Then there exists a sweepout $\{p_t,q_t\}$ of $\partial\Omega$ that has the following properties:
	\begin{itemize}
		\item[$i)$] $p_{1/2}=p$ and $q_{1/2}=q$.
		\item[$ii)$] $d(p_t,q_t)< d(p,q)$ for every $t\neq 1/2$.
		\item[$iii)$] For all $t\leq 1/2$ sufficiently close to $1/2$, the sweepout is regularly strictly monotone with $\{p_t,q_t\}=\partial b_t$ for a family of curves with $b_{1/2}=\gamma_1$ and $\partial b_{t}$ belonging to the the arc of $\partial \Omega\setminus \{p,q\}$ into which $-T(p)$ and $T(q)$ points for $t<1/2$.
		\item[$iv)$] For all $t\geq 1/2$ sufficiently close to $1/2$, the sweepout is regularly strictly monotone with $\{p_t,q_t\}=\partial c_t$ for a family of curves with  $c_{1/2}=\gamma_2$ and $\partial c_{t}$ belonging to the the arc of $\partial \Omega\setminus \{p,q\}$ into which $T(p)$ and $-T(q)$ points for $t>1/2$.
		\item[$v)$] There exists a constant $\theta>0$ such that $d(p_t,q_t)\leq d(p,q)-\theta|t-1/2|$ near $t=1/2$.
	\end{itemize}	 
\end{prop}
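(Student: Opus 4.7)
The plan is to mirror the structure of the proof of Proposition \ref{propstarcritical1}, with the role of the second variation formula replaced by the first variation formula \eqref{eqfirstvariation}. The decisive new ingredient is that, because neither $\gamma_1$ nor $\gamma_2$ is free boundary, a suitable tangential deformation of the endpoints along $\partial\Omega$ produces a first-order decrease in length rather than only a second-order one, which explains the linear bound in condition $(v)$.

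First, I construct the local families near $t=1/2$. Pick a smooth vector field $V_1$ on $\Omega$, tangent to $\partial\Omega$, with $V_1(p)=-T(p)$ and $V_1(q)=T(q)$, and chosen so that along $\gamma_1\setminus\{p,q\}$ it points strictly into the closed region $R_-$ bounded by $\gamma_1$ and the arc $A_-\subset\partial\Omega\setminus\{p,q\}$ into which $-T(p)$ and $T(q)$ point. Setting $b_{1/2-s}:=\phi_s(\gamma_1)$, where $\phi_s$ is the flow of $V_1$, the first variation formula \eqref{eqfirstvariation} gives
\begin{equation*}
\frac{d}{ds}\Big|_{s=0} L(\phi_s(\gamma_1)) = \langle T(q),\nu_{\gamma_1}^T(q)\rangle - \langle T(p),\nu_{\gamma_1}^T(p)\rangle,
\end{equation*}
which is non-positive by the angle hypotheses on $\gamma_1$, and strictly negative because $\gamma_1$ is not free boundary (so at least one of the two boundary angles is nonzero). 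Hence there exist $\theta_1,\delta>0$ with $L(b_t)\leq L(\gamma_1)-\theta_1(1/2-t)$ on $[1/2-\delta,1/2]$, and by the choice of $V_1$ the family $\{b_t\}$ is regularly strictly monotone with endpoints sweeping an arc of $A_-$. An analogous construction, using a vector field $V_2$ with $V_2(p)=T(p)$ and $V_2(q)=-T(q)$ that pushes $\gamma_2$ into the region $R_+$ bounded by $\gamma_2$ and $A_+$, yields $\{c_t\}$ on $[1/2,1/2+\delta]$ with the symmetric properties. Together these establish $(i)$ and $(iii)$--$(v)$ on a neighbourhood of $t=1/2$, and $(ii)$ locally.

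Next, I extend $\{p_t,q_t\}$ to a full sweepout on $[0,1]$ by applying the iterated Birkhoff chord shortening process to $b_{1/2-\delta}$ inside $R_-$ and to $c_{1/2+\delta}$ inside $R_+$, exactly as in the proof of Proposition \ref{propstarcritical1}. Since $\gamma_1$ and $\gamma_2$ are minimising geodesics, both regions $R_\pm$ are locally convex and therefore invariant under the process. By property $(\star)$ and the same contradiction argument used to prove the Claim within Proposition \ref{propstarcritical1}, the process cannot converge to an immersed free boundary geodesic, so it must terminate at a single point in each region. Length monotonicity of Birkhoff, together with the strict bounds $L(b_{1/2-\delta}),L(c_{1/2+\delta})<d(p,q)$, yields $(ii)$ globally, and the concatenated family is a genuine sweepout because its two collapse points lie in distinct arcs $A_-$ and $A_+$ of $\partial\Omega\setminus\{p,q\}$, so the enclosed arc $C_t\subset\partial\Omega$ grows continuously from a single point up to all of $\partial\Omega$.

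The main obstacle is the Birkhoff termination step inside $R_\pm$: structurally the argument is identical to the one in Proposition \ref{propstarcritical1}, but the ``upper'' boundary of each region is now a minimising (rather than free boundary) geodesic, so a small bookkeeping adjustment is needed for the angles at the vertices of the piecewise smooth subregions appearing in the contradiction argument. This is not a serious difficulty, because the length-minimisation used there only requires the bounding pieces to be either geodesic or to have non-negative geodesic curvature relative to the region, which holds on both sides. A secondary concern is securing the regular strict monotonicity of $\{b_t\}$ and $\{c_t\}$, which follows by choosing the supports of $V_1$ and $V_2$ sufficiently small and ensuring strictly nonzero tangential components at $p$ and $q$.
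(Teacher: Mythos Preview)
Your proposal is correct and follows essentially the same approach as the paper: push $\gamma_1$ into the region $\Omega_1$ (your $R_-$) and $\gamma_2$ into $\Omega_2$ (your $R_+$) using vector fields with boundary values $\mp T(p)$, $\pm T(q)$, obtain the linear decrease in $(v)$ from the first variation formula and the non-free-boundary hypothesis, and then extend to a full sweepout via the Birkhoff chord shortening process, ruling out convergence to a free boundary geodesic by property $(\star)$ exactly as in Proposition~\ref{propstarcritical1}. The only point the paper treats that you gloss over is the degenerate sub-case where one of the $\gamma_i$ is an arc of $\partial\Omega$, in which case the extension on that side is trivial (shrink the arc to a point) and Birkhoff is not invoked; your angle bookkeeping remark correctly anticipates the one genuine adjustment, namely that $\alpha$ cannot share an endpoint with $\gamma_i$ because $\gamma_i$ meets $\partial\Omega$ at angle $\leq\pi/2$ from within $\Omega_i$ while $\alpha$ would meet it orthogonally.
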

\begin{proof}
	Let $\Omega_1$ be the closed component of $\Omega$ determined by $\gamma_1$ that does not contain $\gamma_2$, and let $\Omega_2$ be the closed component of $\Omega$ determined by $\gamma_2$ that does not contain $\gamma_1$.
	
	Since neither $\gamma_1$ nor $\gamma_{2}$ is free boundary, it follows from the assumptions that 
	\begin{equation}\label{eqauxz1}
		\langle -T(p),\nu^T_{\gamma_1}(p) \rangle + \langle T(q),\nu^T_{\gamma_1}(q) \rangle < 0
	\end{equation}
and
	\begin{equation}\label{eqauxz2}
		\langle T(p),\nu^T_{\gamma_2}(p) \rangle + \langle -T(q),\nu^T_{\gamma_2}(q) \rangle < 0.
	\end{equation}
	
	Notice that the vectors $-T(p)$ and $T(q)$ point towards $\Omega_1$. Extend these two vectors to a vector field $X$ along $\gamma_1$ in such way that $X$ points towards $\Omega_1$ at all points, and then extend this vector field to a vector field of $\Omega$ that is tangent to $\partial \Omega$ and vanishes outside a compact set. The flow of this vector field, $\phi$, generates curves $b_{t}=\phi_{1/2-t}(\gamma_1)$ with extremities $\{p_t,q_t\}$ in $\partial \Omega$ that are mutually disjoint and lie in $\Omega_1$, at least for all $t\in [1/2-\delta,1/2]$, for some $\delta>0$ sufficiently small. By construction, the family $\{b_t\}$ is strictly monotone. (Notice that if $\gamma_1$ is proper, then all $b_t$ are proper if $\delta$ is sufficiently small, whereas if $\gamma_1$ is an arc of $\partial \Omega$, then all $b_t$ are boundary arcs as well).
	
	By the first variation formula \eqref{eqfirstvariation} at the geodesic $\gamma_1=b_{1/2}$, and by \eqref{eqauxz1}, there exists some $\theta>0$ such that
\begin{equation*}
	d(p_t,q_t)\leq L(b_t) \leq L(\gamma_1)-\theta(1/2-t)=d(p,q)-\theta(1/2-t)
\end{equation*}
for all $t\in[1/2-\delta,1/2]$, possibly after decreasing $\delta$ if necessary. Thus, in particular, the extremities $\{p_t,q_t\}=\partial b_t$ satisfy properties $i)$, $ii)$, $iii)$ and $v)$ for all $t\in[1/2-\delta,1/2]$.

	Arguing similarly about $\gamma_2$, we can extend the vectors $T(p)$ and $-T(q)$, that point towards $\Omega_2$, to some vector field in $\Omega$ with compact support, tangent to $\partial \Omega$, and whose flow, $\psi$, will generate curves $c_{t}=\psi_{t-1/2}(\gamma_2)$ with extremities $\{p_t,q_t\}$ in $\partial \Omega$ that are mutually disjoint and lie in $\Omega_2$, at least if $t\geq 1/2$ is small enough. Arguing as before using the first variation formula \eqref{eqfirstvariation} and \eqref{eqauxz2}, we check that the strictly monotone family $\{p_t,q_t\}=\partial c_t$ satisfies properties $i)$, $ii)$, $iv)$ and $v)$ for all $t\in[1/2,1/2+\delta]$, after possibly decreasing $\delta$ and $\theta$.
	
	Gluing together these two families, we construct a continuous family $\{p_t,q_t\}\subset \partial \Omega$, $t\in [1/2-\delta,1/2+\delta]$, that satisfies properties $i)$, $iii)$, $iv)$ and $v)$, and thus also $ii)$ for parameters close enough to $t=1/2$.
	
	If one of the geodesics $\gamma_i$ is an arc of $\partial \Omega$, it is obvious how to continue the sweepout on the side of $\Omega_i$ without ever increasing the distance between the points of the sweepout: just shrink the arc inside $\partial \Omega \cap \Omega_i$ continuously to a point. Thus, in order to prove the proposition, it remains to consider the case where one of the extremal minimising geodesics, say $\gamma_2$, is not an arc of $\partial \Omega\cap \Omega_2$. (Up to relabelling, the argument is the same if the other geodesic is not a boundary arc).
	
	By the angle conditions between $\gamma_2$ and $\partial \Omega$, it is clear that $\Omega_2$ is locally convex in the sense that that the minimising geodesic joining two points of $\Omega_2$, or joining a point of $\Omega_2$ sufficiently close to $\partial \Omega$ to the nearest point in $\partial \Omega$, still lies in $\Omega_2$. Thus, the Birkhoff chord shortening process starting at the curve $c_{\delta+1/2}\subset \Omega_2$ will stay in the region $\Omega_2$. 

	If the process terminates at a point, then we have a family of boundary points to form (half of a) sweepout with the desired properties. If it does not, then the process terminates at an immersed free boundary geodesic $\alpha$ in $\Omega_2$. By assumption, $\gamma_2$ forms angles $\leq \pi/2$ with $\partial \Omega$ from within $\Omega_2$. Hence, $\alpha$ cannot have an extremity in common with $\gamma_2$, otherwise they would coincide, a contradiction with the assumption that $\gamma_2$ is not free boundary. Since $\alpha$ lies on one side of $\gamma_2$, by the same reason it cannot touch it at any interior point either.
	
	Thus, $\gamma_2$ and $\alpha$ determine a region $R$ whose boundary consists of arcs of geodesics or curves with non-negative geodesic curvature, which moreover meet at angles $\pi/2$ at $\partial \Omega$ and strictly less than $\pi$ from within $R$ at self-intersection points of $\alpha$. Minimise the length of curves in $R$ with extremities in different components of $R\cap \partial \Omega$. By the convexity properties of $R$, there exists a minimising curve $\beta\subset R$ in this class with positive length.	
	
	As argued in the proof of the Claim in Proposition \ref{propstarcritical1}, $\beta$ is an embedded free boundary geodesic that has no point in common with $\gamma_2$. If the extremities of $\beta$ are disjoint from the extremities of $\alpha$, then by the second variation formula \eqref{eqsecondvariation} and the minimisation property enjoyed by $\beta$ we conclude that $\beta$ is a free boundary stable geodesic with extremities in different arcs of $\partial R\setminus (\gamma_2 \cup \alpha)$, a contradiction with by property $(\star)$. The other possibility is that $\beta$ has at least one extremity in common with $\alpha$, in which case these two free boundary geodesics coincide. \textit{A posteriori}, we conclude that $\alpha$ is embedded, and has the least length among curves that lie inside the region $R$ and have extremities in different components $\partial R\cap \partial \Omega$. By the second variation formula \eqref{eqsecondvariation} and Lemma \ref{lemfirstjacobi}, $\alpha$ must be a free boundary stable geodesic, which is again a contradiction with property $(\star)$.
	
	Therefore, in any of the cases considered the family $\{p_t,q_t\}$ can indeed be extended, via the Birkhoff curve shortening process, to a sweepout of $\partial \Omega$ so that $t=1/2$ is the only maximum of $d(p_t,q_t)$, and all other properties described in $i)-v)$ are satisfied. This finishes the proof.
\end{proof}

\begin{prop} \label{propstarcriticalindex}
	Let $(\Omega,g)$ be a totally convex Riemannian disc with smooth boundary in a complete Riemannian surface. Let $\{p,q\}\subset \partial \Omega$ be a critical point of $\mathcal{D}$ with $d(p,q)>0$. Assume $\{p_t,q_t\}$ is a sweepout of $\partial \Omega$ that satisfies properties $i)-iv)$ described in Proposition \ref{propstarcritical1}.
	
	If $p$ and $q$ are joined by a free boundary minimising geodesic $\gamma$ of free boundary index $\geq 2$, then there exists a sweepout $\{p_t',q_t'\}$ of $\partial \Omega$ such that $d(p_t',q_t')< d(p,q)$ for every $t\in [0,1]$.
\end{prop}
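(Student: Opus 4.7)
The plan is to exploit the extra negative direction of the index form $Q$ provided by the hypothesis $\mathrm{index}(\gamma) \geq 2$ to construct a local $2$-parameter deformation of $\gamma$ all of whose curves (except $\gamma$ itself) are strictly shorter than $\gamma$, and then detour the given sweepout around the maximum at $t = 1/2$ using this $2$-parameter family.

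First, I set $Y = dc_t/dt|_{t = 1/2}$ and let $Y^{\perp}$ be its normal component along $\gamma$. Since $\gamma$ is free boundary and $c_t$ has extremities in $\partial \Omega$, the tangential part of $Y$ vanishes at $\{p, q\}$, so by the second variation formula \eqref{eqsecondvariation} combined with property $iv)$ one has $Q(Y^{\perp}, Y^{\perp}) < 0$. Since the $Q$-negative subspace of $\Gamma(N\gamma)$ has dimension at least two, the $Q$-orthogonal hyperplane of $Y^{\perp}$ meets this subspace in a nonzero vector $Z$, linearly independent from $Y^{\perp}$, with $Q(Z, Z) < 0$ and $Q(Y^{\perp}, Z) = 0$.

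Because $Z$ is normal to the free boundary geodesic $\gamma$, its endpoint values $Z(p), Z(q)$ lie automatically in $T_p \partial \Omega$ and $T_q \partial \Omega$. I would extend $Z$ to a compactly supported ambient vector field $\widetilde{Z}$ on $\Omega$ tangent to $\partial \Omega$ whose normal component along $\gamma$ equals $Z$, and let $\psi_s$ denote its flow. Define the $2$-parameter family $\gamma_{t, s} = \psi_s(c_t)$ for $(t, s)$ near $(1/2, 0)$, so $\gamma_{1/2, 0} = \gamma$. This is a critical point of $L$, and by the computation of its Hessian in the basis $\{\partial_t, \partial_s\}$, using the second variation formula, it equals $\mathrm{diag}(Q(Y^{\perp}, Y^{\perp}), Q(Z, Z))$, which is negative definite. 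Hence there exist $c_0, \delta_0 > 0$ such that
\[
   L(\gamma_{t, s}) \leq L(\gamma) - c_0\bigl((t - 1/2)^2 + s^2\bigr) \quad \text{whenever} \quad |t - 1/2|, |s| < \delta_0.
\]

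Finally, I would fix $\epsilon \in (0, \delta_0)$ so small that the original sweepout is regularly strictly monotone on $[1/2 - \epsilon, 1/2 + \epsilon]$ and the $2$-parameter family is defined there, and pick a smooth bump function $\rho : [1/2-\epsilon, 1/2+\epsilon] \to [0, \delta_0)$ vanishing at the endpoints of its domain with $\rho(1/2) > 0$. Set $\{p'_t, q'_t\} = \partial \gamma_{t, \rho(t)}$ for $t \in [1/2-\epsilon, 1/2+\epsilon]$, and $\{p'_t, q'_t\} = \{p_t, q_t\}$ elsewhere. Continuity at $t = 1/2 \pm \epsilon$ follows from $\rho = 0$ there. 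For the distance: on $[1/2 - \epsilon, 1/2 + \epsilon]$, $(t - 1/2, \rho(t)) \neq (0, 0)$, so $d(p'_t, q'_t) \leq L(\gamma_{t, \rho(t)}) < L(\gamma) = d(p, q)$; outside that interval, $d(p'_t, q'_t) = d(p_t, q_t) < d(p, q)$ by property $ii)$ of the original sweepout. The delicate point I expect to be the main obstacle is verifying that the modified family remains a sweepout in the sense of Definition \ref{defiweepout}: one must exhibit continuous arcs $C'_t$ with $C'_0 = \{p_0\}$ and $C'_1 = \partial \Omega$ whose boundaries are $\{p'_t, q'_t\}$. This reduces to showing that the modification is a local homotopy of the original loop within a $2$-dimensional (hence contractible) neighborhood of $\{p, q\}$ in $\mathcal{P}_*$, so that the arcs $C_t$ of the original regularly strictly monotone piece perturb continuously to the required arcs $C'_t$ and the homotopy class of the loop in $\mathcal{P}_*$ is preserved.
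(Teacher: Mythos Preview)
Your proposal is correct and follows essentially the same strategy as the paper: build a two-parameter family $\psi_s(c_t)$ whose length has a strict local maximum at $(1/2,0)$, then detour the sweepout around that maximum via a bump function $s=\rho(t)$. The only difference is how the second negative direction is selected: the paper takes a two-dimensional $Q$-negative subspace containing the direction coming from property~$iv)$ and picks $Y$ in it with boundary values linearly independent from $(\dot p_{1/2},\dot q_{1/2})$, whereas you pick $Z$ to be $Q$-orthogonal to $Y^{\perp}$ inside a maximal negative subspace, which makes the Hessian diagonal and its negative definiteness immediate. Both choices work; yours is slightly cleaner for computing the Hessian, and your explicit discussion of why the modified path remains a sweepout (local homotopy in a contractible patch of $\mathcal{P}_*$) addresses a point the paper leaves implicit.
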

\begin{proof}
	By the hypotheses, there exists a sweepout $\{p_t,q_t\}$ of $\partial \Omega$ that satisfies $i)$, $ii)$, $iii)$ and $iv)$ in Proposition \ref{propstarcritical1}. In particular, in small interval $(1/2-\delta,1/2+\delta)$, the sweepout $\{p_t,q_t\}$ is regularly strictly monotone, that is, $\{p_t,q_t\}=\partial c_t$ with $c_{1/2}=\gamma$ for some smoothly varying strictly monotone family of smooth curves $c_t$.

	By construction, and by the second variation formula \eqref{eqsecondvariation}, $L(c_t)$ is a function that has a critical point at $t=1/2$, and a strictly negative second derivative at that point, equal to $Q(uX,uX)<0$, for $u>0$ solving \eqref{eqfirstjacobieigen}, see Proposition \ref{propstarcritical1}. Since we assume the free boundary index of $\gamma$ is at least two, there exists $Y\in \Gamma(N\gamma')$ linearly independent with $uX$ such that $Q(uX,Y)=0$ and the quadratic form $Q$ is negative definite in the linear space they generate. Moreover, since $\gamma$ is stable for variations with fixed extremities, 
	\begin{equation*}
		(Y(p),Y(q)) \quad \text{and} \quad \left(\frac{d}{dt}_{|_{t=1/2}}p_t,\frac{d}{dt}_{|_{t=1/2}}q_t\right) = (u(0)X(0),u(a)X(a))
	\end{equation*}
are linearly independent vectors in $T_p\partial\Omega\times T_q\partial\Omega$. Extend $Y$ to a vector field in $\Omega$ with compact support that is tangent to $\partial \Omega$, and denote by $\psi_s$ the flow generated by $Y$.
	
	The curves $\psi_s\circ c_t$ lie within $\Omega$ and have extremities $\{p_t^s,q_t^s\}$ in $\Gamma$. By construction, $L(\psi_s(c_t))$ is a function of two variables $(t,s)\in (1/2-\delta,1/2+\delta)\times (-\infty,\infty)$ that is smooth, has a critical point at $(1/2,0)$ (by the first variation formula \eqref{eqfirstvariation}, since $c_0=\gamma$ is a free boundary geodesic), and its Hessian is negative definite at $(1/2,0)$ (by the second variation formula \eqref{eqsecondvariation}, property $iv)$ and our choice of vector field $Y$). Thus, $(t,s)=(1/2,0)$ is a unique maximum. Hence, possibly after deceasing $\delta$ if necessary, we can find a continuous function $s : t\in [0,1] \rightarrow [0,\eta)$, that is zero outside the interval $[1/2-\delta,1/2+\delta]$, so that 
	\begin{equation*}
		d(p_{t}^{s(t)},q_{t}^{s(t)})<d(p,q) \quad \text{for all} \quad t\in [1/2-\delta,1/2+\delta].
	\end{equation*}
	
	The family $\{p_t^{s(t)},q_t^{s(t)}\}$, which coincides with the original one for $t\notin [1/2-\delta,1/2+\delta]$ by construction, is then a sweepout of $\partial \Omega$ such that 
	\begin{equation*}
		\max_{t\in[0,1]} d(p_t^{s(t)},q_t^{s(t)}) < d(p,q),
	\end{equation*}  
	as we wanted to construct.
\end{proof}

\begin{prop}  \label{propstarcriticalsimultaneouslystationary}
	Let $(\Omega,g)$ be a totally convex Riemannian disc with smooth boundary in a complete Riemannian surface. Assume $(\Omega,g)$ has property $(\star)$. Denote by $T$ be the unit tangent vector field to $\partial \Omega$ that is compatible with its orientation.
	
	Let $\{p,q\}\subset \partial \Omega$ be a critical point of $\mathcal{D}$ with $p\neq q$. Suppose $p$ and $q$ are joined by two minimising geodesics $\gamma_1$ and $\gamma_2$, neither of them free boundary, such that 
		\begin{equation*} 
			\langle \nu_{\gamma_2}^{T}(p), T(p)\rangle \leq 0 \leq \langle \nu^T_{\gamma_1}(p), T(p)\rangle ,
		\end{equation*} 
		and
		\begin{equation*}
			\langle \nu_{\gamma_1}^{T}(q),T(q)\rangle \leq 0 \leq  \langle \nu^T_{\gamma_2}(q), T(q)\rangle.
		\end{equation*}
	
	Assume $\{p_t,q_t\}$ is a sweepout of $\partial \Omega$ that satisfies properties $i)-v)$ described in Proposition \ref{propstarcritical2} with respect to the geodesics $\gamma_1$ and $\gamma_2$.
	
	If $\gamma_1$ and $\gamma_2$ are not simultaneously stationary, then there exists a sweepout $\{p_t',q_t'\}$ of $\partial \Omega$ such that $d(p_t',q_t')< d(p,q)$ for every $t\in [0,1]$.
\end{prop}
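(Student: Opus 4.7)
My strategy is to improve the given sweepout locally near $t=1/2$ by perturbing the pair $(p,q)$ in a boundary direction that strictly decreases the lengths of both $\gamma_1$ and $\gamma_2$ to first order. The non-simultaneous-stationarity will supply such a direction, and the sweepout of Proposition \ref{propstarcritical2}, which already decreases along the curves $b_t,c_t$ with slope $\theta$ at $t=1/2$, will absorb this extra transverse decrease into a uniformly smaller maximum.

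Let $a_i := \langle \nu^T_{\gamma_i}(p), T(p)\rangle$ and $b_i := \langle \nu^T_{\gamma_i}(q), T(q)\rangle$. The angle hypotheses give $a_1 \geq 0 \geq a_2$ and $b_2 \geq 0 \geq b_1$; the assumption that neither $\gamma_i$ is free boundary reads $(a_i,b_i)\neq (0,0)$; and, in these coordinates, simultaneous stationarity of $\gamma_1,\gamma_2$ is precisely the condition $(a_1,b_1)=-c(a_2,b_2)$ for some $c>0$. Under its negation, the two nonzero open half-planes $\{(\alpha,\beta):\alpha a_i + \beta b_i < 0\}\subset \mathbb{R}^2$ are not antiparallel, so an elementary argument produces $\alpha,\beta\in\mathbb{R}$ with
\[
\alpha a_1 + \beta b_1 < 0 \quad\text{and}\quad \alpha a_2+\beta b_2 < 0.
\]

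Next I extend $\alpha T(p)$ and $\beta T(q)$ to a smooth vector field $X$ on $\Omega$, tangent to $\partial\Omega$, supported in a small neighborhood of $\{p,q\}$, and denote its flow by $\phi_s$. Consider
\[
F_1(t,s):=L(\phi_s(b_t)),\qquad F_2(t,s):=L(\phi_s(c_t))
\]
as smooth functions on small closed rectangles $[1/2-\delta,1/2]\times[0,\eta]$ and $[1/2,1/2+\delta]\times[0,\eta]$, respectively. By the first variation formula \eqref{eqfirstvariation}, $\partial_s F_i(1/2,0) = \alpha a_i+\beta b_i =: -\mu_i < 0$; by continuity, $\partial_s F_i \leq -\mu_i/2$ on each rectangle after possibly shrinking it. Together with Proposition \ref{propstarcritical2}(v), which gives $F_i(t,0)\leq d(p,q) - \theta|t-1/2|$, this yields on each rectangle
\[
F_i(t,s) \leq d(p,q) - \theta|t-1/2| - (\mu_i/2)\,s,
\]
which is strictly smaller than $d(p,q)$ whenever $(t,s)\neq(1/2,0)$.

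Finally, I choose a continuous cutoff $s:[0,1]\to[0,\eta]$ supported in $[1/2-\delta,1/2+\delta]$ with $s(1/2)=\eta/2>0$, and set
\[
\{p_t',q_t'\} := \{\phi_{s(t)}(p_t),\phi_{s(t)}(q_t)\},\qquad C_t' := \phi_{s(t)}(C_t).
\]
Since $X$ is tangent to $\partial\Omega$, each $\phi_{s(t)}$ restricts to an orientation-preserving homeomorphism of $\partial\Omega$, so the $C_t'$ form a continuous family of boundary arcs with the correct endpoints, $C_0'=\{p_0\}$, and $C_1'=\Gamma$, showing that $\{p_t',q_t'\}$ is a sweepout. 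For $|t-1/2|\geq \delta$ the family is unchanged and $d(p_t',q_t')<d(p,q)$ by Proposition \ref{propstarcritical2}(ii); for $|t-1/2|<\delta$, the estimate above gives $d(p_t',q_t')\leq F_i(t,s(t))<d(p,q)$ because $(t,s(t))\neq(1/2,0)$. Therefore $\max_{t\in[0,1]} d(p_t',q_t') < d(p,q)$, as required. The main technical point is the linear-algebra step translating ``not simultaneously stationary'' into a common descent direction respecting the sign conventions of Proposition \ref{propextremalgeodesics}; everything else is a controlled first-variation/flow computation in the extra parameter $s$, in the spirit of the Hessian argument of Proposition \ref{propstarcriticalindex}, but simpler here because two distinct minimising geodesics are available to absorb all the first-order behaviour.
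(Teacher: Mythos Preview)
Your proof is correct and follows essentially the same route as the paper: translate ``not simultaneously stationary'' into the existence of a boundary direction $(v_1,v_2)=(\alpha T(p),\beta T(q))$ along which the first variation of length is strictly negative for \emph{both} $\gamma_1$ and $\gamma_2$, extend to a tangential vector field, flow the curves $b_t$ and $c_t$ by a small amount $s(t)$, and verify that the resulting sweepout has maximum strictly below $d(p,q)$. One minor citation issue: property (v) of Proposition~\ref{propstarcritical2} bounds $d(p_t,q_t)$, not $L(b_t)$ or $L(c_t)$; the inequality $F_i(t,0)\le d(p,q)-\theta|t-1/2|$ that you use comes from the \emph{construction} in the proof of that proposition rather than from its statement, so you should cite it accordingly (or simply note that $F_i(t,0)\le d(p,q)$ with equality only at $t=1/2$, which already suffices once $s(1/2)>0$).
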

\begin{proof}
	By the hypotheses, there exists a sweepout $\{p_t,q_t\}$ of $\partial \Omega$ that satisfies $i)$, $ii)$, $iii)$ and $iv)$ in Proposition \ref{propstarcritical2}. In particular, in small intervals $(1/2-\delta,1/2]$ and $[1/2,1/2+\delta)$, the sweepout $\{p_t,q_t\}$ is regularly strictly monotone, where the family of curves $b_t$ with $\partial b_t=\{p_t,q_t\}$ ends at $\gamma_1$ in the first interval, and the family of curves $c_t$ with $\partial c_t=\{p_t,q_t\}$ starts at $\gamma_2$ in the second interval.
	
	Suppose, by contradiction, that $\gamma_1$ and $\gamma_2$ are not simultaneously stationary. Then, there exists $v_1\in T_p\partial \Omega$ and $v_2\in T_q\partial \Omega$ such that
	\begin{equation}\label{eqauxx1}
		\langle v_1,\nu^T_{\gamma_1}(p) \rangle + \langle v_2,\nu^T_{\gamma_1}(q) \rangle < 0
	\end{equation}
and
	\begin{equation}\label{eqauxx2}
		\langle v_1,\nu^T_{\gamma_2}(p) \rangle + \langle v_2,\nu^T_{\gamma_2}(q) \rangle < 0.
	\end{equation}
	Extend $v_1$ and $v_2$ to tangent vector field $X$ on $\Omega$, that is tangent to $\partial \Omega$ on boundary points and whose support intersected with $\partial \Omega$ is contained in the union of the arcs of $\partial \Omega$ covered by the points $p_t$ and $q_t$ for every $t\in [1/2-\delta,1/2+\delta]$. (There is no ambiguity because $p\neq q$).
	
	Let $\phi_s$ be the flow of $X$ on $\Omega$, and write $\phi_s(\{p_t,q_t\})=\{p^s_t,q_t^s\}$ for all $t\in [0,1]$ and all $s\in [0,+\infty)$. Notice that $\{p_t^s,q_t^s\}=\{p_t,q_t\}$ if $t\notin [1/2-\delta,1/2+\delta]$.
	
	The function $L(\phi_{s}(c_t))$ is a smooth function in the variables $(t,s)\in [1/2,1]\times [0,+\infty)$ near $(t,s)=(1/2,0)$. It follows immediately from \eqref{eqauxx2} and the first variation formula \eqref{eqfirstvariation} that the curve $c_{1/2}=\gamma_2$ varies by curves $\phi_s(c_{1/2})$ in such way that the first derivative of the length in the $s$ variable is strictly negative at $s=0$. By continuity, the same assertion about the variation of $c_t$ by the  curves $\phi_s(c_{t})$ is true for all $t\geq 1/2$ sufficiently close to $t=1/2$. Hence, for some small $0<\delta'<\delta$ and some small $\eta>0$, 
	\begin{equation*}
		d(p^s_t,q_t^s)\leq L(\phi_s(c_t))<L(c_t)\leq d(p,q)
	\end{equation*}
	for all $t\in[1/2,1/2+\delta']$ and $0<s<\eta$.
	
	By a similar reasoning about the function $L(\phi_{s}(b_t))$ in the variables $(t,s)\in[0,1/2]\times [0+\infty)$, which is smooth near $(t,s)=(1/2,0)$, the curve $b_{1/2}=\gamma_1$ varies by curves $\phi_s(b_{1/2})$ in such way that the first derivative of the length in the $s$ variable at $s=0$ is strictly negative, as a consequence of \eqref{eqauxx1} and the first variation formula \eqref{eqfirstvariation}. By continuity, the same is true about the variation of $b_t$ by the curves $\phi_s(b_{t})$ for all $t\geq 1/2$ sufficiently close to $t=1/2$. Thus, decreasing $\delta'$ and $\eta$ if necessary, we guarantee that the inequalities 
	\begin{equation*}
		d(p^s_t,q_t^s)\leq L(\phi_s(b_t))<L(b_t)\leq d(p,q)
	\end{equation*}
	holds as well for all $t\in[1/2-\delta',1/2]$ and $0<s<\eta$.
	
	Therefore the continuous map 
	\begin{equation*}
		(t,s)\in [1/2-\delta',1/2+\delta']\times [0,\eta]\mapsto d(p_t^s,q_t^s)\in [0,d(p,q)]
	\end{equation*}
	is such that $(1/2,0)$ is the unique maximum $d(p,q)$. Now, it is possible to chose some continuous function $s : t\in [0,1] \rightarrow [0,\eta)$, that is zero outside the interval $[1/2-\delta',1/2+\delta']$, in such way that 
	\begin{equation*}
		d(p_{t}^{s(t)},q_{t}^{s(t)})<d(p,q) \quad \text{for all} \quad t\in [1/2-\delta',1/2+\delta'].
	\end{equation*}
	
	The family $\{p_t^{s(t)},q_t^{s(t)}\}$, which coincides with the original one for $t\notin [1/2-\delta',1/2+\delta']$, is therefore a sweepout of $\partial \Omega$ such that 
	\begin{equation*}
		\max_{t\in[0,1]} d(p_t^{s(t)},q_t^{s(t)}) < d(p,q).
	\end{equation*}
\end{proof}

	We are now ready for the proof of Theorem B.

\begin{thm} \label{thmBbis}
	Let $(\Omega,g)$ be a totally convex Riemannian disc with smooth boundary in some Riemannian manifold. Assume $(\Omega,g)$ has property $(\star)$. Then every critical point $\{x,y\}\subset \partial \Omega$ of $\mathcal{D}$ with $x\neq y$ is such that
	\begin{equation*}
		d(x,y)\geq \mathcal{S}(\partial \Omega).
	\end{equation*} 
	Moreover, let $\{p,q\}$ be a critical point with $d(p,q)= \mathcal{S}(\partial\Omega)$. Then,
	\begin{itemize}
		\item[$i)$] if there exists a free boundary minimising geodesic joining $p$ and $q$, then this geodesic has free boundary index one.
		\item[$ii)$] if two minimising geodesics $\gamma_1$ and $\gamma_2$ joining the points $p$ and $q$ satisfy $\langle \nu_{\gamma_1}^{T}(p), \nu^T_{\gamma_2}(p)\rangle \leq 0$ and $\langle \nu_{\gamma_1}^{T}(q), \nu^T_{\gamma_2}(q)\rangle \leq 0$, and none of them is free boundary, then $\gamma_1$ and $\gamma_2$ are simultaneously stationary.
	\end{itemize}
\end{thm}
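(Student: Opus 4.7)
My plan is to argue by contradiction using the four sweepout constructions \ref{propstarcritical1}, \ref{propstarcritical2}, \ref{propstarcriticalindex} and \ref{propstarcriticalsimultaneouslystationary}, each of which turns information at a critical point into a sweepout whose maximum of $\mathcal{D}$ is directly comparable to $\mathcal{S}(\partial\Omega)$.

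For the first assertion, suppose some critical point $\{x,y\}\subset\partial\Omega$ with $x\neq y$ satisfies $d(x,y)<\mathcal{S}(\partial\Omega)$. If $x$ and $y$ are joined by some free boundary minimising geodesic, Proposition \ref{propstarcritical1} gives a sweepout $\{p_t,q_t\}$ whose maximum is exactly $d(x,y)<\mathcal{S}(\partial\Omega)$, contradicting the definition of $\mathcal{S}(\partial\Omega)$. If instead no minimising geodesic between $x$ and $y$ is free boundary, then the two extremal minimising geodesics $\gamma_+, \gamma_-$ must be distinct, since at a non-trivial critical point with a unique minimising geodesic that geodesic is forced to be free boundary. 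By Proposition \ref{propextremalgeodesics}, the pair $(\gamma_-,\gamma_+)$ then satisfies exactly the angle hypothesis of Proposition \ref{propstarcritical2}, and that proposition produces a sweepout attaining the same contradiction.

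For part $i)$, let $\{p,q\}$ be critical with $d(p,q)=\mathcal{S}(\partial\Omega)$ and let $\gamma$ be a free boundary minimising geodesic joining $p$ and $q$. Property $(\star)$ rules out free boundary index zero. If the index were at least two, I would apply Proposition \ref{propstarcritical1} to obtain a sweepout satisfying properties $i)$--$iv)$ there and then use Proposition \ref{propstarcriticalindex} to perturb it to one with maximum strictly less than $\mathcal{S}(\partial\Omega)$, a contradiction; hence the index equals one.

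For part $ii)$, suppose $\gamma_1,\gamma_2$ satisfy the stated hypothesis, neither is free boundary, and they are not simultaneously stationary. The symmetric condition $\langle\nu_{\gamma_1}^T,\nu_{\gamma_2}^T\rangle\leq 0$ at $p$ and at $q$ lets me swap $\gamma_1\leftrightarrow\gamma_2$ so that $\langle\nu_{\gamma_1}^T(p),T(p)\rangle\geq 0\geq\langle\nu_{\gamma_2}^T(p),T(p)\rangle$. The crucial geometric observation is that two distinct minimising geodesics between $p$ and $q$ in the disc cannot intersect in $\mathrm{int}(\Omega)$, hence bound a region and lie on opposite sides of one another; the side into which $\gamma_i$ initially enters at $p$ is the same side from which it approaches $q$, which forces automatically $\langle\nu_{\gamma_1}^T(q),T(q)\rangle\leq 0\leq\langle\nu_{\gamma_2}^T(q),T(q)\rangle$. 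The hypothesis of Proposition \ref{propstarcritical2} is then satisfied and yields a sweepout with properties $i)$--$v)$ of that proposition; Proposition \ref{propstarcriticalsimultaneouslystationary} then perturbs it into a sweepout whose maximum lies strictly below $\mathcal{S}(\partial\Omega)$, again contradicting its definition. I expect the main obstacle to be this relabelling-and-sign-alignment step: the hypothesis is perfectly symmetric in $\gamma_1,\gamma_2$ while Proposition \ref{propstarcritical2} requires a definite matched sign pattern at \emph{both} endpoints, so one must rule out the ``crossing'' configurations via the non-intersection property of minimising geodesics in a disc before the machinery can be applied.
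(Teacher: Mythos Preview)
Your proposal is correct and follows essentially the same route as the paper: use Propositions \ref{propstarcritical1} and \ref{propstarcritical2} to get sweepouts whose maxima equal $d(x,y)$ at any non-trivial critical point (giving the inequality), then invoke Propositions \ref{propstarcriticalindex} and \ref{propstarcriticalsimultaneouslystationary} to rule out index $\geq 2$ and the non-simultaneously-stationary case when $d(p,q)=\mathcal{S}(\partial\Omega)$. Your treatment of part $ii)$ is in fact more careful than the paper's: the paper's proof simply cites Proposition \ref{propstarcriticalsimultaneouslystationary} without spelling out the relabelling/sign-alignment step needed to pass from the symmetric inner-product hypothesis to the specific sign pattern required by Propositions \ref{propstarcritical2} and \ref{propstarcriticalsimultaneouslystationary}, whereas you correctly identify that this step is handled by the non-crossing of distinct minimising geodesics in the disc (the same mechanism used in the proof of Proposition \ref{propextremalgeodesics}).
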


\begin{proof}
	By Propositions \ref{propstarcritical1} and \ref{propstarcritical2}, every critical point $\{p,q\}\subset \partial \Omega$ of $\mathcal{D}$ with $p\neq q$ is such that $d(p,q)=\max_{t\in [0,1]}d(p_t,q_t)$ for some sweepout $\{p_t,q_t\}$ of $\partial \Omega$ with $p_{1/2}=p$ and $q_{1/2}=q$. By definition of $\mathcal{S}(\partial \Omega)$, the inequality $\mathcal{S}(\partial \Omega)\leq d(p,q)$ follows immediately.
	
	If $\{p,q\}\subset \partial \Omega$ is a critical point with $d(p,q)= \mathcal{S}(\partial \Omega)>0$, then there are two possibilities. Either $p$ and $q$ are joined by a free boundary geodesic, or there exists at least two minimising geodesics joining $p$ and $q$, none of them free boundary, that satisfy $\langle \nu_{\gamma_1}^{T}(p), \nu^T_{\gamma_2}(p)\rangle \leq 0$ and $\langle \nu_{\gamma_1}^{T}(q), \nu^T_{\gamma_2}(q)\rangle \leq 0$. (Proposition \ref{propextremalgeodesics}). In the first case, the index must be one, otherwise using Proposition \ref{propstarcriticalindex} we would be able to construct a sweepout that violates the definition of $\mathcal{S}(\partial \Omega)$. In the second case, Proposition \ref{propstarcriticalsimultaneouslystationary} guarantees that any such pair of geodesics $\gamma_1$ and $\gamma_2$ are simultaneously stationary, for the same reason. This finishes the proof.
\end{proof}

\begin{rmk}
	The attentive reader will have noticed that the arguments in this section proves Theorem \ref{thmBbis} under the slightly weaker assumption that there are no free boundary stable geodesic with length $\leq \mathcal{S}(\partial \Omega)$. We leave the details to the interested reader.
\end{rmk}

\section{Width, diameter and length}\label{sectionzoll}

	The following two general topological facts about continuous maps of the circle without fixed points will be used several times in what follows. Recall that a homeomorphism $\phi: S^1\rightarrow S^1$ is called \textit{monotone} when $\phi(x)$ turns around $S^1$ in the same direction as $x$ does.

\begin{lemm}\label{lemvarredura}
Let $\phi : S^1\rightarrow S^1$ be a continuous map without fixed points. Then, every sweepout $\{p_t,q_t\}$ of $S^1$ contains a pair $\{p_{t_0},\phi(p_{t_0})\}$.
\end{lemm}

\begin{proof} 
Let $\theta_0$ be the unique element of $[0,2\pi)$ such that $p_0=q_0=e^{i\theta_0}$. Let $\theta$, $\psi : [0,1]\rightarrow \mathbb{R}$ be the unique continuous angular functions determined so that $\theta(0)=\psi(0)=\theta_0$ and $p_t=e^{i\theta(t)}$, $q_t=e^{i\psi(t)}$ for every $t\in [0,1]$. Since $p_1=q_1$, we have $\psi(1)-\theta(1)\in 2\pi\mathbb{Z}$. 

We claim that $|\psi(1)-\theta(1)|\geq 2\pi$. In fact, otherwise $\psi(1)=\theta(1)$. Then the two maps $\theta$, $\phi: [0,1]\rightarrow \mathbb{R}$ are homotopic with fixed endpoints, by a continuous homotopy $\tilde{H} : [0,1]\times [0,1]\rightarrow \mathbb{R}$ such that $\tilde{H}(t,0)=\theta(t)$, $\tilde{H}(t,1)=\psi(t)$, $\tilde{H}(0,s)=\theta(0)=\psi(0)$ and $\tilde{H}(1,s)=\theta(1)=\psi(1)$ for all $t$ and $s$. The map $H :[0,1]\times [0,1]\rightarrow \mathcal{P}_*$ given by $H(t,s)=\{e^{iH(t,s)},e^{i\psi(t)}\}$ is a continuous homotopy between the sweepout $t\mapsto \{e^{iH(t,0)},e^{i\psi(t)}\}=\{e^{i\theta(t)},e^{i\psi(t)}\}=\{p_t,q_t\}$ and the constant map $t\mapsto \{e^{iH(t,1)},e^{i\psi(t)}\}=\{e^{i\psi(t)},e^{i\psi(t)}\}$ in $\mathcal{P}_*$, and this is a contradiction.

To finish the proof, we analyse the case $\psi(1)\geq \theta(1)+2\pi$. (The case $\psi(1)\leq \theta(1)-2\pi$ is treated similarly).  Since $\phi$ has no fixed points, $\phi(p_0)\neq p_0$ and there exists a unique $\alpha_0\in (\theta(0),\theta(0)+2\pi)$ such that $\phi(p_0)=e^{i\alpha_0}$. Let $\alpha : [0,1]\rightarrow\mathbb{R}$ be the unique angular function such that $\alpha(0)=\alpha_0$ and $\phi(p_t)=e^{i\alpha(t)}$ for all $t\in [0,1]$. 

Since $\phi$ has no fixed points, $\alpha(t)\in (\theta(t),\theta(t)+2\pi)$ for all $t\in [0,1]$. In fact, otherwise there would be, by continuity, some $t\in (0,1]$ such that either $\alpha(t)=\theta(t)$ or $\alpha(t)=\theta(t)+2\pi$. But then $\phi(p_t)=e^{i\alpha(t)}=e^{i\theta(t)}=p_t$, which contradicts the assumption that $\phi$ has no fixed points. 

	Since $\psi(0)=\theta(0)<\alpha(0)$ and $\psi(1)\geq \theta(1)+2\pi>\alpha(1)$, by continuity we find $t_0\in (0,1)$ such that $\psi(t_0)=\alpha(t_0)$. Therefore $\phi(p_{t_0}) = e^{i\alpha(t_0)}=e^{i\psi(t_0)}=q_{t_0}$. The proposition follows.
\end{proof}

\begin{lemm}\label{lemcontinuousinvolution}
	Let $\phi : S^1\rightarrow S^1$ be a continuous involution without fixed points. Then $\phi$ is a monotone homeomorphism, and $\phi$ induces a two-to-one cover $p \in S^1 \mapsto \{p,\phi(p)\} \in \mathcal{P}_*$ of a homotopically non-trivial loop in $\mathcal{P}_*$.
\end{lemm}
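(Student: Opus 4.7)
The plan is to address the two claims separately: first that $\phi$ is a monotone homeomorphism, and then that the induced loop is homotopically non-trivial.

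For the first claim, the identity $\phi \circ \phi = \mathrm{id}$ immediately makes $\phi$ its own continuous inverse, hence a homeomorphism. To establish monotonicity I would invoke the classical fact that every orientation-reversing homeomorphism of $S^1$ has a fixed point: a lift $\tilde\phi : \mathbb{R} \to \mathbb{R}$ of such a map satisfies $\tilde\phi(x+1) = \tilde\phi(x) - 1$, so the continuous function $h(x) = \tilde\phi(x) - x$ satisfies $h(x+1) = h(x) - 2$; the intermediate value theorem then produces some $x_0 \in [0,1]$ with $h(x_0) \in \mathbb{Z}$, i.e., a fixed point of $\phi$. Since the hypothesis rules out fixed points, $\phi$ is orientation-preserving, which is exactly monotone.

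For the second claim, I would first unpack the $2$-to-$1$ cover: the group $\mathbb{Z}/2 = \{\mathrm{id}, \phi\}$ acts freely on $S^1$, so the quotient $\pi : S^1 \to S^1/\phi$ is a two-sheeted covering of a closed $1$-manifold, hence of a circle. The map $\ell(p) = \{p, \phi(p)\} \in \mathcal{P}_*$ factors as $\tilde\ell \circ \pi$, where $\tilde\ell : S^1/\phi \to \mathcal{P}_*$ is injective (from $\{p,\phi(p)\} = \{q,\phi(q)\}$ one gets $p \in \{q,\phi(q)\}$, hence $[p] = [q]$), so the image of $\tilde\ell$ is a simple loop in $\mathcal{P}_*$ of which $\ell$ is the asserted $2$-to-$1$ cover.

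To show this loop is homotopically non-trivial, I would use that the connected double cover $\mathcal{O}_* \to \mathcal{P}_* \cong \mathbb{RP}^2$ has simply connected total space (its $\pi_1$ must be the trivial index-$2$ subgroup of $\pi_1(\mathbb{RP}^2) = \mathbb{Z}/2$), so a loop in $\mathcal{P}_*$ is null-homotopic if and only if it admits a continuous lift to $\mathcal{O}_*$. The natural candidate lift of $\ell$ is
\[ \hat\ell(p) = \bigl( C^+(p),\, \Gamma \setminus \mathrm{int}\, C^+(p)\bigr), \]
where $C^+(p)$ denotes the closed arc from $p$ to $\phi(p)$ traversed counterclockwise (well-defined since $\phi(p) \neq p$). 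The key observation is that as $p$ moves counterclockwise from $p_0$ to $\phi(p_0)$ --- a trip which projects to a full traversal of $\tilde\ell$ --- the monotonicity of $\phi$ forces $\phi(p)$ to sweep the complementary arc in the same direction, so at the endpoint $C^+(\phi(p_0))$ is precisely $\Gamma \setminus \mathrm{int}\, C^+(p_0)$. The two coordinates of $\hat\ell$ therefore get swapped over this half-period, so no lift of $\ell$ is $\phi$-equivariant, no lift of $\tilde\ell$ to $\mathcal{O}_*$ exists, and $\tilde\ell$ represents the non-trivial element of $\pi_1(\mathcal{P}_*)$.

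The main point to verify carefully is the continuity of the assignment $p \mapsto C^+(p)$ and the geometric claim that this arc is exchanged with its complement under the "half turn" $p_0 \mapsto \phi(p_0)$; this is a direct consequence of the orientation-preserving character of $\phi$, and can be made fully explicit by conjugating $\phi$ to the antipodal map (as an orientation-preserving involution of $S^1$ without fixed points, $\phi$ has rotation number $1/2$ and is topologically conjugate to $p \mapsto -p$).
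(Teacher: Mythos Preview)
Your proof is correct and the second half (lifting to $\mathcal{O}_*$ and observing the arc-swap over a half-period) is essentially the same argument the paper gives, only spelled out in more detail; the paper compresses this into a single sentence, noting that monotonicity forces $\phi$ to interchange the two arcs bounded by $\{x,\phi(x)\}$, so the path lifts to an \emph{open} path in the oriented double cover.

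Where you genuinely diverge is in the monotonicity argument. You invoke the classical fact that every orientation-reversing self-homeomorphism of $S^1$ has a fixed point, proved via the intermediate value theorem on a lift to $\mathbb{R}$. The paper instead argues directly by contradiction: if $\phi$ were not monotone, some arc $C$ bounded by a pair $\{p,\phi(p)\}$ would contain another pair $\{q,\phi(q)\}$ in its interior; taking such a sub-arc $C_{\overline q}$ of minimal length and tracking where $\phi$ sends it yields a contradiction. Your route is shorter and uses standard covering-space machinery; the paper's route is more self-contained and stays entirely at the level of arcs on the circle, which keeps it in the same elementary register as the rest of Section~\ref{sectionzoll}. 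Both are valid, and your remark that $\phi$ is in fact conjugate to the antipodal map is a nice sharpening neither proof strictly needs.
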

\begin{proof}
	By assumption, $\phi$ is continuous and satisfies $\phi\circ\phi=id_{S^1}$. In particular, $\phi: S^1\rightarrow S^1$ is a homeomorphism.
	
	Suppose, by contradiction, that $\phi$ is not monotone. Then there exists $\{p,\phi(p)\}$ such that one of the arcs $C$ of $S^1$ determined by it contains a some $\{q,\phi(q)\}$ in its interior.
	
	Let $\{\overline{q},\phi(\overline{q})\}$ be the subset in $C$ such that the closed arc $C_{\overline{q}}$ with $\partial C_{\overline{q}}=\{\overline{q},\phi(\overline{q})\}$ contained in $C$ has the least possible length. Since $\phi$ has no fixed points, possibly after interchanging $\overline{q}$ and $\phi(\overline{q})$ we may assume that $p$, $\overline{q}$, $\phi(\overline{q})$ and $\phi(p)$ appear in that order as we traverse $S^1$ in the counterclockwise direction. Moreover, $L(C_{\overline{q}})>0$ and $\phi(x)\notin C_{\overline{q}}$ for every $x\in C_{\overline{q}}$.
	
	If $q'\in C_{\overline{q}}$ is sufficiently close to $\overline{q}$, then $\phi(q')$ is sufficiently close to $\phi(\overline{q})$, and lies outside $C_{\overline{q}}$. Hence, in this case $\phi(q')$ must lie in the arc inside $C$ joining $\phi(\overline{q})$ to $\phi(p)$. Since $C_{\overline{q}}$ is connected, and $\phi(C_{\overline{q}})$ is therefore a connected set that contains $\phi(\overline{q})$, $\phi(q')$ and $\overline{q}=\phi(\phi(\overline{q}))$, it follows that $\phi(C_{\overline{q}})\supset\Gamma\setminus C_{\overline{q}}$. In particular, there exists $q''\in C_{\overline{q}}$ such that $\phi(q'')=\phi(p)$. Applying the involution $\phi$, we conclude that $q''=p$, a contradiction.
	
	Monotonicity implies that $\phi$ interchanges the two arcs of $S^1$ bounded by a pair $\{x,\phi(x)\}\in \Gamma$ as $x$ moves towards $\phi(x)$. Thus, this path lifts to an open continuous path in the oriented double cover of $\mathcal{P}_*$. In other words, this path is homotopically non-trivial in $\mathcal{P}_*$. The proposition follows. 
\end{proof}

\begin{prop}\label{propthmCvolta}
	Let $\Gamma$ be a smoothly embedded circle in a complete Riemannian manifold. Suppose that there exists a continuous map $p\in \Gamma \mapsto \phi(p)\in \Gamma$ such that $d(p,\phi(p))=diam(\Gamma)$ for every $p\in \Gamma$. Then $\mathcal{S}(\Gamma)=diam(\Gamma)$.
\end{prop}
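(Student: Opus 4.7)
The plan is to establish the inequality $\mathcal{S}(\Gamma) \geq diam(\Gamma)$, since the reverse inequality $\mathcal{S}(\Gamma) \leq diam(\Gamma)$ was already noted. Concretely, I will show that every sweepout $\{p_t, q_t\}_{t \in [0,1]}$ of $\Gamma$ contains a parameter $t^{*} \in (0, 1)$ at which $\{p_{t^*}, q_{t^*}\} = \{p, \phi(p)\}$ for some $p \in \Gamma$, so that $d(p_{t^*}, q_{t^*}) = diam(\Gamma)$ by hypothesis. The argument combines a simple degree computation for $\phi$ with an intermediate value theorem applied to a continuous function built from the sweepout.

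First I would fix a diffeomorphism $\alpha : \mathbb{R}/\mathbb{Z} \to \Gamma$ and regard $\phi$ as a continuous self-map of $\mathbb{R}/\mathbb{Z}$. Since $d(p, \phi(p)) > 0$, the continuous map $p \mapsto \phi(p) - p$ takes values in $(\mathbb{R}/\mathbb{Z}) \setminus \{0\}$, which is an open arc and hence contractible; thus $\phi - \mathrm{id}$ is null-homotopic as a map $S^1 \to S^1$, forcing $\deg \phi = 1$. I then choose a continuous lift $\widetilde{\phi} : \mathbb{R} \to \mathbb{R}$ with $\widetilde{\phi}(0) \in (0, 1)$. The function $\psi(x) := \widetilde{\phi}(x) - x$ is continuous, $1$-periodic, and avoids $\mathbb{Z}$, so by connectedness $\psi(\mathbb{R}) \subset (0, 1)$.

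Next, I lift the sweepout. Using the continuous lift in $\mathcal{O}_{*}$ described in Section~\ref{introwidth}, one produces continuous functions $\widetilde{a}_t, \widetilde{b}_t \in \mathbb{R}$ such that $\{\alpha(\widetilde{a}_t), \alpha(\widetilde{b}_t)\} = \{p_t, q_t\}$, $C_t = \alpha([\widetilde{a}_t, \widetilde{b}_t])$, $\widetilde{b}_t - \widetilde{a}_t \in [0, 1]$, $\widetilde{b}_0 - \widetilde{a}_0 = 0$, and $\widetilde{b}_1 - \widetilde{a}_1 = 1$. Define $F(t) := \widetilde{b}_t - \widetilde{\phi}(\widetilde{a}_t)$. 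Then $F(0) = -\psi(\widetilde{a}_0) \in (-1, 0)$ and $F(1) = 1 - \psi(\widetilde{a}_1) \in (0, 1)$, so by the intermediate value theorem there exists $t^{*} \in (0, 1)$ with $F(t^{*}) = 0$. Projecting back to $\Gamma$ gives $\alpha(\widetilde{b}_{t^*}) = \phi(\alpha(\widetilde{a}_{t^*}))$, so one of $\{p_{t^*}, q_{t^*}\}$ equals $\phi$ of the other; hence $d(p_{t^*}, q_{t^*}) = diam(\Gamma)$, as desired.

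The main obstacle I anticipate is the continuous lifting of the sweepout to the strip $\{(a, b) : 0 \leq b - a \leq 1\}$ in the preceding step: one must carefully translate the continuity of $t \mapsto C_t$ in the symmetric-difference metric, together with the degenerate boundary values $C_0 = \{p_0\}$ and $C_1 = \Gamma$, into a continuous parameterization of the endpoints of $C_t$ in $\mathbb{R}$ with the length varying from $0$ to $1$. This is the one place in the proof where the topological structure of $\mathcal{O}_{*}$ genuinely enters.
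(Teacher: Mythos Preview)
Your proposal is correct and follows the same core strategy as the paper: show that every sweepout must contain a pair of the form $\{p,\phi(p)\}$, whence $\max_t d(p_t,q_t)\geq diam(\Gamma)$.

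The execution differs. The paper observes that the map $p\mapsto\{p,\phi(p)\}$ traces a homotopically non-trivial loop in the projective plane $\mathcal{P}_*$ (citing Lemma~\ref{lemcontinuousinvolution}), and then invokes the fact that any two homotopically non-trivial curves in $\mathbb{RP}^2$ must intersect, so every sweepout meets this loop. Your argument unwinds this intersection property into an explicit intermediate-value computation in the universal cover: you show $\deg\phi=1$ directly from the fixed-point-free condition, lift both $\phi$ and the sweepout to $\mathbb{R}$, and compare the function $\widetilde b_t-\widetilde a_t$ (running from $0$ to $1$) against $\psi=\widetilde\phi-\mathrm{id}$ (trapped in $(0,1)$). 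Your version is more self-contained and, notably, does not require $\phi$ to be an involution --- a hypothesis that Lemma~\ref{lemcontinuousinvolution} assumes but which is not part of the present statement; the paper's invocation of that lemma is therefore slightly loose, and your degree argument sidesteps the issue cleanly.

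Your flagged obstacle --- the continuous lift of the sweepout to the strip $\{0\le b-a\le 1\}$ --- is genuine but routine: condition~$iii)$ in Definition~\ref{defiweepout} gives a continuous family of arcs $C_t$ in the symmetric-difference metric with $L(C_0)=0$ and $L(C_1)=L(\Gamma)$, and the two endpoints of $C_t$ vary continuously and lift to $\mathbb{R}$ with $\widetilde b_t-\widetilde a_t = L(C_t)/L(\Gamma)$.
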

\begin{proof}
	Since $d(p,\phi(p))=diam(\Gamma)>0$ for every $p\in \Gamma$, the continuous map $\phi$ has no fixed points. Therefore any sweepout $\{p_t,q_t\}$ of $\Gamma$ contains a pair $\{p_{t_0},\phi(p_{t_0})\}$, by Lemma \ref{lemvarredura}. Hence, for any sweepout of $\Gamma$, we have $\max_{t\in[0,1]}d(p_t,q_t))\geq d(p_{t_0},\phi(p_{t_0})))= diam(\Gamma)$. By definition of $\mathcal{S}(\Gamma)$, and the fact that this number is bounded from above by $diam(\Gamma)$, the proposition follows.
\end{proof}

	Moving towards the converse statement, we prove a series of propositions and lemmas. First, we prove a weak converse of Proposition \ref{propthmCvolta}.

\begin{lemm}\label{lemthmCidafracaA}
	Let $\Gamma$ be a smoothly embedded circle in a complete Riemannian surface. If $\mathcal{S}(\Gamma)=diam(\Gamma)$, then for every $p\in \Gamma$ there exists $q\in \Gamma$ such that $d(p,q)=diam(\Gamma)$. 
\end{lemm}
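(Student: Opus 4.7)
The proof is by a simple contradiction argument that directly exploits the definition of $\mathcal{S}(\Gamma)$ as an infimum over sweepouts. Suppose there exists a point $p_0\in\Gamma$ with $d(p_0,q)<\operatorname{diam}(\Gamma)$ for every $q\in\Gamma$. Since $\Gamma$ is compact and $q\mapsto d(p_0,q)$ is continuous, the maximum $\alpha:=\max_{q\in\Gamma}d(p_0,q)$ is attained, and by assumption $\alpha<\operatorname{diam}(\Gamma)$. The strategy is to produce a single explicit sweepout on which $\mathcal{D}$ never exceeds $\alpha$, thereby forcing $\mathcal{S}(\Gamma)\leq \alpha<\operatorname{diam}(\Gamma)$ and contradicting the hypothesis $\mathcal{S}(\Gamma)=\operatorname{diam}(\Gamma)$.

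The sweepout is the obvious one with base point $p_0$: parametrise $\Gamma$ by a continuous map $\sigma:[0,1]\to\Gamma$ with $\sigma(0)=\sigma(1)=p_0$ that traverses $\Gamma$ once, and set $p_t:=p_0$ (constant) and $q_t:=\sigma(t)$ for every $t\in[0,1]$. Letting $C_t:=\sigma([0,t])$, all three conditions of Definition \ref{defiweepout} are satisfied: $p_0=q_0=p_0$ and $p_1=q_1=p_0$; the maps $t\mapsto p_t$ and $t\mapsto q_t$ are continuous; and $t\mapsto C_t$ is continuous in the symmetric-difference (length) topology with $C_0=\{p_0\}$ and $C_1=\Gamma$. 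In the language of the lifted path in $\mathcal{O}_*$, this family lifts to a path from $(\{p_0\},\Gamma)$ to $(\Gamma,\{p_0\})$, which are distinct points of $\mathcal{O}_*$, so the family is indeed a genuine (homotopically non-trivial) sweepout in the sense of Section \ref{introwidth}.

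For this sweepout,
\begin{equation*}
\max_{t\in[0,1]}d(p_t,q_t)=\max_{t\in[0,1]}d(p_0,\sigma(t))=\alpha<\operatorname{diam}(\Gamma),
\end{equation*}
so by the definition of the width we obtain $\mathcal{S}(\Gamma)\leq\alpha<\operatorname{diam}(\Gamma)$, contradicting the assumption $\mathcal{S}(\Gamma)=\operatorname{diam}(\Gamma)$. Hence for every $p\in\Gamma$ there must exist $q\in\Gamma$ with $d(p,q)=\operatorname{diam}(\Gamma)$. There is no real obstacle here; the only minor subtlety worth writing out carefully is the verification that the constructed family qualifies as a sweepout, in particular that the lift to $\mathcal{O}_*$ ends at a point different from where it starts, so that the min-max class is indeed non-trivial.
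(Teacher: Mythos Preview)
Your proof is correct and uses exactly the same sweepout as the paper: fix $p_t\equiv p$ and let $q_t$ traverse $\Gamma$ once. The only cosmetic difference is that the paper argues directly rather than by contradiction, observing that $\operatorname{diam}(\Gamma)=\mathcal{S}(\Gamma)\leq \max_t d(p,q_t)\leq \operatorname{diam}(\Gamma)$, so the maximum is attained with value $\operatorname{diam}(\Gamma)$.
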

\begin{proof}
	Pick a point $p\in \Gamma$, and let $t\in [0,1]\mapsto p_t,q_t\in \Gamma$ be continuous maps such that $p_t=p$ for every $t\in [0,1]$, and $q_t$ traverses $\Gamma$ in the counter-clockwise direction while moving from $q_0=p$ to $q_1=p$. Then $\{p_t,q_t\}_{t\in [0,1]}$ is a sweepout of $\Gamma$. By continuity, the maximum distance between points $\{p_t,q_t\}$ is attained at some $t_0\in[0,1]$. Under the assumption that $\mathcal{S}(\Gamma)=diam(\Gamma)$, we have therefore $diam(\Gamma)=\mathcal{S}(\Gamma)\leq d(p_{t_0,}q_{t_0})\leq diam(\Gamma)$. Hence, $\{p=p_{t_0},q_{t_0}\}\subset \Gamma$ is a critical point of $\mathcal{D}$ with $d(p,q_{t_0})=diam(\Gamma)$.
\end{proof}

	Using Proposition \ref{propuniqu}, the previous lemma can be improved.

\begin{prop}\label{propthmCidafracaB}
	Let $\Gamma$ be a smoothly embedded circle in a complete Riemannian manifold that is the boundary of smoothly embedded totally convex disc. If $\mathcal{S}(\Gamma)=diam(\Gamma)$, then, for every point $p\in \Gamma$, there exists a unique $\phi(p)\in \Gamma$ such that $d(p,\phi(p))=diam(\Gamma)$. Moreover, the map $p\in \Gamma\mapsto \phi(p)\in \Gamma$ is a monotone homeomorphism.
\end{prop}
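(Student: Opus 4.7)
The plan is to combine the existence result Lemma \ref{lemthmCidafracaA} with the uniqueness result Proposition \ref{propuniqu} to show $\phi$ is well-defined, then verify continuity directly and deduce monotonicity from Lemma \ref{lemcontinuousinvolution}.

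\textbf{Existence and uniqueness.} First, Lemma \ref{lemthmCidafracaA} already guarantees that for each $p\in \Gamma$ there is \emph{some} point $q\in \Gamma$ with $d(p,q)=\mathrm{diam}(\Gamma)$. To upgrade this to uniqueness, suppose $q_1,q_2\in\Gamma$ both satisfy $d(p,q_i)=\mathrm{diam}(\Gamma)$. Since $\mathrm{diam}(\Gamma)>0$, both $q_i$ differ from $p$, and since $\mathcal{D}$ attains its absolute maximum at $\{p,q_i\}$, Corollary \ref{cormaximumiscritical} tells us that both $\{p,q_1\}$ and $\{p,q_2\}$ are non-trivial critical points of $\mathcal{D}$ on $\partial \Omega$. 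Proposition \ref{propuniqu}, which crucially uses the totally convex disc hypothesis, then forces $q_1=q_2$. Thus we may define $\phi(p)$ as the unique such point.

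\textbf{Involutivity.} By symmetry of the distance, $d(\phi(p),p)=\mathrm{diam}(\Gamma)$, so by uniqueness applied at the base point $\phi(p)$ we get $\phi(\phi(p))=p$. In particular $\phi$ has no fixed points, since $\mathrm{diam}(\Gamma)>0$.

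\textbf{Continuity.} Suppose $p_n\to p$ in $\Gamma$. Let $q^\ast$ be any subsequential limit of $\phi(p_n)$, which exists by compactness of $\Gamma$. Passing to the limit in $d(p_n,\phi(p_n))=\mathrm{diam}(\Gamma)$ using continuity of the Riemannian distance gives $d(p,q^\ast)=\mathrm{diam}(\Gamma)$, so by uniqueness $q^\ast=\phi(p)$. Hence every subsequential limit equals $\phi(p)$, and $\phi(p_n)\to \phi(p)$.

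\textbf{Monotonicity.} Since $\phi:\Gamma\to\Gamma$ is a continuous involution without fixed points, Lemma \ref{lemcontinuousinvolution} immediately yields that $\phi$ is a monotone homeomorphism of $\Gamma$.

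The step I expect to be the delicate one is uniqueness: without the totally convex disc hypothesis there is no obstruction to a single point $p$ being paired with two distinct far points (e.g.\ on curves with flat pieces at critical distance). The hypothesis enters exactly through Proposition \ref{propuniqu}, whose proof relied on the Jordan-type planar configuration argument using extremal minimising geodesics. Everything else is formal: existence is inherited from the sweepout argument of Lemma \ref{lemthmCidafracaA}, and once uniqueness is in hand, continuity and monotonicity follow from general soft arguments.
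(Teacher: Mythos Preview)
Your proof is correct and follows essentially the same route as the paper: existence via Lemma \ref{lemthmCidafracaA}, uniqueness via Proposition \ref{propuniqu} applied to the critical points at the diameter level, involutivity from symmetry of $d$, continuity by the subsequential-limit argument, and monotonicity from Lemma \ref{lemcontinuousinvolution}. The only cosmetic difference is that the paper first introduces the extreme points $d_{+}(p)$ and $d_{-}(p)$ of the diameter set before invoking Proposition \ref{propuniqu}, whereas you apply it directly to an arbitrary pair $q_1,q_2$; your version is slightly more streamlined but logically identical.
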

\begin{proof}
	By Lemma \ref{lemthmCidafracaA}, for any given $p \in \Gamma$, the set of points at distance $diam(\Gamma)$ from $p$ is non-empty. It is also clearly compact. Thus, taking into account the orientation of $\Gamma$, there are uniquely defined points $d_{-}(p)$ and $d_{+}(p)$ in $\Gamma$ at distance $diam(\Gamma)$ from $p$ such that $d_{+}(p)$ is closest to $p$ as we turn around $\Gamma$ in the counter-clockwise direction, and $d_{-}(p)$ is furthest from $p$ as we turn around $\Gamma$ in the counter-clockwise direction.
	
	Since the pairs $\{p,d_{+}(p)\}$ and $\{p,d_{-}(p)\}$ are both critical points of $\mathcal{D}$ and $p\neq d_{-}(p)$ and $p\neq d_{+}(p)$, the equality $d_{+}(p)=d_{-}(p)$ is an immediate consequence of Proposition \ref{propuniqu}. Thus, a map $\phi:\Gamma \rightarrow \Gamma$ with $d(p,\phi(p))=diam(\Gamma)$ for all $p\in \Gamma$ is well-defined. Notice also that this map satisfies $\phi(\phi(p))=p$ for all $p\in \Gamma$, for the same reason.
	
	To check the continuity of $\phi$, let $\{p_i\}\subset \Gamma$ be a sequence converging to a point $p\in \Gamma$. Every subsequence of the sequence $\{\phi(p_i)\}\subset \Gamma$ has a subsequence converging to some $q\in \Gamma$. Since $d(p,q)=\lim d(p_i,\phi(p_i))=diam(\Gamma)$, we have $q=\phi(p)$, by uniqueness of the point at distance $diam(\Gamma)$ from $p$ established earlier in this proof. Hence, $\{\phi(p_i)\}$ converges to $\phi(p)$. Since $p$ is arbitrary, $\phi$ is continuous at every point of $\Gamma$.
	
	The last assertion of the Proposition follows now immediately from Lemma \ref{lemcontinuousinvolution}.
\end{proof}

	Combining the previous results, we prove Theorem C.

\begin{thm}\label{thmCbis}
	Let $\Gamma$ be a smoothly embedded circle in a complete Riemannian surface.
	\begin{itemize}
		\item[$(a)$] If there exists a continuous map $\phi : \Gamma \rightarrow \Gamma$ such $d(x,\phi(x))=diam(\Gamma)$, then $\mathcal{S}(\Gamma)=diam(\Gamma)$.
		\item[$(b)$] If $\mathcal{S}(\Gamma)=diam(\Gamma)$, then for every $x\in \Gamma$ there exists $y\in \Gamma$ such that $d(x,y)=diam(\Gamma)$.
	\end{itemize}
	Assume, moreover, that $\Gamma$ is the boundary of a smoothly embedded totally convex Riemannian disc. Then $\mathcal{S}(\Gamma)=diam(\Gamma)$ if and only if there exists a continuous map $\phi : \Gamma \rightarrow \Gamma$ map such $d(x,\phi(x))=diam(\Gamma)$.
\end{thm}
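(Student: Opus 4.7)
The proof plan is to piece together the three preliminary results established just before the theorem statement, each of which handles one of the three implications.

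First, for part $(a)$, I would simply invoke Proposition \ref{propthmCvolta}. The underlying mechanism there is that the map $p\in\Gamma\mapsto\{p,\phi(p)\}\in\mathcal{P}_*$, restricted to a half-arc bounded by some $x$ and $\phi(x)$, is a homotopically non-trivial loop in $\mathcal{P}_*$ (Lemma \ref{lemcontinuousinvolution}). Since every sweepout is also a homotopically non-trivial loop in $\mathcal{P}_*$, the two loops must intersect on the projective plane, so any sweepout $\{p_t,q_t\}$ necessarily contains a pair of the form $\{p_{t_0},\phi(p_{t_0})\}$, on which $\mathcal{D}$ takes the value $\operatorname{diam}(\Gamma)$. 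Combined with the trivial inequality $\mathcal{S}(\Gamma)\leq\operatorname{diam}(\Gamma)$, this gives equality.

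For part $(b)$, I would apply Lemma \ref{lemthmCidafracaA}. Given any $p\in\Gamma$, one builds a specific sweepout by holding $p_t\equiv p$ and letting $q_t$ run once around $\Gamma$ in the counter-clockwise direction. The assumption $\mathcal{S}(\Gamma)=\operatorname{diam}(\Gamma)$ forces this particular sweepout to attain $\operatorname{diam}(\Gamma)$ at some $t_0$, producing the desired antipodal partner $q=q_{t_0}$ with $d(p,q)=\operatorname{diam}(\Gamma)$.

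For the final equivalence, one direction is already $(a)$. The nontrivial direction, when $\Gamma=\partial\Omega$ for a totally convex smoothly embedded disc, is handled by Proposition \ref{propthmCidafracaB}: part $(b)$ provides at least one point at distance $\operatorname{diam}(\Gamma)$ from each $p$, and the uniqueness of such a point is forced by Proposition \ref{propuniqu} (any two such candidates $q,q'$ would give two distinct non-trivial critical points $\{p,q\}$ and $\{p,q'\}$ of $\mathcal{D}$ sharing the vertex $p$, which is forbidden). A short subsequence argument using compactness of $\Gamma$ and uniqueness yields continuity of the resulting map $\phi$. The main subtlety here, and really the only step that uses the totally convex disc hypothesis in an essential way, is this uniqueness, which is exactly the point where Proposition \ref{propuniqu} is unavoidable; without it one could only guarantee the existence of a multivalued correspondence rather than a genuine continuous map $\phi$.

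In short, the proof is a three-line assembly: $(a)$ is Proposition \ref{propthmCvolta}, $(b)$ is Lemma \ref{lemthmCidafracaA}, and the equivalence under the totally convex disc assumption follows by combining $(a)$ with Proposition \ref{propthmCidafracaB}. No new technical work is needed beyond citing these results.
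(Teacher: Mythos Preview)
Your proposal is correct and matches the paper's own proof essentially line for line: the paper simply cites Proposition~\ref{propthmCvolta} for $(a)$, Lemma~\ref{lemthmCidafracaA} for $(b)$, and Proposition~\ref{propthmCidafracaB} for the final equivalence. Your added explanations of the mechanisms behind each cited result are accurate and reflect exactly how those preliminary results were proven.
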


\begin{proof}
	Assertion $a)$ was proven in Proposition \ref{propthmCvolta}. Assertion $b)$ follows from Proposition \ref{lemthmCidafracaA}. The final equivalence follows from Proposition \ref{propthmCidafracaB}.
\end{proof}

	The next elementary lemma characterises the case where the extrinsic and intrinsic diameters of $\Gamma$ are the same.

\begin{lemm}\label{lemdiamlength}
	Let $\Gamma$ be a smoothly embedded circle in a complete Riemannian manifold. The following assertions are equivalent:
	\begin{itemize}
		\item[$i)$] $diam(\Gamma)=L(\Gamma)/2$.
		\item[$ii)$] $\Gamma$ is a geodesic formed by two minimising geodesics of length $L(\Gamma)/2$.
	\end{itemize}
\end{lemm}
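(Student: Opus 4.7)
The plan is to prove both directions by exploiting the simple observation that for any two points $x,y$ on a smoothly embedded circle $\Gamma$, the length of either arc of $\Gamma$ joining $x$ and $y$ is at least $d(x,y)$, and in particular $\mathrm{diam}(\Gamma)\leq L(\Gamma)/2$ always holds.

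For the direction $(ii)\Rightarrow (i)$, suppose $\Gamma$ decomposes as two minimising geodesics $\alpha_1,\alpha_2$ of length $L(\Gamma)/2$ sharing common endpoints $p,q$. Then $d(p,q)=L(\alpha_1)=L(\Gamma)/2$, so $\mathrm{diam}(\Gamma)\geq L(\Gamma)/2$; combined with the universal upper bound, equality follows.

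For the direction $(i)\Rightarrow (ii)$, choose $p,q\in\Gamma$ with $d(p,q)=\mathrm{diam}(\Gamma)=L(\Gamma)/2$. The two arcs $A_1,A_2$ of $\Gamma$ determined by $\{p,q\}$ have lengths summing to $L(\Gamma)$, and each is a curve joining $p$ and $q$, hence each has length at least $d(p,q)=L(\Gamma)/2$. Therefore $L(A_1)=L(A_2)=L(\Gamma)/2=d(p,q)$. Since each arc is a smooth curve realising the Riemannian distance between its endpoints, once reparametrised by arc length each arc satisfies the geodesic equation on its interior and is a minimising geodesic.

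The only remaining point is the smooth matching at $p$ and $q$: parametrise $\Gamma$ by arc length as a smooth map $\gamma:\mathbb{R}/L(\Gamma)\mathbb{Z}\to M$, with $\gamma(0)=\gamma(L(\Gamma))=p$ and $\gamma(L(\Gamma)/2)=q$. The restrictions to $(0,L(\Gamma)/2)$ and $(L(\Gamma)/2,L(\Gamma))$ satisfy $\nabla_{\gamma'}\gamma'=0$ by the previous paragraph, and since $\gamma$ is $C^\infty$ on the whole circle, continuity of $\nabla_{\gamma'}\gamma'$ forces the geodesic equation to hold at $t=0$ and $t=L(\Gamma)/2$ as well. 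Thus $\Gamma$ is a smooth closed geodesic formed by two minimising geodesics of length $L(\Gamma)/2$. No essential obstacle is expected; the one point deserving care is simply to record that a smooth length-minimising curve is automatically a geodesic after arc-length parametrisation, so that the smoothness of the embedding propagates the geodesic equation across $p$ and $q$.
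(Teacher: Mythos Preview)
Your proof is correct and follows essentially the same route as the paper: both directions hinge on the elementary inequality $d(x,y)\leq L(\text{shorter arc})\leq L(\Gamma)/2$, and in the direction $(i)\Rightarrow(ii)$ you force both arcs to have length exactly $L(\Gamma)/2$ and hence to be minimising. Your extra paragraph verifying that the geodesic equation extends smoothly across the junction points $p$ and $q$ is a detail the paper leaves implicit (since the arcs are already pieces of the given smooth embedding $\Gamma$), but it does no harm.
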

\begin{proof}
	Assume $i)$. Let $p$, $q\in \Gamma$ be such that $d(p,q)=diam(\Gamma)$. Notice that $\Gamma$ is the union of two arcs $\gamma_1$, $\gamma_2$ with extremities $p$ and $q$, which we label so that $L(\gamma_1)\leq L(\gamma_2)$. Since $diam(\Gamma)=d(p,q)\leq L(\gamma_1)\leq L(\Gamma)/2$, it follows from $i)$ that $\gamma_1$ must be a minimising geodesic joining $p$ and $q$ of length $L(\Gamma)/2$. Then, $\gamma_2$ also has length $L(\Gamma)/2=L(\Gamma)-L(\gamma_1)$, and the same argument shows that $\gamma_2$ is also a minimising geodesic joining the same pair of points. This proves $ii)$.
	
	Assume $ii)$. Since the two geodesics forming $\Gamma$ are minimising and have length $L(\Gamma)/2$, their extremities $p$ and $q$ satisfy $d(p,q)=L(\Gamma)/2$. Since $L(\Gamma)/2$ is an upper bound for the distance of any pair of points of $\Gamma$, $diam(\Gamma)=d(p,q)=L(\Gamma)/2$, which is $i)$.
\end{proof}

	Finally, we analyse the equality $\mathcal{S}(\Gamma)=L(\Gamma)/2$. (Theorem D).

\begin{thm}\label{thmDbis}
	Let $\Gamma$ be a smoothly embedded circle in a complete Riemannian manifold. The following assertions are equivalent:
	\begin{itemize}
		\item[$i)$] $\mathcal{S}(\Gamma)=L(\Gamma)/2$.
		\item[$ii)$] For every $x$, $y\in \Gamma$, the distance between $x$ and $y$ equals the length of the shortest arc of $\Gamma$ bounded by these two points.
	\end{itemize}
\end{thm}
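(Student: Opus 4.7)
The plan is to prove the two implications separately, leveraging the tools already assembled, namely Theorem C, Lemma \ref{lemdiamlength}, and the remark made right after Definition \ref{defwidth} that every sweepout contains a pair of points dividing $\Gamma$ into two arcs of equal length $L(\Gamma)/2$.

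For the direction $ii) \Rightarrow i)$, I would argue as follows. The general chain of inequalities $\mathcal{S}(\Gamma) \leq \operatorname{diam}(\Gamma) \leq L(\Gamma)/2$ is stated at the beginning of Section 1.7, so it suffices to show $\mathcal{S}(\Gamma) \geq L(\Gamma)/2$. Given any sweepout $\{p_t,q_t\}$ of $\Gamma$, the continuity of the map $t \mapsto L(C_t) - L(\Gamma \setminus \mathrm{int}(C_t))$ combined with conditions $i)$ and $iii)$ of Definition \ref{defiweepout} guarantees that for some $t_0 \in [0,1]$ the arcs bounded by $p_{t_0}$ and $q_{t_0}$ both have length $L(\Gamma)/2$. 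Hypothesis $ii)$ then yields $d(p_{t_0},q_{t_0}) = L(\Gamma)/2$, so $\max_{t \in [0,1]} d(p_t, q_t) \geq L(\Gamma)/2$, and the infimum over sweepouts gives $\mathcal{S}(\Gamma) \geq L(\Gamma)/2$.

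The converse $i) \Rightarrow ii)$ is the substantive part, and I expect it to be the main step. From $\mathcal{S}(\Gamma) = L(\Gamma)/2$ together with the general inequalities, we obtain simultaneously $\mathcal{S}(\Gamma) = \operatorname{diam}(\Gamma) = L(\Gamma)/2$. Applying Theorem C, part $b)$, every point $x \in \Gamma$ has a companion $y \in \Gamma$ with $d(x,y) = \operatorname{diam}(\Gamma) = L(\Gamma)/2$. The same argument used in the proof of Lemma \ref{lemdiamlength} (any arc joining $x$ and $y$ has length at least $d(x,y) = L(\Gamma)/2$, while the two arcs bounded by $x,y$ have total length $L(\Gamma)$) forces the two arcs bounded by $x,y$ to have equal length $L(\Gamma)/2$ and to each be minimising geodesics of length $L(\Gamma)/2$. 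Hence, parametrising $\Gamma$ by arc length as $\gamma: \mathbb{R}/L\mathbb{Z} \to M$, for every $s \in \mathbb{R}/L\mathbb{Z}$ the curve $\gamma|_{[s,s+L/2]}$ is a minimising geodesic.

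To conclude $ii)$, take arbitrary $x = \gamma(a)$ and $y = \gamma(b)$ in $\Gamma$ with $0 < b - a \leq L(\Gamma)/2$; I would then observe that $\gamma|_{[a,b]}$ is a subsegment of the minimising geodesic $\gamma|_{[a, a + L(\Gamma)/2]}$ established above, and therefore itself minimising (a shortcut between $\gamma(a)$ and $\gamma(b)$ would produce, by concatenation with $\gamma|_{[b, a+L/2]}$, a path from $\gamma(a)$ to $\gamma(a+L/2)$ strictly shorter than $L(\Gamma)/2$). Consequently $d(x,y) = b - a$, which is the length of the shortest arc of $\Gamma$ bounded by $x$ and $y$. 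The only mildly delicate point in the whole argument is the upgrade from Theorem C, which yields a companion realising the diameter for each point individually, to the structural conclusion that all antipodal-in-arc-length pairs realise the diameter; but this upgrade is immediate from the length accounting of Lemma \ref{lemdiamlength} applied pointwise.
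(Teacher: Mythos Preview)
Your proposal is correct and follows essentially the same route as the paper's own proof: for $ii)\Rightarrow i)$ both arguments use that every sweepout contains a pair bisecting $\Gamma$ by arc length, and for $i)\Rightarrow ii)$ both invoke Lemma~\ref{lemthmCidafracaA} (equivalently Theorem~C, part $b)$) to get a diametrical companion $\phi(x)$ for each $x$, then use the length accounting to see that both arcs bounded by $x$ and $\phi(x)$ are minimising geodesics of length $L(\Gamma)/2$, whence every subarc is minimising.
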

\begin{proof}
	We begin by proving that $ii)$ implies $i)$. Every sweepout $\{p_t,q_t\}$ of $\Gamma$ satisfies 
	\begin{equation*}
		\max_{t\in [0,1]} d_{(\Gamma,g_{|_{\Gamma}})}(p_t,q_t) = L(\Gamma)/2, 
 	\end{equation*}
 	and the maximum is attained. The equality $d(x,y)=d_{(\Gamma,g_{|_{\Gamma}})}(x,y)$ for all $\{x,y\}\subset \Gamma$ thus implies, by the definition of $\mathcal{S}(\Gamma)$, that $\mathcal{S}(\Gamma)=L(\Gamma)/2$.
 	
 	Conversely, suppose $i)$. In particular, $diam(\Gamma)=\mathcal{S}(\Gamma)$. By Lemma \ref{lemthmCidafracaA}, given any $x\in \Gamma$ there exists $\phi(x)\in \Gamma$ such that $d(x,\phi(x)) = diam(\Gamma)= \mathcal{S}(\Gamma)=L(\Gamma)/2$. This point $\phi(x)$ is the unique point of $\Gamma$ with this property, because the two arcs determined by $x$ and $\phi(x)$ must then have the same length. Also, for any pair of points $x$, $y\in \Gamma$, the shortest arc of $\Gamma$ joining $x$ and $y$ is minimising as well, since it is part of one of the arcs bounded by $x$ and $\phi(x)$, which are both minimising because $d(x,\phi(x)) = L(\Gamma)/2$. Thus, $d(x,y)=d_{(\Gamma,g_{|_{\Gamma}})}(x,y)$, as we wanted to prove.  
\end{proof}   

\begin{rmk}\label{rmkjacobi} As a consequence of Theorem \ref{thmDbis}, a smoothly embedded circle whose width equals half its length is a closed embedded geodesic that has the following property as well: any of its periodic Jacobi fields must either never vanish or vanish exactly at a pair $\{x,y\}\subset \Gamma$ bounding two arcs of $\Gamma$ of the same length. In fact, if not there would be two conjugate points strictly between some minimising geodesic arc between two points $\{p,q\}$ such that $d(p,q)=L(\Gamma)/2$, a contradiction. Therefore the Morse index of this closed embedded geodesic is either zero or one. Both cases are actually possible, see Examples \ref{examhemisphere} and \ref{examcylhem} in Section 9.
\end{rmk}

	Collecting several the results proven so far, we prove Theorem E. 
\begin{proof}[Proof of Theorem E]
	Let $\Gamma$ be a smoothly embedded circle in a complete Riemannian manifold. According to Corollary \ref{cormaximumiscritical} and Theorem A (Theorem \ref{thmAbis}), if $0<\mathcal{S}(\Gamma)<diam(\Gamma)$, then there are at least two critical points of $\mathcal{D}$ in $\mathcal{P}$, one attaining the width and the other attaining the diameter of $\Gamma$.

	If, however, $\mathcal{S}(\Gamma)=diam(\Gamma)$, then Proposition \ref{propthmCidafracaB} implies that every $x\in \Gamma$ belongs to a pair $\{x,y\}\subset \Gamma$ such that $d(x,y)=diam(\Gamma)$. In particular, there are infinitely many non-trivial critical points of $\mathcal{D}$ in $\Gamma$.

	If, moreover, $\Gamma$ is the boundary of a smoothly embedded totally convex disc, more can be said. Proposition \ref{propthmCidafracaB} guarantees that there exists a monotone homeomorphism $\phi: \Gamma\rightarrow \Gamma$ such that $\{x,\phi(x)\}$ is a critical point of $\mathcal{D}$ with $d(x,\phi(x))=diam(\Gamma)$ for every $x\in \Gamma$. By the uniqueness property of critical points of $\mathcal{D}$ on the boundary of totally convex discs (Proposition \ref{propuniqu}), there are no other critical points of $\mathcal{D}$ besides the pairs $\{x,\phi(x)\}$, $x\in \Gamma$. Viewed in $\mathcal{P}_*$, the set of pairs $\{x,\phi(x)\}$ is an embedded circle that represents a homotopically non-trivial loop in $\mathcal{P}_*$, see Lemma \ref{lemcontinuousinvolution}. Theorem E is therefore proven.
\end{proof}

	To finish this section, we explain a construction of independent interest, which shows that no Riemannian disc with strictly convex boundary can not be a local maximum of $\mathcal{S}(\partial M)/L(\partial M)$, among Riemannian discs with strictly convex boundary. 

\begin{prop}\label{propnocriticalSL}
	Let $(M^2,g)$ be a compact Riemannian surface with connected, strictly convex boundary. Then, there exists a smooth one parameter family of conformal Riemannian metrics $g_s=\exp(f_s)g$, $s\in[0,\epsilon)$, and a positive constant $A>0$ such that $g_0=g$ and
	\begin{equation*}
		\frac{\mathcal{S}(\partial M,g_s)}{L(\partial M,g_s)} \geq \frac{\mathcal{S}(M,g)}{L(\Gamma,g)}+As+o(s) \quad \text{as $s$ goes to zero}.
	\end{equation*}
\end{prop}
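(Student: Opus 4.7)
The plan is to construct $\{g_s\}$ by a conformal factor $e^{-s\psi}$ concentrated in a thin $g$-collar of $\partial M$, so that the boundary length shrinks by a definite first-order amount while, thanks to strict convexity of $\partial M$, the width $\mathcal{S}(\partial M,\cdot)$ shrinks by at most an amount proportional to the collar width. Choosing the collar thin enough then strictly improves the ratio $\mathcal{S}/L$ at first order. Concretely, fix $\delta>0$ (to be chosen later) and let $\psi:M\to[0,1]$ be a smooth function with $\psi\equiv 1$ on $\partial M$ and $\mathrm{supp}(\psi)\subset N_\delta:=\{x:d_g(x,\partial M)<\delta\}$. Set $f_s=-s\psi$ and $g_s=e^{f_s}g$. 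Since $g_s\equiv g$ off $N_\delta$, the boundary $\partial M$ remains smooth and strictly convex in $(M,g_s)$ for all small $s$, and because $\psi\equiv 1$ on $\partial M$,
\begin{equation*}
L(\partial M,g_s)=e^{-s/2}\,L(\partial M,g)=L(\partial M,g)\left(1-\tfrac{s}{2}\right)+O(s^2).
\end{equation*}

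To bound $\mathcal{S}(\partial M,g_s)$ from below I would use the elementary inequality $e^{-s\psi/2}\geq 1-s\psi/2$. For any $p,q\in\partial M$ and any $g_s$-minimising geodesic $\gamma^s$ joining them,
\begin{equation*}
d_{g_s}(p,q)=L_{g_s}(\gamma^s)\geq L_g(\gamma^s)-\frac{s}{2}\int_{\gamma^s}\psi\,ds_g\geq d_g(p,q)-\frac{s}{2}\,\mathrm{length}_g(\gamma^s\cap N_\delta),
\end{equation*}
using $\psi\leq 1$ supported in $N_\delta$. The key technical lemma needed — and the main obstacle in the proof — is the following: there exist constants $C,\delta_0,s_0>0$ such that for every $\delta\in(0,\delta_0)$ and $s\in[0,s_0)$, every $g_s$-minimising geodesic $\gamma^s$ with endpoints on $\partial M$ and $g$-length in $[\mathcal{S}(\partial M,g)/2,\,L(\partial M,g)]$ satisfies $\mathrm{length}_g(\gamma^s\cap N_\delta)\leq C\delta$. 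Assuming this lemma, for any sweepout $\{p_t,q_t\}$ I pick $\tau$ realising $\max_t d_g(p_t,q_t)\geq\mathcal{S}(\partial M,g)$ and apply the lemma to a $g_s$-minimiser joining $p_\tau$ and $q_\tau$ to obtain
\begin{equation*}
\max_t d_{g_s}(p_t,q_t)\geq d_{g_s}(p_\tau,q_\tau)\geq \mathcal{S}(\partial M,g)-\frac{sC\delta}{2},
\end{equation*}
and taking the infimum over sweepouts gives $\mathcal{S}(\partial M,g_s)\geq \mathcal{S}(\partial M,g)-sC\delta/2$.

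Combining the two estimates,
\begin{equation*}
\frac{\mathcal{S}(\partial M,g_s)}{L(\partial M,g_s)}\geq \frac{\mathcal{S}(\partial M,g)-sC\delta/2}{L(\partial M,g)}\,e^{s/2}=\frac{\mathcal{S}(\partial M,g)}{L(\partial M,g)}+s\cdot\frac{\mathcal{S}(\partial M,g)-C\delta}{2L(\partial M,g)}+o(s).
\end{equation*}
Since $\mathcal{S}(\partial M,g)>0$ by Theorem A, one may fix $\delta<\mathcal{S}(\partial M,g)/C$, making the constant $A:=(\mathcal{S}(\partial M,g)-C\delta)/(2L(\partial M,g))$ strictly positive, as required. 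The technical lemma itself should follow by working in Fermi coordinates $(r,\theta)$ near $\partial M$, where strict convexity yields $\partial_r f(0,\theta)<0$: a $g_s$-minimising geodesic of $g$-length bounded away from zero meets $\partial M$ at an angle bounded below (uniformly in small $s$, by smooth compactness of the family of minimisers between pairs $(p,q)$ with $d_g(p,q)$ bounded below), so each visit to $N_\delta$ has $g$-length $O(\delta)$; the uniform upper length bound and local convexity then bound the total number of visits.
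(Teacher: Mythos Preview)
Your approach is correct but genuinely dual to the paper's. The paper keeps the metric equal to $g$ on a collar $V_{\delta/3}$ and scales it up by $(1+s)^2$ outside $V_{2\delta/3}$; thus $L(\partial M,g_s)=L(\partial M,g)$ exactly, and the task becomes showing that $\mathcal{S}(\partial M,g_s)$ grows at least linearly. You do the opposite: shrink the metric in a collar so that $L(\partial M,g_s)=e^{-s/2}L(\partial M,g)$ exactly, and show $\mathcal{S}(\partial M,g_s)$ drops by at most $O(s\delta)$. Both routes reduce to the same geometric fact, that strictly convex boundary forces long minimising geodesics to spend most of their length outside a thin collar.

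The paper's version of your key lemma (its Claims~1--4) is slightly easier to justify, for one reason: since the paper's $g_s$ agrees with $g$ on the collar, the $g_s$-minimising geodesics there satisfy the $g$-geodesic equation, and the barrier argument (level sets $\{d_g=\delta'\}$ are strictly $g$-convex, so by the maximum principle the core $\{d_g\geq\delta\}$ is totally convex and the geodesic visits the collar only at its two ends) is purely about $g$. In your setup the conformal factor varies across the collar, so you must check that the level sets remain strictly $g_s$-convex; this holds if you take $\psi=\chi(d_g(\cdot,\partial M)/\delta)$ with $\chi$ decreasing, since then the normal derivative of the conformal exponent has a sign and $\kappa_{g_s}\geq e^{s\psi/2}\kappa_g>0$. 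With that choice, your ``number of visits'' is exactly two by the same barrier argument, and the $O(\delta)$ length of each visit follows from the uniform lower angle bound you invoke. One caveat: because $|\nabla\psi|\sim 1/\delta$, your $s_0$ will depend on $\delta$; this is harmless since you fix $\delta$ before letting $s\to 0$, but your statement of the lemma should reflect it.
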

\begin{proof}
	Denote by $V_\delta$ the collar neighbourhood of radius $\delta>0$ of $\partial M$ in $(M^2,g)$, where $\delta$ is a sufficiently small constant that will be specified later. Fix some non-negative number $s$, and let $\phi_s\in C^{\infty}(M)$, $s\geq 0$, be a smooth one-parameter family  with $\phi_0\equiv 1$ such that, for all $s\geq 0$,
\begin{itemize}
	\item[$i)$] $\phi_s(x)=1$ if $d(x,\partial M)\leq \delta/3$.
	\item[$ii)$] $\phi_s(x)=(1+s)^2$ if $d(x,\partial M) \geq 2\delta/3$
	\item[$iii)$] $\phi_s(x)\in [1,(1+s)^2]$ for all $x\in M$
\end{itemize}
	The metrics we are looking for will be of the form $g_s=\phi_s g$. By virtue of $i)$, all metrics $g_s$ coincide in $V_\delta$. In particular, $\partial M$ is strictly convex in $(M^2,g_s)$ and $L(\partial M,g_s)=L(\partial M,g)$ for every $s\geq 0$. Thus, it will be enough to show that $\mathcal{S}(\partial M,g_s)$ increases at least linearly near $s=0$.

	We begin with some simple estimates relating distances measured according to $g_s$ and $g_0=g$. Let $c:[0,1]\rightarrow M$ be any curve joining points $x$, $y\in M$. By property $iii)$, the length of $c$ with respect to the conformal metrics $g_s$ and $g$ can be compared as follows:
\begin{equation*}
	L(c,g) \leq L(c,g_s) \leq (1+s) L(c,g).
\end{equation*}  
Since $c$ is arbitrary,
\begin{equation*}
	d_{g}(x,y) \leq d_{g_s}(x,y) \leq (1+s) d_{g}(x,y).
\end{equation*}  
 In particular, if $\gamma$ is a minimising geodesic of $(M^2,g)$ joining points $x$, $y\in M$, 
\begin{equation} \label{eqauxSL1}
	d_{g_s}(x,y)\geq d_g(x,y)=L(\gamma,g)\geq \frac{1}{1+s}L(\gamma,g_s).
\end{equation}
	
	The first claim will allow us to specify a convenient $\delta>0$. \\
	
\noindent \textbf{Claim 1}: for every $\varepsilon>0$, there exists $\delta>0$ with the following property: For every minimising geodesic $\gamma:[0,a]\rightarrow M$ of $(M^2,g)$ such that $\gamma(0)\in \partial M$ and $L(\gamma,g)\geq \varepsilon$, there exists $t\in (0,a]$ such that $d(\gamma(t),\partial M)\geq \delta$.  \\

	In order to prove the Claim, we argue by contradiction: if not, there exists some $\varepsilon>0$ and a sequence of minimising geodesics $\gamma_i:[0,a_i]\rightarrow M$ of $(M^2,g)$, with $\gamma_i(0)\in \partial M$ and $L(\gamma_i)\geq \varepsilon$, such that they lie entirely in the collar neighbourhood $V_{1/i}$. Passing to the limit, these geodesics converge to an arc of $\partial M$ of length $\geq \varepsilon>0$ that is geodesic as a limit of geodesics. But this contradicts the assumption that $\partial M$ is strictly convex in $(M^2,g)$. \\
	
	In view of Claim 1, we define $\varepsilon=\mathcal{S}(\partial M,g)/10$ and choose the corresponding $\delta=\delta(\varepsilon)>0$ once and for all.
	
	Next, it will be important to have uniform control for distances measured with respect to $g_s$, at least for sufficiently small $s$. This is provided by the following refinement of Claim 1, whose proof is entirely analogous. \\
	
	 \noindent \textbf{Claim 2}: there exists $s_0>0$ with the following property: If $s\in [0,s_0]$, then for every minimising geodesic $\gamma_s$ in $(M^2,g_s)$ with $\gamma_s(0)\in \partial M$ and $L(\gamma_s,g_s)\geq 3\varepsilon/2$, there exists $t\in (0,3\varepsilon/2]$ such that $d_g(\gamma_s(t),\partial M)\geq \delta$.  \\
	
	From now on, we assume the parameter $s$ belongs to the interval $[0,s_0]$ determined by Claim 2.	
	
	The next claim gives an estimate on the length of the piece of a minimising geodesic, joining sufficiently far apart boundary points, outside the collar neighbourhood $V_\delta$. As expected, this piece is fairly long. \\
	
\noindent \textbf{Claim 3}: Let $p$, $q\in \partial M$ be such that $d_g(p,q)\geq \frac{4}{5}\mathcal{S}(\partial M,g)$. If $\gamma$ is a minimising geodesic in $(M^2,g)$ joining $p$ and $q$, then $L(\gamma\setminus V_\delta,g)\geq 3\mathcal{S}(\partial M,g)/5$. \\
	
	In fact, let $\gamma:[0,\ell]\rightarrow M$ be a minimising geodesic of $(M^2,g)$ joining the boundary points $p$ and $q$ in $(M^2,g)$. Since $\ell=L(\gamma)\geq d_g(p,q)> 2\varepsilon$, we may apply Claim $1$ at both extremities and conclude that there exists $t_1\in (0,\varepsilon]$ and $t_2\in [\ell-\varepsilon,\ell)$ such that $d(\gamma(t_i),\partial M)\geq \delta$. By the strict convexity of $\partial M$, a sufficiently small collar neighbourhood of $\partial M$ is foliated by strictly convex curves of $(M^2,g)$. Thus, by the maximum principle, 
	\begin{equation*}
		\gamma([t_1,t_2])\subset \{x\in M\,:\,d(x,\partial M)\geq \delta\}.
	\end{equation*} 
Therefore
\begin{equation*}
	L(\gamma\setminus V_\delta, g) \geq t_2-t_1 \geq \ell-\varepsilon - \varepsilon = L(\gamma) - 2\varepsilon=d_g(p,q)-2\varepsilon.
\end{equation*}
The claim now follows immediately from the definition of $\varepsilon$ and the lower bound on $d_g(p,q)$. \\

	Repeating the proof above using Claim 2 for sufficiently long minimising geodesics of $(M^2,g_s)$ joining boundary points, we deduce the following similar estimate: \\
	
\noindent \textbf{Claim 4}: Let $p$, $q\in \partial M$ be such that $d_{g_s}(p,q)> 3\varepsilon$. If $\gamma_s$ is a minimising geodesic in $(M^2,g_s)$ joining $p$ and $q$, then $L(\gamma_s\setminus V_\delta,g_s)\geq d_{g_s}(p,q)-3\varepsilon$.	\\
		
	We are now ready to make the final estimate. \\

\noindent \textbf{Claim 5}: every sweepout $\{p_t,q_t\}$, $t\in [0,1]$, of $\partial M$ satisfies  
\begin{equation*}
	\max_{t\in[0,1]} d_{g_s}(p_t,q_t)\geq \left(1+\frac{s}{2(1+s)}\right) \mathcal{S}(\partial M,g).
\end{equation*}

	In fact, by definition of $\mathcal{S}(\partial M,g)$, any sweepout of $\partial M$ must contain a pair $\{p,q\}\subset \partial M$ such that $d_g(p,q)\geq 4\mathcal{S}(\partial M,g)/5$. 
We estimate the distance between such points $p$ and $q$ in $(M^2,g_s)$ as follows. Let $\gamma_s$ be a minimising geodesic in $(M^2,g_s)$ joining $p$ and $q$. By properties $i)$ and $ii)$ of the definition of $g_s$, 
	\begin{equation*}
		L(\gamma_s\cap V_\delta,g_s)\geq L(\gamma_s\cap V_\delta,g) \quad \text{and} \quad L(\gamma_s\setminus V_\delta,g_s)= (1+s)L(\gamma_s\setminus V_\delta,g).
	\end{equation*}
	Hence, 
\begin{align*}
	d_{g_s}(p,q) & = L(\gamma_s,g_s) \\
						  & = L(\gamma_s\cap V_\delta,g_s) + L(\gamma_s\setminus V_\delta,g_s)  \\
						  & \geq L(\gamma_s\cap V_\delta,g) + (1+s)L(\gamma_s\setminus V_\delta,g) \\
						  & = L(\gamma_s,g) + s L(\gamma_s\setminus V_\delta,g) \\
						  & \geq d_g(p,q) + s L(\gamma_s\setminus V_\delta,g).
\end{align*}
	By \eqref{eqauxSL1}, we then have
\begin{equation}\label{eqauxSL2}
	d_{g_s}(p,q) \geq d_g(p,q) + \frac{s}{1+s}L(\gamma_s\setminus V_\delta,g_s).
\end{equation}
	Finally, since $d_{g_s}(p,q)\geq d_g(p,q)\geq 4\mathcal{S}(\partial M,g)/5> 3\varepsilon$, we can use Claim 4 and the definition of $\varepsilon$ so to estimate
\begin{align}\label{eqauxSL3}
			L(\gamma_s\setminus V_\delta,g_s)	  & \geq d_{g_s}(p,q)-3\varepsilon \nonumber \\
						  & \geq d_g(p,q)-3\varepsilon \nonumber \\
						  & \geq \frac{4}{5}\mathcal{S}(\partial M,g)-\frac{3}{10}\mathcal{S}(\partial M,g) \nonumber \\
						  & \geq \frac{1}{2}\mathcal{S}(\partial M,g).
\end{align}
	Combining \eqref{eqauxSL2} and \eqref{eqauxSL3}, we conclude that the inequality
\begin{equation*}
	d_{g_s}(p,q)\geq d_g(p,q) + \frac{s}{2(1+s)}\mathcal{S}(\partial M,g)
\end{equation*}
	holds for any pair $\{p,q\}$ of the sweepout that satisfies $d(p,q)\geq 4\mathcal{S}(M,g)/5$. Since, by definition of $\mathcal{S}(\partial M,g)$, there are such pairs with $d(p,q)$ arbitrarily close to $\mathcal{S}(M,g)$ in any sweepout, the Claim follows.  \\
	
	By the definition of $\mathcal{S}(\partial M,g_s)$, the proposition is now an immediate consequence of Claim 5.
\end{proof}

\section{Involutive symmetry}\label{sectioninvolutive}

	Let $\Omega$ be a totally convex disc with smooth boundary in a complete Riemannian surface $(M^2,g)$. In this section, we make an extra assumption about $\partial \Omega$, namely:
\begin{itemize}
	\item[$(\star\star)$] \textit{every two points of $\partial\Omega$ are joined by only one geodesic}.
\end{itemize}
\indent Our purpose is to characterise such discs when the width and the diameter of the boundary coincide, and when there exists an isometric involution $A:(\Omega,g) \rightarrow (\Omega,g)$ that does not fix any point of $\partial \Omega$. This is Theorem F.

\begin{thm}\label{thmFbis}
 Let $\Omega$ be a totally convex disc  with smooth boundary in a complete Riemannian surface $(M^2,g)$. Assume that every two points of $\partial \Omega$ are joined by a unique geodesic, that $\mathcal{S}(\partial \Omega)=diam(\partial \Omega)$, and that $(\Omega,g)$ admits an isometric involution $A$ with no fixed boundary points.

	Then the involution $A$ has a unique fixed point $x_0\in \Omega$, $\mathcal{S}(\partial \Omega)/2$ is not bigger than the injectivity radius of $(M^2,g)$ at $x_0$, and $ \Omega$ is the geodesic ball of $(M^2,g)$ with center at $x_0$ and diameter $\mathcal{S}(\partial \Omega)=diam(\partial \Omega)$.
\end{thm}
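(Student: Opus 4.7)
The plan is to identify the restriction $A|_{\partial \Omega}$ with the ``antipodal'' map $\phi$ provided by Proposition \ref{propthmCidafracaB}, and then to exploit the $A$-symmetry to show that every extremal minimising geodesic passes through the unique interior fixed point of $A$. By Brouwer's theorem $A$ admits a fixed point in $\Omega$, which is necessarily interior since $A$ has no fixed boundary points. The restriction $A|_{\partial \Omega}$ is a fixed-point-free continuous involution, hence monotone by Lemma \ref{lemcontinuousinvolution}, forcing $A$ to preserve the orientation of $\Omega$. At an interior fixed point $x_0$, $dA_{x_0} \in SO(T_{x_0}\Omega)$ is a non-identity involution (else $A \equiv \mathrm{id}$), so $dA_{x_0} = -I$. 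Thus $x_0$ is isolated with Lefschetz index $+1$, and $\chi(\Omega) = 1$ yields uniqueness.

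\textbf{Identification $A|_{\partial \Omega} = \phi$.} By Proposition \ref{propthmCidafracaB}, the hypothesis $\mathcal{S}(\partial \Omega) = \mathrm{diam}(\partial \Omega)$ supplies a monotone fixed-point-free involution $\phi$ of $\partial \Omega$ characterised by $d(x, \phi(x)) = \mathrm{diam}(\partial \Omega)$. Since $A$ is an isometry, $d(A(x), A(\phi(x))) = \mathrm{diam}(\partial \Omega)$, and uniqueness of the realiser gives $A \circ \phi = \phi \circ A$ on $\partial \Omega$. The loops $x \mapsto \{x, A(x)\}$ and $x \mapsto \{x, \phi(x)\}$ descend to homotopically non-trivial loops in $\mathcal{P}_* \cong \mathbb{RP}^2$ by Lemma \ref{lemcontinuousinvolution}; they must intersect, producing a $y \in \partial \Omega$ with $A(y) = \phi(y)$. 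Hence $A \circ \phi$ is an orientation-preserving involution of $S^1$ admitting a fixed point, and a direct lifting argument (any orientation-preserving involution of $S^1$ is either the identity or fixed-point-free) forces $A \circ \phi = \mathrm{id}$. Therefore $A = \phi$ on $\partial \Omega$, and $d(x, A(x)) = \mathcal{S}(\partial \Omega)$ for every boundary point.

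\textbf{Passage to the geodesic ball.} For each $x \in \partial \Omega$ the pair $\{x, A(x)\}$ is a maximum of $\mathcal{D}$, hence a critical point by Corollary \ref{cormaximumiscritical}; assumption $(\star\star)$ then ensures that the unique geodesic $\gamma_x$ joining $x$ to $A(x)$ is proper and free boundary. Uniqueness forces $A(\gamma_x) = \gamma_x$ with reversed parametrisation, so $A$ fixes the midpoint of $\gamma_x$; this midpoint must be $x_0$. Setting $r := \mathcal{S}(\partial \Omega)/2$, each half $\gamma_x|_{[0,r]}$ is a minimising geodesic from $x_0$ to $x$, so $d(x_0, x) = r$ uniformly on $\partial \Omega$. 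The assignment $v : x \in \partial \Omega \mapsto v(x) \in T_{x_0}^1 M$, where $v(x)$ is the initial direction of the half from $x_0$ to $x$, is continuous, injective (distinct endpoints produce distinct directions at $x_0$), and satisfies $v(A(x)) = -v(x)$; any continuous injection $S^1 \to S^1$ is a homeomorphism, so $v$ is surjective. Minimality of the halves excludes conjugate points in $B(0, r) \subset T_{x_0} M$, so $\exp_{x_0}|_{B(0,r)}$ is a local diffeomorphism; total convexity places $\exp_{x_0}(\overline{B(0, r)}) \subset \Omega$, and since every boundary point lies at exact distance $r$ from $x_0$, one has $\exp_{x_0}^{-1}(\partial \Omega) = \partial B(0, r)$. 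Consequently $\exp_{x_0}|_{B(0,r)} : B(0, r) \to \mathrm{int}(\Omega)$ is a proper local diffeomorphism between simply connected surfaces, hence a diffeomorphism. Combined with the boundary homeomorphism induced by $v$, we conclude $\exp_{x_0} : \overline{B(0, r)} \to \Omega$ is a homeomorphism, yielding $r \leq \mathrm{inj}_g(x_0)$ and $\Omega = \overline{B_M(x_0, r)}$.

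\textbf{Main obstacle.} The most delicate step is the identification $A|_{\partial \Omega} = \phi$ in the second paragraph, which ties together the isometric rigidity encoded in the uniqueness of $\phi$, the $\mathbb{RP}^2$-intersection of non-trivial loops in the sweepout space, and the classification of orientation-preserving $S^1$-involutions. Once this is in place, the propagation of the $A$-symmetry to the interior and the subsequent degree/covering arguments on $\exp_{x_0}$ are essentially routine.
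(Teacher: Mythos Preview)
Your proof is correct and takes a genuinely different route from the paper's. The paper argues \emph{outward from the fixed point}: it picks a fixed point $x_0$ and the boundary point $y_0$ closest to it, so that the minimising geodesic $\gamma_0$ from $x_0$ to $y_0$ is automatically orthogonal to $\partial\Omega$; the concatenation $C=\gamma_0\cup A(\gamma_0)$ is then a smooth embedded geodesic, and since $A$ swaps the two components of $\Omega\setminus C$ one reads off both $DA(x_0)=-\mathrm{Id}$ and the uniqueness of $x_0$ directly. By $(\star\star)$, $C$ is the unique (hence minimising) geodesic from $y_0$ to $A(y_0)$, so $\{y_0,A(y_0)\}$ is critical and Propositions \ref{propuniqu} and \ref{propthmCidafracaB} give $2d(y_0,x_0)=\mathcal S(\partial\Omega)$; for general $y$ the same concatenation trick yields $2d(y,x_0)=d(y,A(y))\le\mathrm{diam}(\partial\Omega)$, and the two-sided squeeze finishes. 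You instead argue \emph{inward from the boundary}: you first establish the global identity $A|_{\partial\Omega}=\phi$ by combining commutativity $A\phi=\phi A$, the $\mathbb{RP}^2$-intersection of the two non-trivial loops in $\mathcal P_*$, and the fact that an orientation-preserving involution of $S^1$ with a fixed point is the identity; once $d(x,A(x))=\mathrm{diam}$ for every $x$, each unique geodesic $\gamma_x$ is free boundary and $A$-invariant, and its midpoint is forced to be the Lefschetz-unique fixed point, giving $d(x,x_0)\equiv r$ in one stroke. Your approach front-loads a clean topological identification that makes the remainder transparent and uniform over $\partial\Omega$, at the price of invoking Lefschetz and the $S^1$-involution lemma; the paper's approach stays entirely elementary-geometric and never needs to match $A$ with $\phi$ globally, but pays for this with the auxiliary closest-point $y_0$ and a squeeze argument.
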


\begin{proof} By Lemma \ref{lemcontinuousinvolution}, the restriction of the isometric involution $A : \Omega\rightarrow \Omega$ to $\Gamma$ is a monotone homeomorphism. As a continuous map from the disc to itself, $A$ has fixed points (by Brouwer Theorem), and none of them lies in the boundary (by assumption). Let $x_0\in \Omega$ denote a fixed point of $A$ that is closest to ${\partial \Omega}$. 

	Since $A$ is an involutive isometry, $DA(x_0) : (T_{x_0}\Omega,g_{x_0}) \rightarrow (T_{x_0}\Omega,g_{x_0})$ is a linear isometry such that $DA(x_0)\circ DA(x_0)=Id_{T_{x_0}\Omega}$. In particular, the two eigenvalues of $DA(x_0)$ belong to $\{\pm 1\}$.

	Let $y_0$ be a point of $\partial \Omega$ such that $d(y_0,x_0)$ is the minimum distance between points of $\partial \Omega$ and $x_0$. It follows from the first variation formula \eqref{eqfirstvariation} that any minimising geodesic joining $x_0$ and $y_0$ is orthogonal to $\partial \Omega$ at $y_0$. Thus, there is only one such geodesic from $x_0$ to $y_0$. Call it $\gamma_0 : [0,a] \rightarrow \Omega$, where $a=d(y_0,x_0)$.
	
	Consider the piecewise smooth curve $C$ made of the two arcs $\gamma_0$ and $A(\gamma_0)$.  The extremities of $C$ are the boundary points $y_0$ and $A(y_0)$. This curve is moreover embedded, otherwise the two minimising geodesics issuing from $x_0$ that constitute it would intersect at some point $x\neq x_0$ in the interior of $\Omega$.
	
	Hence, the curve $C$ divides the disc $\Omega$ into two connected components (by Jordan Theorem). Since $A$ restricted to the boundary permutes the arcs determined by $y_0$ and $A(y_0)$, $A$ permutes the two components of $\Omega\setminus C$. Thus, no point in any of these components is fixed by $A$. Even more, $DA(x_0)$ cannot fix any non-zero vector of $T_{x_0} \Omega$ except possibly $\gamma_0'(0)$ and its multiples. However, if it does so, $\gamma_0$ is equal to $A\circ\gamma_0$, which is absurd because their end-points are $y_0\neq A(y_0)$. 
	
	This has two consequences. First, no other point of $\Omega$ besides $x_0$ is fixed by the involution $A$. Second, the number $1$ is not an eigenvalue of $DA(x_0)$. Therefore
	\begin{equation} \label{eqaux1}
		DA(x_0)=-Id_{T_{x_0}M}.
	\end{equation}
	
	Using \eqref{eqaux1}, it is immediate to see that the concatenated curve $C$ is smooth at $x_0$. It is also orthogonal to $\partial \Omega$ at $y_0$ and $A(y_0)$. Since there exists, by property $(\star\star)$, only one geodesic joining these two boundary points, we conclude that $C$ is a free boundary minimising geodesic joining $y_0$ and $A(y_0)$. In particular, $\{y_0,A(y_0)\}\subset \partial \Omega$ is a non-trivial critical point of $\mathcal{D}$, which moreover satisfies
\begin{equation} \label{eqaux2}
	\mathcal{S}(\partial \Omega)= d(y_0,A(y_0)) = L(C)= L(\gamma_0)+L(A(\gamma_0)) = 2d(y_0,x_0).
\end{equation}
The first equality in \eqref{eqaux2} is a consequence of the assumption $\mathcal{S}(\partial \Omega)=diam(\partial \Omega)$. (Recall Proposition \ref{propuniqu} and Proposition \ref{propthmCidafracaB}).

	Continuing the argument, let $y$ be any boundary point. Let $\gamma:[0,b]\rightarrow \Omega$ be some minimising geodesic joining $x_0$ and $y$. Arguing as before, it follows from \eqref{eqaux1} that the two minimising arcs $\gamma$ and $A(\gamma)$ are part of an embedded geodesic passing through $x_0$ with extremities $y$ and $A(y)$ in $\partial \Omega$. By property $(\star\star)$, it must be the unique geodesic joining $y$ and $A(y)$. In particular, it is minimising. Hence,
\begin{equation}\label{eqaux3}
	2d(y,x_0) = L(\gamma)+L(A(\gamma))
	 	= d(y,A(y))\leq diam(\partial \Omega).
\end{equation}	
	
	Since by assumption $\mathcal{S}(\partial \Omega)=diam(\partial \Omega)$, and by construction $d(y_0,x_0)\leq d(y,x_0)$ for all $y\in \partial \Omega$, it follows from \eqref{eqaux2} and \eqref{eqaux3} that
\begin{equation*}
	d(y,x_0)=\mathcal{S}(\partial \Omega)/2 \quad \text{for every} \quad y\in \partial \Omega.
\end{equation*}
	
	Now that we know that the distance between $x_0$ and boundary points is constant, using the first variation formula \eqref{eqfirstvariation} it is straightforward to check that a minimising geodesic joining $x_0$ and any $y\in \partial \Omega$ is orthogonal to $\partial \Omega$ at $y$, and is, therefore, unique.	
	
	Next, we argue that $\Omega$ is a metric and geodesic ball in $(M^2,g)$ of radius $r:=\mathcal{S}(\partial \Omega)/2$ and center $x_0$. Let $\exp$ denote the exponential map of $(M^2,g)$. Let $N$ be the inward-pointing unit normal vector field along $\partial \Omega$. We have proven above that, for every $y\in \partial \Omega$, the curve $t\in [0,r]\mapsto exp_y(tN(y))\in \Omega$ is the unique geodesic joining $y$ to $x_0$, which is therefore minimising. In particular, if $y\neq y'$ in $\partial \Omega$, then these curves meet only at $x_0$.
	
	As a consequence of this, the map 
	\begin{equation*}
		e: (y,t)\in \partial \Omega\times [0,r]\mapsto exp_y(tN(y))\in \Omega
	\end{equation*}
	descends to an injective, continuous map on the disc $\partial \Omega \times[0,r]/\sim$. (Here, $(y,t)\sim (y',t') \Leftrightarrow t=t'=r$). This continuous map between discs clearly restricts to a homemomorphism between the respective boundaries. By a standard topological argument, we conclude that $e$ is surjective as well.

	After proving all this, it is straightforward to check that $\exp_{x_0}: \overline{B_{r}(0)}\subset T_{x_0}M \rightarrow \Omega$ is a bijection, that all points of $\Omega$ lie within distance $r$ from $x_0$, and that all points of $\partial \Omega$ lie at distance $r$ from $x_0$. In other words, $\Omega$ is, in fact, a geodesic ball of $(M^2,g)$ centred at $x_0$ of radius $r=\mathcal{S}(\partial \Omega)/2\leq inj_{x_0}(M,g)$, as we wanted to prove.
\end{proof}

\section{Comparison between min-max quantities} \label{sectioncompare} Let $(M^2,g)$ be a Riemannian disc with non-negative curvature and strictly convex boundary. Recall Section \ref{introcomparison} for the relevant definitions of the min-max quantities $\omega(M,g)$ and $w_*$ associated to $(M^2,g)$. The aim of this section is to briefly describe some possible comparisons between these two quantities and $\mathcal{S}(\partial M)$.  

	We begin by explaining the example depicted in Figure 1 in Section \ref{introcomparison} in details. Let $\rho : [0, +\infty) \rightarrow [0, +\infty)$ be a smooth function such that $\rho(0)=0$, $\rho^{\prime}_+(0) = +\infty$, $\rho^{\prime}>0$ and $\rho^{\prime\prime} \leq 0$. Assume that $\rho(x) = mx+n$, for every $x\geq x_0$, where $m,n>0$. Let $0< x_0 < x_1$. Consider the Riemannian disc $(M^2,g)$ obtained by the rotation of the graph of the function $y = \rho(x)$, $x \in [0, x_1]$, around the $x$-axis. (See Figure \ref{Figura-loop} in Section \ref{introcomparison}). 

	The Gaussian curvature of $(M^2,g)$ is non-negative, and $\partial M$ has strictly positive geodesic curvature. Thus, we can regard it as totally convex disc inside some complete Riemannian surface. (\textit{Cf.} Example \ref{examconvex} in Section \ref{sectionexamples}).
	
	As it is always the case, $\omega(M,g) < L(\partial M)$. (This inequality has been proven in \cite{DonMon}, Section 6). On the other hand, all free boundary geodesics necessarily pass through the tip of $M$. Hence, if we now choose $m$ small and $x_1$ large enough, we guarantee that any free boundary geodesic is longer than $\partial M$. The curve $\beta$ in Figure \ref{Figura-loop} represents one free boundary geodesic, whose length coincides with $w_{\ast}$. 
	
	The main theorem of \cite{DonMon} implies that $\omega(M,g)$ is realised by the length of a geodesic loop $\alpha$ based at a boundary point, also depicted in Figure \ref{Figura-loop}. By the rotational symmetry, $S(\partial M) = diam(\partial M)$. (\textit{Cf.} Example \ref{examrotational} in Section \ref{sectionexamples}). Hence, the endpoints of the two extremal minimising geodesics $\gamma_{-}$ and $\gamma_{+}$ in Figure \ref{Figura-loop} determine a pair of points in $\partial M$ which is a critical point of $\mathcal{D}$ realising $S(\partial M)$. Notice that the velocity vectors of $\gamma_{\pm}$ point inside the open region between $\partial M$ and $\alpha$, because they cannot be part of the loop $\alpha$, or be part of the strictly convex boundary curve, or cross $\alpha$ by virtue of their minimisation property. Thus, $\partial M$ and $\alpha$ act as barriers while minimimising the length of curves with extremities equal to the extremities of $\gamma_{\pm}$.
	
	Therefore the inequalities (\ref{equation-widths-different}) hold in this case, that is,
	\begin{equation*}
    S(\partial M) < \frac{L(\partial M)}{2} < \omega(M, g) < L(\partial M) < w_{\ast}.
\end{equation*}

	We finish this section with the proof of a general comparison, of independent interest.
\begin{thm}\label{thmcompare}
	Let $(M^2,g)$ be a Riemannian disc with non-negative Gaussian curvature and strictly convex boundary. Then
	\begin{equation*}
		S(\partial M)\leq \omega(M, g) \leq w_{\ast}.
	\end{equation*}
	Moreover, if $S(\partial M)= \omega(M, g)$, then $S(\partial M)=w_{\ast}$ and the three numbers are equal to the length of a free boundary minimising geodesic.
\end{thm}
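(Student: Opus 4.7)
My plan is to establish the two inequalities separately, and then handle the equality case by combining the structural information on critical points of $\mathcal{D}$ given by Theorem B with the result of \cite{DonMon} that $\omega(M,g)$ is realised either as the length of a free boundary geodesic or of a geodesic loop based at $\partial M$.

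For $\omega(M,g) \leq w_\ast$, given any Zhou sweepout $\{\alpha_t\}$ with $\max_t E(\alpha_t) < w_\ast^2 + \epsilon$, I would associate to each $\alpha_t$ the relative $1$-current mod $2$ obtained by pushing forward the fundamental class of $[0,1]$; call it $c_t$. This gives a continuous (in the flat topology) family of relative $1$-cycles mod $2$ in $(M,\partial M)$ of mass at most $L(\alpha_t) \leq \sqrt{E(\alpha_t)}$ by Cauchy--Schwarz. Since Zhou's homotopy condition (iv) is designed precisely so that the resulting loop $\{c_t\}$ is homotopically non-trivial in the space of relative cycles mod $2$, it is a legitimate competitor for $\omega(M,g)$; taking the infimum over Zhou sweepouts yields $\omega(M,g) \leq w_\ast$.

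For $\mathcal{S}(\partial M) \leq \omega(M,g)$, I would take an Almgren--Pitts sweepout $\{c_t\}$ with $\max_t L(c_t) < \omega(M,g) + \epsilon$ and extract from it a pair-of-points sweepout. By a filling argument, each $c_t$ is the relative boundary of a continuous family of $2$-chains $\tilde c_t$ in $M$ with $\tilde c_0 = \emptyset$ and $\tilde c_1 = M$; the intersection $C_t := \tilde c_t \cap \partial M$ is then a continuous family of arcs of $\partial M$ satisfying $\partial C_t = \partial c_t$. After a standard discretisation reducing to the case where $C_t$ has at most two endpoints for generic $t$, I would let $\{p_t,q_t\} = \partial C_t$; by construction $\{p_t,q_t\}$ varies continuously with $C_0 = \{p_0\}$ and $C_1 = \partial M$, hence is a sweepout in the sense of Definition \ref{defiweepout}. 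The standard decomposition of the mod-$2$ relative $1$-cycle $c_t$ as a path from $p_t$ to $q_t$ together with a closed cycle yields $d(p_t,q_t) \leq L(c_t)$, and hence $\mathcal{S}(\partial M) \leq \omega(M,g)$.

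Suppose now $\mathcal{S}(\partial M) = \omega(M,g)$. Non-negative Gaussian curvature and strict convexity of $\partial M$ force property $(\star)$, so Theorems A and B apply, yielding a critical point $\{p,q\}\subset \partial M$ of $\mathcal{D}$ with $d(p,q) = \mathcal{S}(\partial M)$ that bounds either a free boundary minimising geodesic $\gamma$ of free boundary index one, or a pair of simultaneously stationary minimising geodesics $\gamma_1,\gamma_2$, neither free boundary. In the first case I would adapt the construction of Proposition \ref{propstarcritical1} to the energy functional, using the one-dimensional negative direction of the second variation at $\gamma$ together with iterated Birkhoff chord-shortening along each side of $\gamma$ as in \cite{Zho}, to produce a Zhou sweepout with maximum energy $E(\gamma) = L(\gamma)^2$; this gives $w_\ast \leq L(\gamma) = \mathcal{S}(\partial M)$. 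In the second case I would apply a parallel adaptation of Proposition \ref{propstarcritical2}, shortening each of the two regions determined by $\gamma_1,\gamma_2$ down to a point while checking that a free boundary minimising geodesic of length $\mathcal{S}(\partial M)$ emerges in the limiting configuration. Combined with the previous inequalities, all three quantities coincide with the length of a free boundary minimising geodesic. The primary technical obstacle is the $\mathcal{S}(\partial M) \leq \omega(M,g)$ direction: extracting a continuous pair-of-points sweepout from a potentially network-valued Almgren--Pitts sweepout across the finitely many parameter values where the combinatorial type of $c_t$ changes, while preserving the correct homotopy class in $\mathcal{P}_\ast$. A secondary subtlety is the equality case when the critical configuration is a simultaneously stationary pair rather than a single free boundary geodesic, where producing an explicit free boundary minimising geodesic of length $\mathcal{S}(\partial M)$ requires a limiting argument that goes beyond the basic existence statement of Theorem A.
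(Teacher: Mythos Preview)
Your approach differs substantially from the paper's, and the equality case contains a genuine gap.

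For both inequalities the paper works from the \emph{realisations} of $w_\ast$ and $\omega(M,g)$ rather than from arbitrary sweepouts. Since $w_\ast = L(\beta)$ for some free boundary geodesic $\beta$ (by \cite{Zho}), and $\beta$ is unstable by $(\star)$, the construction of Proposition~\ref{propstarcritical1} gives a curve sweepout $\{c_t\}$ of $M$ with $L(c_t)\le L(\beta)$, admissible for $\omega$; hence $\omega\le w_\ast$. For $\mathcal{S}\le\omega$ the paper invokes \cite{DonMon}: $\omega(M,g)$ is realised either by a free boundary geodesic or by a geodesic loop $\alpha$, and in either case the Birkhoff process (Proposition~\ref{propstarcritical1} or Lemma~3.1 of \cite{DonMon}) furnishes an explicit curve sweepout whose endpoint pairs form a sweepout of $\partial M$. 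This bypasses entirely the problem you flag as the ``primary technical obstacle'', namely extracting a continuous pair-of-points sweepout from a general, possibly network-valued, Almgren--Pitts sweepout. Your filling argument is plausible in outline, but the ``standard discretisation'' reducing $C_t$ to a single arc while preserving both continuity in $t$ and the bound $d(p_t,q_t)\le L(c_t)$ is not standard and would need real work.

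The more serious issue is your equality argument. You start from a critical point of $\mathcal{D}$ at level $\mathcal{S}(\partial M)$ and split into the two cases of Theorem~B. In the simultaneously stationary case you propose to ``check that a free boundary minimising geodesic of length $\mathcal{S}(\partial M)$ emerges in the limiting configuration''; there is no mechanism for this. The sweepout of Proposition~\ref{propstarcritical2} is a \emph{pair} sweepout whose underlying curve family jumps discontinuously from $\gamma_1$ to $\gamma_2$ at $t=1/2$, so it is not admissible for $w_\ast$, and the Birkhoff process on either side terminates at points, not geodesics. The paper avoids this case by arguing from the $\omega$ side: in the loop case $\partial\alpha$ is a single point, so $\mathcal{D}(\partial c_t)$ is small for $t$ near the loop and the induced pair sweepout shows $\mathcal{S}(\partial M)<\omega(M,g)$ strictly. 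Hence equality forces $\omega$ to be realised by a free boundary geodesic $\beta$; the curve sweepout through $\beta$ is then admissible for $w_\ast$, giving $w_\ast\le L(\beta)=\omega$, and if $\beta$ were not minimising then $\mathcal{D}(\partial\beta)<L(\beta)$ would again make the induced pair sweepout violate $\mathcal{S}=\omega$. This produces the required free boundary minimising geodesic directly, without ever confronting the simultaneously stationary configuration.
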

\begin{proof}
Both inequalities can be obtained as consequences of the existence of the sweepouts constructed in Sections 3 and 4 of \cite{DonMon} using the curve shortening process of Birkhoff adapted to the free boundary setting. Indeed, we know that $w_{\ast} = L(\beta)$ for some free boundary geodesic $\beta$, by \cite{Zho}. Proposition \ref{propstarcritical1} implies that $M$ can be swept out by a path of curves $\{c_t\}$ such that $L(c_t) \leq L(\beta)$. These curves have extremities in $\partial M$ and form a sweepout of $M$ that is admissible for $\omega(M,g)$. Therefore, $\omega(M,g)\leq w_{\ast}$.

	If $\omega(M,g)$ is realised as the length of a free boundary geodesic, the same argument produces a path $\{c_t\}$ which induces a sweepout $\{\partial c_t\}$ of $\partial M$ in the sense of Definition \ref{defiweepout}. In this case, we conclude that $S(\partial M)\leq \omega(M,g)$. 

	Alternatively, according to \cite{DonMon}, $\omega(M,g)=L(\alpha)$ is realised as the length of a geodesic loop similarly to the configuration of Figure \ref{Figura-loop}. Let then $R$ be the region bounded by $\partial M \cup \alpha$. We can use Lemma 3.1 of \cite{DonMon} to sweep $R$ out by curves $\{c_t\}$ of lengths bounded by $L(\alpha)$. This is achieved by an application of the adapted Birkhoff's shortening process starting with the loop $\alpha$. As before, $\{\partial c_t\}$ defines a sweepout of $\partial M$ that is admissible to estimate $\mathcal{S}(\partial M)$, which implies that $S(\partial M)\leq \omega(M,g)$.

Notice that the curves $c_t$ with length close to $L(\alpha)$ are not minimising. Indeed, $c_0 = \alpha$ and this curve has the same initial and final point. In particular, $\mathcal{D}(\partial \alpha) = 0$. Similarly, $\mathcal{D}(\partial c_t)$ is close to zero for small values of $t>0$. Therefore, if $\omega(M,g)$ is realised as the length of a loop, then we actually have $S(\partial M,g) < \omega(M,g)$.

To finish the proof, suppose now that $S(\partial M) = \omega(M,g)$. By the previous analysis, these two numbers are realised as the length of a free boundary geodesic $\beta$. It follows that $w_{\ast}$ must coincide with the first two invariants. Recall that the sweepout $\{c_t\}$ of $M$ obtained before satisfies also $L(c_t) < L(c_{1/2}) = L(\beta)$ for all $t\neq 1/2$. If $\beta$ is not minimising, then $\mathcal{D}(\partial \beta) < L(\beta)$. This implies that the maximum of $\mathcal{D}(\partial c_t)$ is strictly smaller than $L(\beta) = \omega(M,g) = S(\partial M)$, a contradiction. In conclusion, $\beta$ is a free boundary minimising geodesic, and $\partial \beta$ is a critical point of $\mathcal{D}$.
\end{proof}

\section{Examples} \label{sectionexamples}

\begin{exam} \label{examhemisphere}
	The equator of the two-dimensional Euclidean sphere of radius one is a smoothly embedded curve of length $2\pi$. Any critical point of $\mathcal{D}$ consists of a pair of antipodal boundary points, which are joined by minimising geodesics of length $\pi$ that form a set parametrised by a circle. Thinking of the equator as the boundary of the southern hemisphere, it is clear that any pair of antipodal points is joined by a unique free boundary minimising geodesic, while any two geodesics that lie in different components of the hemisphere determined by the free boundary geodesic are simultaneously stationary. The width of the equator is equal to half of its length. This closed embedded geodesic has Morse index one. 
\end{exam}

\begin{exam}\label{examconvex}
	Let $(M^2,g)$ be a Riemannian surface with compact boundary that is strictly convex in the sense that the geodesic curvature vector of $\partial M$ points strictly inside $M$. Then $(M^2,g)$ can be isometrically embedded into a larger complete Riemannian manifold in such way that the complement of $M$ is foliated by circles with geodesic curvature vector pointing towards $M$. (Proof: move $M$ inside by the gradient flow of the distance to the boundary, pull back the metric, and blow up the collar region into ends that have the property above, by a suitable conformal factor). By a curvature comparison argument for curves that lie on one side of the other and intersect tangentially, the latter condition implies that $M$ contains all geodesics of this larger manifold with extremities in $M$. In other words, $(M^2,g)$ is a totally convex Riemannian surface with smooth compact boundary in some complete Riemannian surface. In this case, the width of each boundary component of $\partial M$ is a geometric invariant of $(M^2,g)$.
\end{exam}

\begin{exam}\label{examrotational}
Let $(M^2,g)$ be a Riemannian disc with strictly convex boundary as in Example \ref{examconvex}. Assume, in addition, that $(M^2,g)$ is rotationally symmetric. We claim that all non-trivial critical points of $\mathcal{D}$ are of the form $\{x, A(x)\}\subset \partial M$, where $A: M \rightarrow M$ represents the rotation by $180$ degrees. In particular, $d(x,A(x)) = diam(\partial M)$ = $\mathcal{S}(\partial M)$ for all $x\in \partial M$.

Indeed, let $\{x,y\}\subset \partial M$ be an arbitrary critical point of $\mathcal{D}$. If $R_{\theta} : M\rightarrow M$ is a rotation by $\theta\in[0,2\pi)$, then $\{R_{\theta}(x), R_{\theta}(y)\}\subset \partial M$ is also a critical point of $\mathcal{D}$, because $R_\theta$ is an isometry. The uniqueness property (Proposition \ref{propuniqu}) then implies that if $\theta_0$ is such that $R_{\theta_0}(x)=y$, then $R_{\theta_0}(y)=x$. Therefore $R_{2\theta_0}(x)=x$, \textit{i.e.} $y=R_{\theta_0}(x) =A(x)$ is the rotation of $x$ by $180$ degrees. The claim follows.
\end{exam}

\begin{exam} \label{examellipsoids}
	Given $a>0$, let
	\begin{equation*}
		\mathcal{E}_a=\Big\{(x,y,z)\in \mathbb{R}^3\,|\, x^2+y^2+\frac{z^2}{a^2} = 1 \Big\}
	\end{equation*} 
	be an ellipsoid of revolution around the $z$ axis. Given $\delta>0$, the disc $M_{a,\delta}=\mathcal{E}_{a}\cap\{z\geq \delta\}$ has non-negative Gaussian curvature and strictly convex boundary. According to Example \ref{examrotational}, the critical points of $\mathcal{D}$ are of the form $\{(x,y,\delta),(-x,-y,\delta)\}\subset \partial M$ and satisfy
	\begin{equation*}	
		d((x,y,\delta),(-x,-y,\delta))=\mathcal{S}(\partial M_{a,\delta})=diam(\partial M_{a,\delta}).
	\end{equation*}
	
	For all $a>0$ and $\delta>0$, the critical points bound an unstable free boundary geodesic that goes through the point $(0,0,a)$.
	
	For $0<a<1$, the critical points bound exactly one minimising geodesic, which is the free boundary geodesic with length $<L(\partial M_{a,\delta})/2$.
	
	For $a>1$, if $\delta>0$ is sufficiently small, then none of the free boundary geodesics are minimising, because they all go through $(0,0,a)$ and therefore have length strictly bigger than half the length of $\partial M_{a,\delta}$. The boundary points $(x,y,\delta)$ and $(-x,-y,\delta)$ bound two minimising geodesics that are simultaneously stationary (in fact, one is the reflected image of each other, by the reflection that fixes the free boundary geodesic joining $(x,y,\delta)$ and $(-x,-y,\delta)$).
\end{exam}

\begin{exam}\label{examhalfdumbbell}
	
Let $\Omega$ be the plane region bounded by an ellipse determined by the equation $x^2/a^2+y^2/b^2=1$, where $a\geq b>0$ will be chosen. Let $R_1, R_2\subset \Omega$ be the regions represented in Figure \ref{sugarloaf}. Assume that $\Omega\setminus(R_1\cup R_2)$ is the union of a small collar neighbourhood of $\partial \Omega$ with a small neighbourhood of the segment representing the minor axis of the ellipse.

\begin{figure}[htp]
    \centering
    \includegraphics[width=12cm]{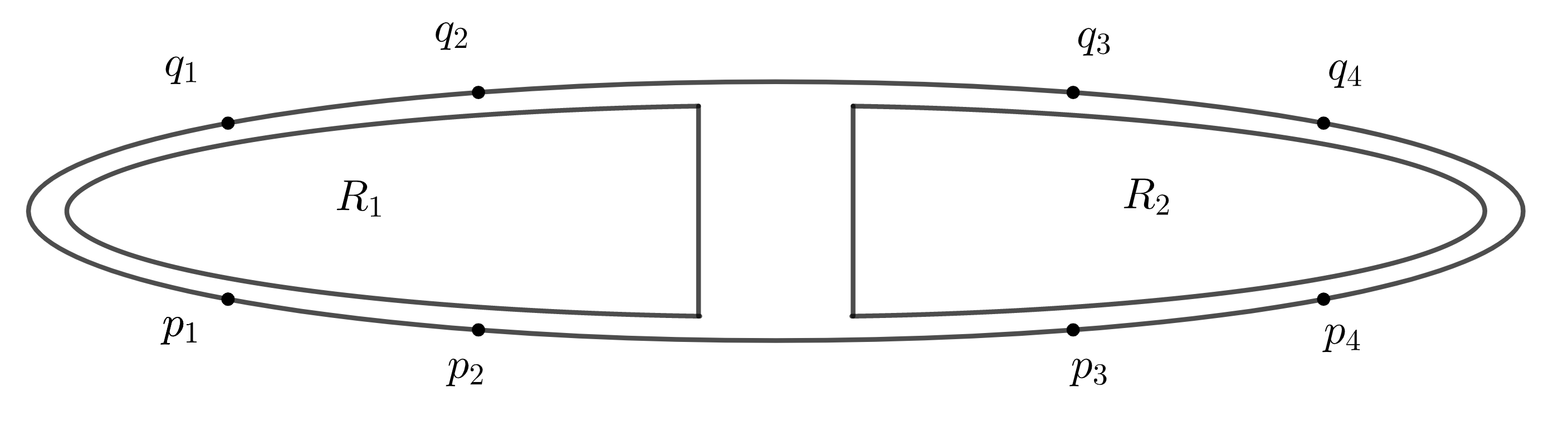}
    \caption{}
    \label{sugarloaf}
\end{figure}

Let $g$ be a Riemannian metric on $\Omega$ that coincides with the Euclidean flat metric outside a small neighbourhood of $R_1\cup R_2$ contained in $\Omega$, and is very large compared to the Euclidean metric in $R_1\cup R_2$ (in the sense that the size of tangent vectors at points in $R_1\cup R_2$ measured according to $g$ are much larger than their Euclidean sizes). In particular, the metric is Euclidean near $\partial \Omega$. Clearly, $\Omega$ is a totally convex disc with smooth boundary in $(\mathbb{R}^2,g)$, where $g$ extends as the Euclidean outside $\Omega$. We claim that $(\Omega,g)$ contains a minimising geodesic $\gamma$ that is free boundary stable, and that
\begin{equation*}
	L(\gamma) < 2b< \mathcal{S}(\partial \Omega),
\end{equation*}
as soon as $b$ and $a$ are chosen appropriately.

In order to see this, let $p_i$ and $q_i$ be points in $\partial \Omega$ as indicated in Figure \ref{sugarloaf}. Let $A$ be the closed arc of the ellipse from $q_3$ to $q_2$ in the counterclockwise direction, and $B$ be the arc from $p_2$ to $p_3$. Let $q\in A$ and $p\in B$ be a pair of points realising the distance between $A$ and $B$ with respect to the metric induced by $g$. If the arcs $A$ and $B$ are large enough compared to $2b$, then $q \in int(A)$, and $p\in int(B)$. Indeed, a path joining $p_2$ or $p_3$ to the arc $A$ either crosses the region $R_1\cup R_2$, where the metric is large, or goes around this region. Making $p_2$ and $p_3$ far enough from the $y$-axis and from the vertices of the ellipse, we conclude that the distance with respect to $g$ from $p_2$ or $p_3$ to $A$ is larger than $2b$. On the other hand, since the metric is flat near the minor axis we have $d(A,B)<2b$. This shows that $p\in int(B)$. Similarly we have $q\in int(A)$. In particular, $\{p,q\}$ is a local minimum of $\mathcal{D}$, which implies that there exists a free boundary minimising geodesic $\gamma$ from $p$ to $q$ which is free boundary stable (Proposition \ref{proplocalminima}) and $L(\gamma)<2b$.

Let $C$ be the arc of the ellipse from $q_1$ to $p_1$ in the counterclockwise direction, and $D$ be the arc from $p_4$ to $q_4$. Assume that $p_1$ is far from $p_2$, $q_1$ is far from $q_2$, and both $p_1$ and $q_1$ are far from the tip of the ellipse compared to the value $2b$. Therefore, $d(p_1,q_1)$ is also large compared to $2b$. Moreover, if $y\in \partial \Omega$ and $d(p_1,y)< 3b$, then $y$ belongs to the open arc of the ellipse from $q_1$ to $p_2$. The points $p_3$, $p_4$, $q_3$, and $q_4$ are centrally symmetric to $p_2$, $p_1$, $q_2$, and $q_1$, respectively. Therefore they have analogous properties.

Finally, we can argue that the width of $\partial \Omega$ in $(\Omega,g)$ is strictly bigger than $2b$. Fix a sweepout $\{p_t, q_t\}$ of $\partial\Omega$. Arguing as in the proof of Proposition \ref{propthmCvolta}, we conclude that every sweepout contains a pair $\{x,-x\}$ of centrally symmetric points of $\partial\Omega$. If $x$ belongs to one of the arcs $C$ or $D$, then $-x$ belongs to the other, and $d(x, -x) \geq 3b$. If $x$ belongs to the arc from $p_1$ to $p_4$, then $-x$ belongs to the arc from $q_4$ to $q_1$. Vary the pair $\{x,-x\}$ continuously by pairs of the sweepout until the first time one has a pair $\{p_{t_0},q_{t_0}\}$ with a point in $C\cup D$. This happens because otherwise $p_t$ and $q_t$ would be trapped in the disjoint arcs from $q_4$ to $q_1$, and from $p_1$ to $p_4$. Without loss of generality, suppose $p_{t_0} = p_1$. Our choice implies that $q_{t_0}$ belongs to the arc from $q_4$ to $q_1$. The property obtained in the preceding paragraph implies that $d(p_{t_0},q_{t_0})\geq 3b$. Since the sweepout is arbitrary, we conclude that $S(\partial\Omega, g) > 2b$.
\end{exam}

\begin{exam}\label{exammodifiedellipsoid}
	Consider again the discs $M_{a,\delta}$ inside the ellipsoid $\mathcal{E}_{a}$ from Example \ref{examellipsoids}, where $a>1$ and $\delta>0$ is sufficiently small. Pick $\varepsilon>0$ and denote by $B_\varepsilon$ the open metric ball around $(0,0,a)$ of radius $\varepsilon>0$. If $\varepsilon>0$ is sufficiently small depending only on $a$ and $\delta$, no curve in $M_{a,\delta}$ that joins points $(x,y,\delta)$ and $(-x,-y,\delta)$ and intersects $B_\varepsilon$ is minimising. Thus, no modification of the metric with compact support in $B_{\varepsilon}$ changes the value of $\mathcal{D}$ on pairs of points in $\partial M_{a,\delta}$. In fact, one may even attach handles and cross caps inside this ball, without changing the fact that the width and the diameter of $\partial M_{a,\delta}$, as the boundary of these compact totally convex Riemannian surfaces with different topologies, are equal.
	
	In particular, we may even arrange this modification of the metric supported on $B_\varepsilon$ in such way that the map $A:x\in \Omega_{a,\delta }\mapsto -x\in \Omega_{a,\delta}$ is still an isometry without fixed points in $\partial \Omega_{a,\delta}$, while $\Omega_{a,\delta}$ is not rotationally symmetric. 
\end{exam}

\begin{exam}\label{examthmF}
	There are many other interesting examples of smoothly embedded discs in Riemannian manifolds whose boundaries satisfy one of the equivalent properties described in Theorem C (Theorem \ref{thmCbis}). For instance, let $x_0$ be a point in a complete Riemannian surface $(M^2,g)$, and assume $r>0$ is smaller than the convexity radius of $(M^2,g)$ at $x_0$. Then the geodesic ball $B_r(x_0)$ is diffeomorphic to a disc and $\partial B_r(x_0)$ is a smoothly embedded, strictly convex circle, and moreover the minimising geodesics joining points of $B_r(x_0)$ lie in $B_{r}(x_0)$ and are unique. (This is stronger than $(\star\star)$). Given a unit vector $v\in T_{x_0}M$, the geodesic $t\in [-r,r]\mapsto \exp_{x_0}(tv)\in M$ is the unique minimising geodesic joining the points $\exp_{x_0}(\pm r v)$. It has length $2r$. The obvious continuous map $\exp_{x_0}(-rv) \in \partial B_r \mapsto exp_{x_0}(rv)\in \partial B_r$ is therefore a smooth map such that $d(\exp_{x_0}(-rv),\exp_{x_0}(rv))=2r$. Since by the triangle inequality we have $diam(\partial B_r)=2r$, it follows from Proposition \ref{propthmCvolta} that $\mathcal{S}(\partial B_r)=diam(\partial B_r)$.
	
	This class of examples also shows that the characterisation in Theorem F (Theorem \ref{thmFbis}) is sharp, because if a sufficiently small ball $B$ has an isometric involution fixing the center of $B$, then it has properties $(\star\star)$ and $\mathcal{S}(\partial B)=diam(B)$. Notice that such ball is not necessarily rotationally symmetric.

\end{exam}

\begin{exam}\label{examellipsoids3}
	Consider the ellipsoid $\mathcal{E}_a$ of Example \ref{examellipsoids}. When $a>1$, the vertical geodesic $\Gamma=\mathcal{E}_a\cap\{x=0\}$ satisfies $\mathcal{S}(\Gamma)<diam(\Gamma)=L(\Gamma)/2$. This can be seen from the estimates for the horizontal sweepouts determined by the intersection of $\Gamma$ with horizontal planes, and from Lemma \ref{lemdiamlength}.
\end{exam}

\begin{exam}\label{examfilling}
	When $a>1$, the limit $\delta \rightarrow 0$ of the examples described in Example \ref{exammodifiedellipsoid} are examples of fillings of the unit circle $\{(x,y)\in \mathbb{R}^2\,|\, x^2+y^2=1\}$. This is a particular example of a much more general construction that yields many examples of smoothly embedded circles $\Gamma$ with $\mathcal{S}(\Gamma)=L(\Gamma)/2$ in Riemannian spheres. (\textit{Cf.} \cite{Gro1}).
	
	Let $g$ be a Riemannian metric on the real projective plane $\mathbb{RP}^2$. Let $\gamma$ be the simple closed geodesic that realises the least length among homotopically non-trivial loops of $(\mathbb{RP}^2,g)$. The metric $g$ lifts to a Riemannian metric on the sphere $S^2$, and $\gamma$ lifts to a simple closed geodesic $\Gamma$ in $(S^2,g)$. We claim that $\mathcal{S}(\Gamma)=L(\Gamma)/2$. In fact, pick any point $x\in \Gamma$, and consider the lift of $\gamma$ to $(S^2,g)$ based at $x$. Since $\gamma$ is homotopically non-trivial, the lift is an open arc of length $L(\gamma)=L(\Gamma)/2$ that ends at a point $\phi(x)$. Any other curve $c$ joining $x$ and $\phi(x)$ in $(S^2,g)$ projects down to a homotopically non-trivial loop, whose length is therefore at least $L(\gamma)=L(\Gamma)/2$. Thus, $d(x,\phi(x))=L(\Gamma)/2$. In particular, $diam(\Gamma)=L(\Gamma)/2$.  Since the map $x\in \Gamma \mapsto \phi(x)\in \Gamma$ is continuous (it is just the restriction to $\Gamma$ of the deck transformation of $S^2$ that produces $\mathbb{RP}^2$ as a quotient), the claim now follows by Theorem C (Theorem \ref{thmCbis}).
\end{exam}

\begin{exam}\label{examcylhem}
	Consider the cylinder $S^1\times \mathbb{R}$ endowed with the flat product metric. Since it is a complete surface foliated by geodesics $\Gamma_t=S^1\times\{t\}$, $t\in \mathbb{R}$, it is straightforward to check that the distance between points in $\Gamma_t$ is realised by the shortest arc of $\Gamma_t$ determined by these points. By Theorem C (Theorem \ref{thmCbis}), the curves $\Gamma_t$ satisfies $S(\Gamma_t)=L(\Gamma_t)/2$. Notice that, as closed geodesics, $\Gamma_t$ have Morse index zero. 
	
	An example of smoothly embedded circle with the same properties inside a compact surface can be obtained by capping the cylinder sufficiently far from $\Gamma_0$ on both sides, so that geodesics with extremities in $\Gamma_0$ that leave the cylindrical region have length bigger than $L(\Gamma_0)/2$.
\end{exam}

\begin{exam} \label{examellipse}
	For every $a\neq 1$, the ellipse 
	\begin{equation*}
		\Omega_{a} = \Big\{ (x,y)\in \mathbb{R}^2\,|\, x^2+\frac{y^2}{a^2} = 1  \Big\}
	\end{equation*}	
	bounds a disc with strictly convex boundary that contains only two non-trivial critical points of $\mathcal{D}$, namely, $\{(-1,0),(1,0)\}$ and $\{(0,-a),(0,a)\}$.  
\end{exam}

\begin{exam}
	Going beyond space forms, different notions of convex subsets of constant width in Riemannian manifolds have been proposed. For instance, generalizations of some classical results for convex bodies in the Euclidean space are obtained in \cite{Dek}, under much more restrictive convexity conditions than the ones ever assumed in this paper.
	
	The works of Robertson \cite{Rob} and Bolton \cite{Bol} proposed a different generalisation, which does not involve any convexity assumption and has a more topological flavour. They call a hypersurface $N$ of a complete Riemannian manifold a \textit{transnormal hypersuface} when every geodesic that intersects $N$ orthogonally at one point intersects $N$ orthogonally at every other point of intersection. These hypersurfaces enjoy interesting topological and geometric properties (see \cite{Rob}, \cite{Bol}, \cite{Nis}, \cite{Nis2}, \cite{Weg1} and \cite{Weg2} for a non-exhaustive list of works on this subject).
	
	If a transnormal circle $\Gamma$ in a Riemannian surface is the boundary of a compact totally convex region $(\Omega,g)$, then for every point $x$ there exists a unique $\phi(x)\neq x$ in $\Gamma$ such that $\{x,\phi(x)\}$ bounds a unique free boundary geodesic. Indeed, every such $\Omega$ contains a free boundary geodesic $\gamma$. Let $\{p,q\} = \partial \gamma$. Since $\gamma$ crosses $\partial \Omega$ transversally at $p$ and $q$, if $\tilde{p} \in \partial \Omega$ is sufficiently close to $p$, then the geodesic $\tilde{\gamma}$ starting at $\tilde{p}$ normal to $\partial \Omega$ will cross the boundary curve a second time at a point $\tilde{q}$ near $q$. The transnormality of $\partial \Omega$ implies that the portion of $\tilde{\gamma}$ connecting $\tilde{p}$ and $\tilde{q}$ is a free boundary geodesic. The compactness of the set of proper free boundary geodesics finishes the argument.

	Moreover, by the first variation formula \eqref{eqfirstvariation}, all the free boundary geodesics connecting $x$ and $\phi(x)$ have the same length. Also, $\phi(\phi(x))=x$. The map $x\in \Gamma\mapsto \phi(x)\in \Gamma$ is therefore a monotone homeomorphism (Lemma \ref{lemcontinuousinvolution}). 

	It may happen that these free boundary geodesics are not minimising geodesics, so it is not clear that $\{x,\phi(x)\}$ are critical points of $\mathcal{D}$. However, if moreover $(\Omega,g)$ has property $(\star\star)$, then it is clear that these free boundary geodesics are minimising, so that $\{x,\phi(x)\}$, $x\in \Gamma$, are in fact critical points of $\mathcal{D}$. By Theorem C (Theorem \ref{thmCbis}), it follows that $\mathcal{S}(\partial \Omega)=diam(\partial \Omega)$ in this case.
	
	The class of examples discussed in Example \ref{exammodifiedellipsoid} contains examples of discs with strictly convex boundary with $\mathcal{S}(\partial \Omega)=diam(\partial \Omega)$ that are not transnormal. In what follows, we describe three examples of curves that are not transnormal, and which nevertheless have the same width and diameter. 
	
	Let $\mathcal{E} = \mathcal{E}(a,b,c)$ be an ellipsoid $x^2/a^2+y^2/b^2+z^2/c^2=1$, with $a> b > c$, all close to $1$. This surface has exactly simple three simple closed geodesics, the curve $\Gamma = \mathcal{E}\cap\{x=0\}$ being the shortest one. We claim that $S(\Gamma) = diam(\Gamma) = L(\Gamma)/2$, but $\Gamma$ is not transnormal. In fact, notice that $\mathcal{E}$ is the Riemannian lift of a Riemannian $\mathbb{RP}^2$, and that $\Gamma$ is the lift of the shortest homotopically non-trivial curve in that $\mathbb{RP}^2$. As in Example \ref{examfilling}, the first part of the claim follows. On the other hand, since the two symmetric halves of the orthogonal geodesics $\mathcal{E}\cap\{y= 0\}$ and $\mathcal{E}\cap\{z= 0\}$ to $\Gamma$ have different lengths, $\Gamma$ is not transnormal.

	Next we explain how to obtain an example from those of Example \ref{exammodifiedellipsoid}. Recall that these examples have a rotationally symmetric part, and a region where the metric is modified. In the present context, it is convenient to modify the metric in such a way that the tip $B_{\varepsilon}$ is the graph of a positive function $z=f(x,y)$ over a disc, with two points of maximum at $(\alpha, 0)$ and $(-\alpha, 0)$ and one saddle point at $(0,0)$. Assume that the disc $\Omega$ is invariant under the reflections with respect to the $xz$-plane and the $yz$-plane. This surface resembles a mountain with rotationally symmetric base, and two peaks. The symmetries imply that the curves $\gamma_1 = \Omega\cap\{y=0\}$ and $\gamma_{2} = \Omega\cap \{x=0\}$ are free boundary geodesics. The curve $\gamma_1$ goes up the two peaks, while $\gamma_2$ has maximum height at saddle point. If $L(\gamma_1)\neq L(\gamma_2)$, then $\partial \Omega$ cannot be transnormal.

	Finally, we present an example in higher codimension. Let $\Gamma$ be a smoothly embedded circle in the Euclidean space that is contained in the unit sphere and is centrally symmetric. Since $d(x,y) = ||x-y|| \leq 2$ and $d(x, -x) = 2$ for every $x$, $y \in \Gamma$, we have $diam(\Gamma) = 2$. Moreover, the map $\phi : \Gamma \rightarrow \Gamma$ defined by $\phi(x) = -x$ is continuous and satisfies $d(x, \phi(x)) = 2$. Thus, Theorem C (Theorem \ref{thmCbis}) implies that $S(\Gamma) = diam(\Gamma)$. However, not all such curves $\Gamma$ are transnormal in $(\mathbb{R}^3,can)$. 	
\begin{figure}[htp]
    \centering
    \subfloat{{\includegraphics[width=4.5cm]{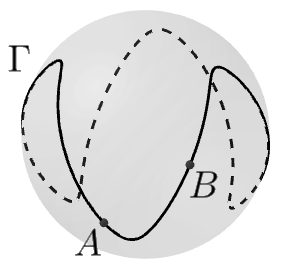} }}%
    \qquad
    \subfloat{{\includegraphics[width=4.5cm]{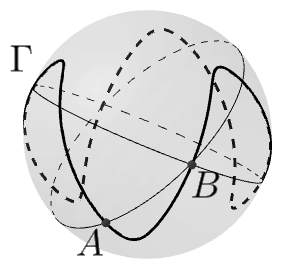} }}%
    \caption{Curve $\Gamma$, two points $A, B\in \Gamma$ (left), and great circles normal to $\Gamma$ at the points $A$ and $B$.}%
    \label{not-transnormal-curve}%
\end{figure}	
	
	We exemplify one of such curves in Figure \ref{not-transnormal-curve}. On the left part of the figure, we see the curve $\Gamma$ inside the unit sphere and two points $A$, $B \subset \Gamma$ such that the line segment from $A$ to $B$ is perpendicular at $A$, but not at $B$. On the right part of the figure, this property can be better visualised. The depicted great circles in the unit sphere which are normal to $\Gamma$ at $A$ and $B$ define the planes that contain all segments orthogonal to $A$ and $B$, respectively. These circles cross $\Gamma$ in 6 points, $A$ and $B$ included. However, the line segment connecting $A$ and $B$ is not normal at $B$, since $A$ is not contained in the normal circle of $\Gamma$ at $B$.
\end{exam}


\begin{thebibliography}{99}
%
\bibitem{Ald} I. Adelstein, \textit{Minimizing geodesic nets and critical points of distance}, Differential Geom. Appl. \textbf{70} (2020), 101624, 5 pp.
%
\bibitem{Abbalii} A. Abbondandolo, B. Bramham, U. Hryniewicz, and P. Salom\~ao, \textit{Sharp systolic inequalities for Reeb flows on the three-sphere}, Invent. Math. \textbf{211}
(2018), 687-778.
%
\bibitem{AmbMon} L. Ambrozio and R. Montezuma, \textit{On the min-max width of unit volume three-spheres}, to appear in the Journal of Differential Geometry, preprint arXiv:1809.03638v2.
%
\bibitem{Bol} J. Bolton, \textit{Transnormal hypersurfaces}, Proc. Cambridge Philos. Soc. \textbf{74} (1973), 43-48.
%
\bibitem{Bos} W. Bos, \textit{Kritische Sehnen auf Riemannschen Elementarraumstücken}, Math. Ann. \textbf{151} (1963), 431-451.
%
\bibitem{Cro} C. Croke, \textit{Area and length of the shortest closed geodesic}, J. Differential Geometry \textbf{27} (1988), 1-21.
%
\bibitem{Dek} B. Dekster, \textit{Width and breadth for convex bodies in Riemannian manifolds}, Arch. Math. \textbf{58} (1992), 190-198.
%
\bibitem{Don} S. Donato, \textit{The first p-widths of the unit disk.} J. Geom. Anal. 32 (2022), no. 6, Paper No. 177, 38 pp.
%
\bibitem{DonMon} S. Donato and R. Montezuma, \textit{The first width of non-negatively curved surfaces with convex boundary}, preprint arXiv:2211.14815.
%
\bibitem{Fil} J. P. Fillmore, \textit{Symmetries of surfaces of constant width}, J. Differential Geometry \textbf{3} (1969), 103-110. 
%
\bibitem{Fra} T. Frankel, \textit{On the fundamental group of a compact minimal submanifold}, Ann. of Math. 83 (1966), 68-73.
%
\bibitem{FraLi} A. Fraser and M. Li, \textit{Compactness of the space of embedded minimal surfaces with free boundary in three-manifolds with nonnegative Ricci curvature and convex boundary}, J. Differential Geom. \textbf{96} (2014), no.2, 183-200.
%
\bibitem{GirPol} A. Girouard and I. Polterovich, \textit{Spectral geometry of the Steklov problem (survey article)}, J. Spectr. Theory 7 (2017), no. 2, 321-359. 
%
\bibitem{GluZil} H. Gluck and W. Ziller, \textit{Existence of periodic motions of conservative systems}. Seminar on minimal submanifolds, 65-98, Ann. of Math. Stud., 103, Princeton Univ. Press, Princeton, NJ, 1983.
%
\bibitem{Gro} M. Gromov, \textit{Curvature, diameter and Betti numbers}, Comment. Math. Helv. \textbf{56} (1981), no. 2, 179-195. 
%
\bibitem{Gro1} M. Gromov, \textit{Filling Riemannian manifolds}, J. Differential Geom. \textbf{18} (1983), no. 1, 1-147. 
%
\bibitem{GroShi} K. Grove and K. Shiohama, \textit{A generalized sphere theorem}, Ann. of Math. (2) \textbf{106} (1977), no. 2, 201-211. 
%
\bibitem{MLi} M. Li, \textit{Chord shortening flow and a theorem of Lusternik and Schnirelmann}, Pacific J. Math. \textbf{299} (2019), no. 2, 469-488.
%
\bibitem{LusSch} L. Lusternik and L. Schnirelmann, \textit{M\'ethodes Topologiques dans les probl\`emes variationnels}, Hermann and Cie, \'editeurs, Paris, 1934.
%
\bibitem{MarNev-Duke} F. C. Marques and A. Neves,
\textit{Rigidity of min-max minimal spheres in three-manifolds.} Duke Math. J. \textbf{161} (2012), no. 14, 2725-2752. 
%
\bibitem{MarNev-index} F. C. Marques and A. Neves,
\textit{Morse index and multiplicity of min-max minimal hypersurfaces.} Camb. J. Math. \textbf{4} (2016), no. 4, 463-511.
%
\bibitem{Mey} W. Meyer, \textit{Toponogov's Theorem and Applications}, Lecture Notes, College on Differential Geometry, Trieste 1989. Digital version publicly available at https://indico.ictp.it/event/a02178/contribution/8/material/0/0.pdf
%
\bibitem{NabRot} A. Nabutovsky and R. Rotman, \textit{Linear bounds for lengths of geodesic loops on Riemannian
2-spheres}, J. Differential Geom. \textbf{89}, no. 2 (2011): 217-32.
%
\bibitem{Mil} J. Milnor, Morse Theory, Annals of Mathematics Studies, No. 51 Princeton University Press, Princeton, N.J. 1963 vi+153 pp. 
%
\bibitem{Nis} S. Nishikawa, \textit{Transnormal hypersurfaces: generalized constant width for Riemannian manifolds}, Tohoku Math. J. (2) \textbf{25} (1973), 451-459. 
%
\bibitem{Nis2} S. Nishikawa. \textit{Compact two-transnormal hypersurfaces in a space of constant curvature}, J. Math. Soc. Japan \textbf{26} (1974), 625-635.
%
\bibitem{Rob} S. A. Robertson, \textit{Generalised constant width for manifolds}, Michigan Math. J. \textbf{11} (1964), 97-105. 
%
\bibitem{San} Luis A. Santal\'o, \textit{Integral Geometry and Geometric Probability}, Addison-Wesley, Reading, MA, 1976.
%
\bibitem{Weg1} B. Wegner, \textit{Globale Sätze über Raumkurven konstanter Breite I}, Math. Nachr. \textbf{53} (1972), 337-344.
%
\bibitem{Weg2} B. Wegner, \textit{Globale Sätze über Raumkurven konstanter Breite II}, Math. Nachr. \textbf{67} (1975), 213-223.
%
\bibitem{Wei} A. Weinstein., \textit{Periodic orbits for convex Hamiltonian systems.} Ann. of Math. (2) \textbf{108} (1978), no. 3, 507-518.
%
\bibitem{Zho} X. Zhou, \textit{On the free boundary min-max geodesics}, Int. Math. Res. Not. IMRN 2016, no. 5, 1447-1466. 
\end{thebibliography}
\end{document}